\newcommand{\colim}{\operatorname{colim}}   
\newtheorem{thm}{Theorem}[section]
\newtheorem{prop}{Proposition}[section]
\newtheorem{cor}[thm]{Corollary}
\newtheorem{lem}[thm]{Lemma}
\theoremstyle{definition} 
\newtheorem{defn}{Definition}[section]
\theoremstyle{remark} 
\newtheorem{rem}{Remark}
\theoremstyle{remark} 
\newtheorem{exmp}{Example}
\begin{document}

\title{\textbf{\large{Bar-Natan theory and tunneling between incompressible surfaces in $3$-manifolds}}}

\maketitle

\makeatletter 
\makeatother

\centerline{\author{Uwe Kaiser, \textit{Boise State University}, ukaiser@boisestate.edu}}

\begin{abstract}
In \cite{K1} the author defined for each (commutative) Frobenius algebra a skein module of surfaces in a $3$-manifold $M$ bounding a closed $1$-manifold $\alpha \subset \partial M$. The surface components are colored by elements of the Frobenius algebra. The modules are called the Bar-Natan modules of $(M,\alpha )$. In this article we show that Bar-Natan modules are colimit modules of functors associated to Frobenius algebras, \textit{decoupling} topology from algebra. The functors are defined on a category of $3$-dimensional compression bordisms embedded in cylinders over $M$ and take values in a linear category defined from the Frobenius algebra. The relation with the $1+1$-dimensional topological quantum field theory functor associated to the Frobenius algebra is studied. We show that the geometric content of the skein modules is contained in a \textit{tunneling graph} of $(M,\alpha )$, providing a natural presentation of the Bar-Natan module by application of the functor defined from the algebra. Such presentations have essentially been stated in \cite{K1} and \cite{AF} using ad-hoc arguments. But they appear naturally on the background of the Bar-Natan functor and associated categorical considerations. We discuss in general presentations of colimit modules for functors into module categories in terms of minimal terminal sets of objects of the category in the categorical setting. We also introduce a $2$-category version of the Bar-Natan functor, thereby in some way \textit{categorify} Bar-Natan modules of $(M,\alpha )$. 
\end{abstract}

\newpage

\tableofcontents

\section{Definitions and statement of some results}\label{sec=defs}

A Frobenius algebra $V$ over a commutative ring $R$ is a unital commutative algebra with a cocommutative comultiplication $\Delta : V\rightarrow V\otimes V$ and counit $\epsilon : V\rightarrow R$. In section \ref{sec=algebra} we will show how maps $K\rightarrow J$ of finite sets naturally define $V^{\otimes K}$-modules $V^{\otimes J}$. Throughout we use the categorical definition of tensor products as defined in \cite{DM}, Notes on Supersymmetry 1.2. A prototype example of a Frobenius algebra is $V=\mathbb{Z}[x]/(x^2)$, see \cite{Kh3} and example \ref{example=Khovanov} below. 

For each topological space $X$ we let $\underline{X}$ denotes the set of components.
Throughout we work in the smooth manifold category. Let $M$ be a compact connected orientable $3$-manifold and $\alpha \subset \partial M$ be a closed $1$-manifold. Each properly embedded surface $S\subset M$ bounding $\alpha $ induces a map $\underline{\alpha }\rightarrow \underline{S}$. Here a \textit{surface} is an orientable (if not stated otherwise) $2$-manifold, not necessarily connected.

\begin{defn}[Bar-Natan \cite{BN}, Asaeda-Frohman \cite{AF}, K. \cite{K1}] 
The \textit{Bar-Natan module} $\mathcal{W}(M,\alpha ;V)$ is the free $R$-module generated by isotopy classes relative to the boundary of surfaces properly embedded in $M$ bounding $\alpha $, with the components of the surfaces colored by elements of $V$, quotiented by the submodule generated by \textit{Bar-Natan relations}. In the figures below, the component colored $y\in V$ represents possibly additional closed or bounded components (could also be present additionally in relation 3.). Our figures are local as usual in skein theory, showing relations inside a $3$-ball in $M$ with the exterior of the $3$-ball and surfaces contained in the exterior fixed. Here are the \textit{Bar-Natan relations}:
\begin{enumerate} 
\item multilinearity in the colors of the components ($r_i\in R, x_i\in V$ for all $i$):
\vskip 0.2in
\[ \xy 
(-20,-5)*{\textcolor{red}{y}};
(-10,-5)*\ellipse(5,2)__,=:a(-180){-};
(-10,-5)*\ellipse(5,2){.};
(0,-5)*\ellipse(8,5){.}; 
(0,-5)*\ellipse(8,5)__,=:a(-180){-}; 
(0,5)*\ellipse(8,5){-}; 
 (-25,-10)*{}="TL"; 
(-15,-10)*{}="TR"; 
"TL";"TR" **\crv{(-25,0) & (-15,0) }; 
(0,0)*{\textcolor{red}{\sum r_ix_i}};
(-8,10)*{}="TL"; 
(8,10)*{}="TR"; 
(-8,-10)*{}="BL"; 
(8,-10)*{}="BR"; 
"TL"; "BL" **\dir{-}; 
"TR"; "BR" **\dir{-}; 
\endxy \
\qquad = \textcolor{red}{\sum \ r_i}\quad
\xy
(-20,-5)*{\textcolor{red}{y}};
(0,0)*{\textcolor{red}{x_i}};
(-10,-5)*\ellipse(5,2){.}; 
(-10,-5)*\ellipse(5,2)__,=:a(-180){-};
(0,-5)*\ellipse(8,5){.}; 
(0,-5)*\ellipse(8,5)__,=:a(-180){-}; 
(0,5)*\ellipse(8,5){-};  
(-25,-10)*{}="TL"; 
(-15,-10)*{}="TR"; 
"TL";"TR" **\crv{(-25,0) & (-15,0) }; 
(-8,10)*{}="TL"; 
(8,10)*{}="TR"; 
(-8,-10)*{}="BL"; 
(8,-10)*{}="BR"; 
"TL"; "BL" **\dir{-}; 
"TR"; "BR" **\dir{-}; 
\endxy \
\]
\vskip 0.2in
\item sphere relations ($x\in V$):
\vskip 0.2in
\[ \xy 
(-20,-5)*{\textcolor{red}{y}};
(-10,-5)*\ellipse(5,2){.};
(-10,-5)*\ellipse(5,2)__,=:a(-180){-};
(0,0)*\ellipse(6,6){-}; 
(0,0)*\ellipse(6,2)__,=:a(-180){-}; 
(0,0)*\ellipse(6,2){.}; 
 (-25,-10)*{}="TL"; 
(-15,-10)*{}="TR"; 
"TL";"TR" **\crv{(-25,0) & (-15,0) }; 
(0,0)*{\textcolor{red}{x}};
\endxy \
\qquad = \textcolor{red}{\epsilon (x)} \qquad
\xy
(-20,-5)*{\textcolor{red}{y}};
(-10,-5)*\ellipse(5,2)__,=:a(-180){-};
(-10,-5)*\ellipse(5,2){.};
 (-25,-10)*{}="TL"; 
(-15,-10)*{}="TR"; 
"TL";"TR" **\crv{(-25,0) & (-15,0) }; 

\endxy \
\]
\vskip 0.2in
\item neck-cutting relations ($\Delta (x)=x'\otimes x''$ (Sweedler's notation), $x,x',x''\in V$):
\vskip 0.2in
\[ \xy 
(0,0)*{\textcolor{red}{x}};
(0,5)*\ellipse(5,2){-}; 
(0,-5)*\ellipse(5,2){.}; 
(0,-5)*\ellipse(5,2)__,=:a(-180){-};
(-5,10)*{}="TL"; 
(5,10)*{}="TR"; 
(-5,-10)*{}="BL"; 
(5,-10)*{}="BR"; 
"TL"; "BL" **\dir{-}; 
"TR"; "BR" **\dir{-}; 
\endxy
\qquad = \qquad
\xy 
(0,-6)*\ellipse(5,2){.}; 
(0,-6)*\ellipse(5,2)__,=:a(-180){-}; 
(-5,-12)*{}="TL"; 
(5,-12)*{}="TR"; 
"TL";"TR" **\crv{(-5,0) & (5,0) }; 
(0,6)*\ellipse(5,2){-}; 
(-5,12)*{}="TL"; 
(5,12)*{}="TR"; 
"TL";"TR" **\crv{(-5,0) & (5,0)};
(0,7)*{\textcolor{red}{x'}}; 
(0,-7)*{\textcolor{red}{x''}};
\endxy \
 \] 
 \vskip .2in
\textit{Non-separating} neck cuttings are described by the \textit{handle operator} $\mathfrak{k}: V\rightarrow V$, which is defined by changing the color of the resulting component by multiplication in $V$ by $m \Delta (1)\in V$ where $m$ is the multiplication $V\otimes V\rightarrow V$. 
\end{enumerate}
\end{defn}

It follows that if a surface $S$ bounding $\alpha $ in $M$ contains a component colored $0$ then the element is trivial in the skein module. Components without a color will be assumed carrying the color $1\in V$. We sometimes simplify notation $\mathcal{W}(M,\alpha ;V)=:\mathcal{W}(M,\alpha )$ if $V$ is fixed.

\begin{prop} The Bar-Natan modules are naturally $V^{\otimes \underline{\alpha }}$-modules with module structure defined by multiplying Frobenius algebra elements into boundary components.
\end{prop}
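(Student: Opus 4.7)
The plan is to define the action on generators and then verify it descends through isotopy, the three Bar-Natan relations, and the module axioms; the only nontrivial check is compatibility with neck-cutting, which should reduce to the Frobenius compatibility.

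First I would fix notation. Given a properly embedded colored surface $S \subset M$ bounding $\alpha$, the inclusion $\alpha \hookrightarrow S$ produces the map $\underline{\alpha}\rightarrow \underline{S}$ mentioned in the introduction. For each $c \in \underline{\alpha}$ write $S_c$ for the component of $S$ containing $c$, and for $v \in V$ define $v \cdot_c S$ to be the same colored surface with the color of $S_c$ replaced by $v$ times its original color (colors of all other components unchanged). Extending to $V^{\otimes \underline{\alpha}}$ by $(v_c)_{c\in\underline{\alpha}} \cdot S := \prod_c v_c\cdot_c S$ makes sense because $V$ is commutative, so actions at distinct components commute, and two indices mapping to the same component just multiply into that component. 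This descends along isotopy rel boundary since isotopies preserve the map $\underline{\alpha}\rightarrow \underline{S}$ and the color data.

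Next I would check that the action preserves the submodule of Bar-Natan relations. Multilinearity (relation 1) is immediate from linearity of multiplication in $V$. For the sphere relation (relation 2), no boundary component of $\alpha$ can lie on a closed sphere component, so the sphere being collapsed is disjoint from each $S_c$; the action on the remaining components commutes with applying $\epsilon$ to the spherical component. For the non-separating case of neck-cutting, the handle operator $\mathfrak{k}$ is multiplication by the fixed element $m\Delta(1) \in V$, which commutes with multiplication by $v$ since $V$ is commutative.

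The main obstacle, and essentially the content of the proposition, is the separating case of relation 3. Here a component $T$ of $S$ colored $x$ is compressed along a disk into two components $A, B$ carrying $x'\otimes x''=\Delta(x)$ (summation suppressed). If $c\in\underline{\alpha}$ has $S_c \neq T$, both orders trivially agree. If $S_c = T$, then after compression $c$ lies on exactly one of $A,B$, say $A$. Computing both orders gives
\[
v\cdot_c(\text{compressed } S) = (vx')\otimes x'',\qquad
\text{compressed } (v\cdot_c S) = \Delta(vx),
\]
where the right-hand side is read as the coloring of $A,B$. These agree precisely because $\Delta$ is a map of $V$-modules, equivalently the Frobenius identity $\Delta(vx) = (v\otimes 1)\Delta(x) = (1\otimes v)\Delta(x)$; cocommutativity handles the ambiguity of which of $A,B$ inherits the $v$. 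Once well-definedness is secured, the module axioms ($1\cdot S = S$, associativity, and commutation of actions at different tensor factors) follow directly from the corresponding identities in $V$. This establishes naturality of the $V^{\otimes \underline{\alpha}}$-structure with respect to boundary-preserving operations.
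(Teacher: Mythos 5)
Your proof is correct, and the key step you isolate is the right one: compatibility of the boundary action with a separating neck-cutting is exactly the statement that $\Delta$ is a map of $V$-bimodules, $\Delta(vx)=(v\otimes 1)\Delta(x)=(1\otimes v)\Delta(x)$, while the non-separating case only needs that $\mathfrak{k}$ commutes with multiplication; the sphere and multilinearity checks are as trivial as you say. The paper, however, does not argue at the skein-module level where the proposition is stated. Instead the module structure is extracted from the categorical machinery: in Section \ref{sec=algebra} the category $\mathcal{V}(\underline{\alpha})$ of $V^{\otimes \underline{\alpha}}$-modules $V(\varphi)$ is set up, and the $V^{\otimes J}$-linearity of the elementary morphisms (i)--(iv) modeling the Bar-Natan relations is verified there --- the type (i) computation $\Delta((v_1\cdots v_s)w)=v_1\cdots v_{s'}w^{(1)}\otimes v_{s'+1}\cdots v_s w^{(2)}$ is literally your bimodule identity, and type (ii) is your commutation of $\mathfrak{k}$ with multiplication; then Theorem \ref{thm=main} identifies $\mathcal{W}(M,\alpha;V)$ with the colimit of the Bar-Natan functor in $\overline{\mathcal{V}(\alpha)}$, so the $V^{\otimes \underline{\alpha}}$-module structure (including the fact that the relation submodule is a $V^{\otimes\underline{\alpha}}$-submodule) comes for free from the colimit being taken in a category of $V^{\otimes\underline{\alpha}}$-modules. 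Your route is more elementary and self-contained, proving the proposition immediately from the definition of the skein module; the paper's route performs the same algebraic checks once, but packaged so that they simultaneously serve the functor/colimit theorem, the glueing result, and the bicategory statements later on. One small remark: cocommutativity is what makes the neck-cutting relation itself independent of which new component receives $x'$ versus $x''$; for compatibility with the action the two-sided bimodule identity already suffices, so your appeal to cocommutativity there is harmless but not strictly needed.
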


\begin{exmp} 
\noindent (a) The figure shows a surface with a single component colored by $x\in V$ and the multiplication by $x_1\otimes \cdots \otimes x_5\in V^{\otimes 5}$ in $V$: ($x_1\otimes \cdots \otimes x_5)\cdot x=x_1\cdots x_5\cdot x$.
\vskip .2in
\[ \xy 
(0,0)*{\textcolor{red}{x}};
(-12,-19)*{\textcolor{red}{x_1}};
(0,-19)*{\textcolor{red}{x_2}};
(12,-19)*{\textcolor{red}{x_3}};
(7,15)*{\textcolor{red}{x_4}};
(-7,15)*{\textcolor{red}{x_5}};
(-6,-8)*\ellipse(3,1){.}; 
(6,-8)*\ellipse(3,1){.}; 
(0,-8)*\ellipse(3,1){.}; 
(-6,-8)*\ellipse(3,1)__,=:a(-180){-}; 
(6,-8)*\ellipse(3,1)__,=:a(-180){-}; 
(0,-8)*\ellipse(3,1)__,=:a(180){-}; 
(-3,6)*\ellipse(3,1){-}; 
(3,6)*\ellipse(3,1){-}; 
(-3,12)*{}="1"; 
(3,12)*{}="2"; 
(-9,12)*{}="A2"; 
(9,12)*{}="B2"; 
"1";"2" **\crv{(-3,7) & (3,7)}; 
(-3,0)*{}="A"; 
(3,0)*{}="B"; 
(-3,1)*{}="A1"; 
(3,1)*{}="B1"; 
"A";"A1" **\dir{-}; 
"B";"B1" **\dir{-}; 
"B2";"B1" **\crv{(8,7) & (3,5)}; 
"A2";"A1" **\crv{(-8,7) & (-3,5)}; 
(3,-16)*{}="1"; 
(9,-16)*{}="2"; 
"1";"2" **\crv{(3,-10) & (9,-10)}; 
(-3,-16)*{}="1"; 
(-9,-16)*{}="2"; 
"1";"2" **\crv{(-3,-10) & (-9,-10)}; 
(-15,-16)*{}="A2"; 
(15,-16)*{}="B2"; 
(-3,0)*{}="A"; 
(3,0)*{}="B"; 
(-3,-1)*{}="A1"; 
(3,-1)*{}="B1"; 
"A";"A1" **\dir{-}; 
"B";"B1" **\dir{-}; 
"B2";"B1" **\crv{(13,-6) & (2,-8)}; 
"A2";"A1" **\crv{(-13,-6) & (-2,-8)}; 
\endxy \] 
\vskip 0.1in
\noindent (b) Let $B$ the compact $3$-ball. Then 
$$\mathcal{W}(B^3,\alpha ;V)\cong V^{\underline{\alpha }},$$ 
which is the $1$-dimensional $V^{\underline{\alpha }}$-module defined by $\textrm{Id}: \underline{\alpha }\rightarrow \underline{\alpha }$.
\vskip 0.1in
\noindent (c)
If $\Sigma $ is a closed orientable surface of genus $g$ then $\mathcal{W}(\Sigma \times [0,1];\alpha \times \{0\}\cup \beta \times \{1\})$ is a bimodule over algebras $\mathcal{W}(\Sigma \times [0,1];\alpha \times \{0,1\})$ and 
$\mathcal{W}(\Sigma \times [0,1];\beta \times \{0,1\})$.
\end{exmp}

In order to state our first main result we recall a definition from category theory \cite{M}, IX. For further definitions and background in category theory see \cite{M} and for higher categories \cite{Bo}.

\begin{defn}
Let $\mathcal{C}$ be a small category with $\mathcal{C}^0$ the set of objects and $\mathcal{C}^1$ the set of morphisms (arrows). Let
$F: \mathcal{C}\rightarrow \textrm{$S$-Mod}$ be a functor, $S$ a unital commutative ring and \textrm{$S$-Mod} the category of $S$-modules. Then 
$$\colim(F):=\bigoplus_{x\in \mathcal{C}^0}F(x)/Rel,$$
where $Rel$ is the $S$-submodule generated by all elements of the form $v-F(m)v$ for all morphisms $(m: x\rightarrow y)\in \mathcal{C}^1$
and $v\in F(x), F(m)v\in F(y)$ defined in the direct sum using the inclusions of corresponding summands.
\end{defn}

\begin{thm}\label{thm=premain}
\noindent \textbf{(i)} For each $(M,\alpha )$ there exists the \textit{compression bordism category}
$\mathcal{C}(M,\alpha )$ with objects: embeddings of surfaces in $M$ bounding $\alpha $ (representing isotopy classes), and morphisms: compression isotopy classes of compression bordisms in $M\times [0,1]$ (with corners,
products over $\partial M$).

\noindent \textbf{(ii)} For each Frobenius algebra $V$ and finite set $J$ there exists a category $\mathcal{V}(J)$ of $V^{\otimes J}$-modules (with morphisms built from $\mathfrak{k}$, $\Delta$, $\epsilon $).

\noindent \textbf{(iii)} There exists the \textit{Bar-Natan functor}
$$F=F_{(M,\alpha )}: \mathcal{C}(M,\alpha )\rightarrow \mathcal{V}(\underline{\alpha })$$
such that if $\hat{F}$ is defined by $F$ composed with the inclusion into a natural completion of $\mathcal{V}(\underline{\alpha })$ in $V^{\underline{\alpha }}-\textrm{Mod}$ then
$$\colim(\hat{F})\cong \mathcal{W}(M,\alpha ;V).$$
\end{thm}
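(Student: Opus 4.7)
\textbf{Building $\mathcal{C}(M,\alpha)$.} For part (i) I would define a compression bordism $W\colon S_0 \to S_1$ as a properly embedded compression body in $M\times[0,1]$ with bottom boundary $S_0\times\{0\}$, top boundary $S_1\times\{1\}$, and vertical boundary $\alpha\times[0,1]$ (with corners along $\alpha\times\{0,1\}$), so that $S_1$ is obtained from $S_0$ by $2$-handle compressions and $3$-handle sphere removals (no $1$-handles, so genus never increases). Composition is vertical stacking followed by rescaling, and the identity on $S$ is $S\times[0,1]$. Morphisms are equivalence classes under ambient isotopy rel $\partial$ together with the usual rearrangements and slides of $2$- and $3$-handles; associativity and unitality then follow routinely.

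\textbf{Defining $\mathcal{V}(\underline{\alpha})$ and $F$.} For (ii), I would take the objects of $\mathcal{V}(J)$ to be the $V^{\otimes J}$-modules $V^{\otimes K}$ associated to set maps $J\to K$ as promised in the algebraic section, with morphisms generated, under composition and tensoring with identities, by the Frobenius operations $\eta,\epsilon,m,\Delta$ and the handle operator $\mathfrak{k}$. For (iii), on an object $S$ set $F(S)=V^{\otimes\underline{S}}$, whose $V^{\otimes\underline{\alpha}}$-module structure comes from the map $\underline{\alpha}\to\underline{S}$ via multiplication into boundary components. On a compression bordism $W$ I fix a handle decomposition and assign: to each separating $2$-handle compression the comultiplication $\Delta$ (splitting the color of the original component between the two new ones); to each non-separating $2$-handle compression the handle operator $\mathfrak{k}$ on the relevant color; and to each $3$-handle sphere removal the counit $\epsilon$ applied to the color of the removed sphere. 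Composition of these elementary operations defines $F(W)$.

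\textbf{Colimit identification.} There is a natural $R$-linear map
\[
\Phi\colon \bigoplus_{S}\hat F(S)\;\longrightarrow\;\mathcal{W}(M,\alpha;V),
\]
sending $\bigotimes_{c\in\underline{S}}x_c$ to the Bar-Natan class of $S$ with components colored accordingly. Multilinearity of $\Phi$ is exactly Bar-Natan relation (1) together with the $V^{\otimes\underline{\alpha}}$-module structure. The generating relations $v-\hat F(\phi)v$ of $\colim(\hat F)$, for morphisms $\phi$ in $\mathcal{C}(M,\alpha)$, all map to zero: each elementary piece of $\phi$ corresponds to a Bar-Natan relation --- sphere removal is (2), a separating saddle is the separating form of (3), and a non-separating saddle is its handle-operator form. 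Hence $\Phi$ descends to $\overline{\Phi}\colon\colim(\hat F)\to \mathcal{W}(M,\alpha;V)$, which is evidently surjective. For injectivity I would construct an inverse on generators by sending a colored surface $S$ to $\bigotimes x_c\in\hat F(S)$ viewed in the colimit, then check each Bar-Natan relation holds there; since every Bar-Natan relation is local in a $3$-ball, it is realized by an elementary compression bordism supported in that same ball, which produces exactly the required colimit identification.

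\textbf{Main obstacle.} The crux is well-definedness of $F$ on morphisms, i.e.\ invariance under the compression-isotopy equivalence independently of the chosen handle decomposition. One must verify that every ambient isotopy rel $\partial$ combined with handle rearrangements and slides translates into the Frobenius identities --- coassociativity and cocommutativity of $\Delta$, compatibility of $\mathfrak{k}$ with $\Delta$, and counitality of $\epsilon$ --- and nothing more. This is the analogue of the $(1{+}1)$-dimensional TQFT verification carried out in the compression-bordism setting inside $M\times[0,1]$; the exclusion of $1$-handles is precisely what prevents the ambient $3$-dimensional geometry from forcing extra relations beyond those in $\mathcal{V}(\underline{\alpha})$. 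The completion $\hat F$ is needed because $F(S)$ may fail to capture the components of $S$ disjoint from $\alpha$ in a finitely generated way, but the constructions of $\Phi$ and its inverse remain compatible after completion.
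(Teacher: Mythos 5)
Your overall strategy is the same as the paper's (assign $\Delta$, $\mathfrak{k}$, $\varepsilon$ to separating compressions, non-separating compressions and sphere removals, then identify the colimit with $\mathcal{W}(M,\alpha;V)$ by a pair of mutually inverse maps), but you have left the decisive step unproved. You correctly flag well-definedness of $F$ on morphisms as ``the main obstacle,'' yet that is precisely where the content of the theorem lies, and your proposal offers only the assertion that isotopies and handle rearrangements ``translate into the Frobenius identities and nothing more.'' The paper closes this gap by a concrete mechanism: $\mathcal{C}(M,\alpha)$ is presented as a quotient of the free category $\mathcal{F}\mathfrak{c}$ on a directed graph whose arrows are elementary neck-cuttings, sphere deaths and self-isotopies, and Cerf theory is invoked to show that the \emph{only} relations imposed by Heegaard isotopy are (a) reordering of critical points and (b) birth/death of a canceling $2$--$3$ handle pair (plus rearrangement of isotopies past handles). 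One then checks each of these against coassociativity of $\Delta$, commutation of $\Delta$, $\mathfrak{k}$, $\varepsilon$ acting on distinct tensor factors, and $(\varepsilon\otimes\mathrm{Id})\circ\Delta=\mathrm{Id}$. Without this presentation (or an equivalent enumeration of the moves relating two handle decompositions of the same isotopy class of embedded compression bordism), your functor is only defined on decorated handle decompositions, not on morphisms of $\mathcal{C}(M,\alpha)$, and the colimit identification does not yet make sense.

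A second, related omission: you leave out the self-isotopy (loop) morphisms of $\mathcal{C}(M,\alpha)$ and, correspondingly, the permutation morphisms (the paper's type (iv)) in $\mathcal{V}(J)$; your generating set $\eta,\epsilon,m,\Delta,\mathfrak{k}$ does not contain them, and in fact $m,\eta$ do not belong to the paper's $\mathcal{V}(J)$ at all (they correspond to the excluded $0$- and $1$-handles; their inclusion is harmless for the colimit but deviates from statement (ii)). The loop morphisms matter in your last step: the inverse map $\mathcal{W}(M,\alpha;V)\to\colim(\hat F)$ requires choosing an ordering of the components of a representative surface, and two orderings related by an ambient self-isotopy permuting closed components must give the same element of the colimit; this is exactly what the loop morphisms, sent by $F$ to permutation isomorphisms, guarantee. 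As written, your inverse is not visibly well defined on isotopy classes of colored surfaces. Finally, note that the paper passes to the skeleton category (representative embeddings with chosen isotopies to standard representatives) so that the colimit is taken over a small category indexed by isotopy classes; your proposal quietly sums over ``all $S$'' without addressing this.
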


The compression category will be defined in section \ref{sec=compression}, the category $\mathcal{V}(\underline{\alpha })$ in section \ref{sec=algebra}, and the functor and its completion in section \ref{sec=functor}. 

We will show that for the results of theorem \ref{thm=premain} there exists
\begin{itemize}
\item a \textit{bicategory} version, see section \ref{sec=bicategory}, 
\item \textit{unorientable} and \textit{oriented} versions (see section \ref{sec=functor}). If not indicated otherwise we work in the orientable setting. But the theory we develop carries over to both an unorientable and oriented setting (The oriented setting will 
begin with pairs $(M,\alpha )$ for $M$ an oriented $3$-manifold and $\alpha $ an oriented $1$-manifold embedded in $\partial M$),
\item an \textit{abstract}, i.\ e.\ non-embedded versions for each non-negative integer $k$, see section \ref{sec=functor}, only
depending on the number of components of bounding curves,
\item natural morphisms and functors relating these structures. See \cite{Fa} for even further variations of Bar-Natan modules. 
\end{itemize}

\vskip 0.1in

\textit{Abstract Bar-Natan} theory has been introduced by Bar-Natan in his geometric construction of Khovanov homology in \cite{BN}. The 
embedded Bar-Natan theory as discussed here maybe contributes to the understanding of extended $1-2-3$ TQFT (topological quantum field theory) and the corresponding TQFT with embeddings in dimensions $2-3-4$.

In the second part of the article we show that \textit{embedded} Bar-Natan theory captures the tunneling between incompressible surfaces in $3$-manifolds. 

Let $\mathcal{C}$ be a small category (i.\ e.\ the class of objects is a set)

\begin{defn} A subset $\mathcal{T}\subset \mathcal{C}^0$ is \textit{terminal} if for each $x\in \mathcal{C}^0$ there exists a $t\in \mathcal{T}$ and a morphism $x\rightarrow t$ in $\mathcal{C}^1$. A terminal
set $\mathcal{T}$ is 
\textit{minimal} if $\mathcal{T}\setminus \{t\}$ is not terminal for each $t\in \mathcal{T}$. 
\end{defn}

On the topological side we will need the following definitions using classical $3$-manifold concepts:  

\begin{defn} 
\noindent \textbf{(i)} A proper surface $S$ in $M$ bounding $\alpha $ is called \textit{incompressible} if (1) each $2$-sphere component bounds a $3$-ball, and (2) each loop on $S$, which bounds a disk in $M$ already bounds a disk on $S$, see \cite{He} but note that our surfaces can be disconnected.

\noindent \textbf{(ii)} $(M,\alpha )$ is \textit{Haken-reducible} if there exists an incompressible surface $F$ in $M$ bounding $\alpha $, which contains a $2$-sphere component $S$ that does not bound a $3$-ball and is not parallel to any component of $F\setminus S$, and $F\setminus S\neq \emptyset$.
Otherwise $(M,\alpha )$ is called \textit{Haken-irreducible}. 
\end{defn}

If $M$ is irreducible then $(M,\alpha )$ is Haken-irreducible for each $\alpha $.

\begin{exmp}
$(S^2\times S^1,\emptyset )$ and $(S^2\times [0,1],\emptyset )$ are Haken-irreducible but not irreducible. 
\end{exmp}

\begin{prop} If $(M,\alpha )$ is Haken-irreducible then the set of isotopy classes of incompressible surfaces in $M$ bounding $\alpha $ is minimal.
\end{prop}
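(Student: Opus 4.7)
Minimality of $\mathcal{I}$ unfolds into two assertions: (a) $\mathcal{I}$ is terminal in $\mathcal{C}(M,\alpha)$, and (b) for each $F \in \mathcal{I}$, $F$ itself admits no morphism of $\mathcal{C}(M,\alpha)$ to any $F' \in \mathcal{I}\setminus\{F\}$, so that $F$ witnesses non-terminality of $\mathcal{I}\setminus \{F\}$. For (a), I would run the classical compression argument of Hempel \cite{He}: starting from any surface bounding $\alpha$, while condition (2) of incompressibility fails, choose a compressing disk $D$ and apply the elementary compression along it, which is a generator of $\mathcal{C}(M,\alpha)$. A lexicographic complexity such as $(-\chi(F), \text{number of non-ball-bounding sphere components})$ strictly decreases at every step, so after finitely many iterations we land in $\mathcal{I}$.

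The heart of (b) is the \emph{key claim}: a compression bordism $W: F \to F'$ with both endpoints in $\mathcal{I}$ satisfies $F \simeq F'$ rel $\alpha$. I would decompose $W$ into isotopies and elementary compressions along embedded disks. Starting from an incompressible surface, any compressing disk $D$ has $\partial D$ bounding a disk $D_0 \subset F$ by condition (2), so the compression is trivial in the following sense: it replaces $F$ by $F \sqcup \Sigma$, where $\Sigma = D \cup D_0$ is a $2$-sphere. For $F'$ to belong to $\mathcal{I}$, every such $\Sigma$ generated along $W$ must bound a $3$-ball in $M$; Haken-irreducibility of $(M,\alpha)$ is then exactly the hypothesis forcing each such $\Sigma$ to be parallel to (or isotopable into a ball disjoint from) a pre-existing component, so it can be pushed back and merged with the rest. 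The cumulative effect of $W$ reduces to an ambient isotopy, whence $F \simeq F'$.

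The main obstacle is making this absorption step rigorous: one must track, through a compression bordism that alternates compressions and isotopies (and possibly anti-compression or tube attachment moves, if allowed by the definition of $\mathcal{C}(M,\alpha)$ in Section \ref{sec=compression}), that no inessential sphere generated at an intermediate stage survives as a distinct component of the incompressible $F'$. This is where the Haken-irreducibility hypothesis enters directly and essentially: without it, one could enlarge any $F \in \mathcal{I}$ by a disjoint ball-bounding sphere to obtain a distinct $F' \in \mathcal{I}$, and a compression bordism between the two would obstruct minimality.
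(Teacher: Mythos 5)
Your reduction of minimality to the key claim (no morphism of $\mathcal{C}(M,\alpha)$ joins two distinct incompressible surfaces) is the same reduction the paper makes, and your starting observation --- incompressibility forces every compressing curve to bound a disk on the surface, so an elementary compression only splits off a sphere $\Sigma = D\cup D_0$ --- is also the paper's. The gap is in how you then use Haken-irreducibility. You assert that for the target to be incompressible every split-off $\Sigma$ must bound a $3$-ball, and you invoke Haken-irreducibility to absorb such trivial spheres. This has the hypothesis doing the wrong job. In the definition the paper actually works with (restated at the beginning of the section on presenting Bar-Natan modules, and implicit in the very definition of Haken-reducibility, despite the wording of condition (1) in the introduction), an incompressible surface has \emph{no} sphere component bounding a ball, but it may perfectly well have \emph{essential} sphere components. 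So the dangerous case is the opposite of the one you treat: if the split-off sphere $\Sigma$ does \emph{not} bound a ball, the target --- a copy of $F$ together with $\Sigma$, possibly parallel to a component of $F$ --- can itself be incompressible and lies in a different isotopy class, and your ``push back and merge'' step is unavailable: a sphere parallel to an existing component is a new component, $F\sqcup\Sigma\not\simeq F$. If instead $\Sigma$ (or the complementary piece) does bound a ball, no hypothesis is needed at all: this is a trivial $2$-handle morphism, its target has a trivial sphere component and hence is not incompressible, and such morphisms are harmless (the paper even deletes them from the category, using that they admit left inverses).

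What the paper proves is that under Haken-irreducibility the bad case cannot occur: a nontrivial compression of an incompressible $F$ produces two pieces neither of which is a ball-bounding sphere; if the two pieces were parallel spheres, the original component would have bounded a ball, contradicting incompressibility of $F$; otherwise the target would be an incompressible surface containing an essential sphere component not parallel to the rest, i.e.\ a witness of Haken-reducibility. Hence there are no nontrivial arrows out of an incompressible surface at all, so no morphisms between distinct elements of the terminal set, which is minimality. Your sketch is missing exactly this dichotomy (trivial versus essential split-off sphere, with Haken-irreducibility excluding the latter configuration), and the absorption argument does not replace it. Two smaller points: in your terminality step the complexity is off --- what must be removed are the \emph{ball-bounding} sphere components, via the $3$-handle (capping) morphisms you omit, while non-ball-bounding spheres cannot be removed and are permitted in incompressible surfaces; and your closing remark, that without the hypothesis one could enlarge $F$ by a ball-bounding sphere to get a new element of $\mathcal{I}$, rests on the same inverted reading --- the enlargement relevant to the converse direction is by an essential sphere.
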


Let $\mathcal{T}\subset \mathcal{C}^0(M,\alpha )$ be a minimal set
of isotopy classes of incompressible surfaces with some specific ordering $\preceq $, see \ref{sec=presentations}. (We ignore the possible \textit{loop morphism} for the corresponding incompressible surfaces which map to \textit{permutation isomorphisms} by the functor.) 

Let 
$$S\leftarrow S''\rightarrow S'$$
be a diagram in $\mathcal{C}(M,\alpha )$ with $S,S'\in \mathcal{T}$.
The traces of the morphisms in $M\times I$ determine a $3$-manifold $W$ together with a description of its handle structure by complete \textit{compressions} from $S''$ to both $S$ and $S'$. The decomposition of $W$ into components will decompose $S$ and $S'$ into corresponding \textit{parts}.

Then the relation for $\mathcal{W}(M,\alpha )$ corresponding to $S\leftarrow S''\rightarrow S'$ is determined by 
$$F(S)\leftarrow F(S'')\rightarrow F(S')$$
This will be read off the genera of certain \textit{parts} of $S,S'$ and the number of additional \textit{stabilization} handles (additional
$1$-handles for $S''$ necessary for embedding), which correspond to certain non-separating neck-cuttings, 
given by the action of the handle operator $\mathfrak{k}$ under $F$. 

A \textit{decomposition into parts}, and 
corresponding \textit{stabilization numbers} are collected into a vector $(n_1,\ldots ,n_r)$ called the \textit{tunneling invariant} for $S\preceq S'$. We will see that the tunneling invariant completely determines 
$$F(S)\leftarrow F(S'')\rightarrow F(S')$$
and thus a relation for $\mathcal{W}(M,\alpha )$.  
\textit{Reducible} tunneling invariants are tunneling invariants that can be omitted because the corresponding relations are already generated by other tunneling invariants in a \textit{prescribed way}.

Suppose that $(M,\alpha )$ be Haken-irreducible, and let $\mathcal{T}$ be ordered. 

\begin{defn} A \textit{tunneling graph} of $(M,\alpha )$ (with respect to $\preceq$) is a labeled oriented graph. The vertices correspond to isotopy classes of incompressible surfaces in $(M,\alpha )$ labeled by the genera of the components. The edges from $S$ to $S'$ correspond to \textit{irreducible} tunneling invariants from $S$ to $S'\preceq S$ as defined in section \ref{sec=tunneling}.
\end{defn}

\begin{thm}\label{thm=tunneling} The tunneling graph of $(M,\alpha )$ determines a natural presentation of the Bar-Natan module $\mathcal{W}(M,\alpha )$. 
\end{thm}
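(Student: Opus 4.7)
The plan is to combine Theorem \ref{thm=premain}(iii), which already gives the isomorphism $\mathcal{W}(M,\alpha;V)\cong \colim(\hat{F})$, with two reductions: a general categorical one that cuts the colimit down to a terminal subset of objects, and a geometric one that reduces the morphisms to be considered to the combinatorial data recorded by the tunneling graph. Concretely, I would first prove a lemma in the abstract setting of a functor $F\colon\mathcal{C}\to S\textrm{-Mod}$: if $\mathcal{T}\subset\mathcal{C}^0$ is terminal, then the natural map $\bigoplus_{t\in\mathcal{T}}F(t)\to \colim(F)$ is surjective, and the kernel is generated by the relations $F(m)v=F(m')v$ coming from spans $t\xleftarrow{m} x\xrightarrow{m'} t'$ with $t,t'\in\mathcal{T}$. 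This is a standard cofinality-type argument for colimits over non-filtered categories, using that in our situation every object $x$ admits a morphism to some $t\in\mathcal{T}$, and that any two morphisms $x\to t$, $x\to t'$ in $\mathcal{C}(M,\alpha)$ produce a span ending in $\mathcal{T}$.

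Next I would specialize to $\mathcal{C}=\mathcal{C}(M,\alpha)$ and $F=\hat{F}$. Under the Haken-irreducibility hypothesis the preceding proposition identifies the isotopy classes of incompressible surfaces as a minimal terminal set $\mathcal{T}$, so $\bigoplus_{S\in\mathcal{T}}\hat{F}(S)$ already surjects onto $\mathcal{W}(M,\alpha)$. The essential content then is to analyze spans $S\xleftarrow{} S''\xrightarrow{} S'$ between elements of $\mathcal{T}$. Here I would invoke the structural discussion preceding the theorem: such a span produces an embedded $3$-manifold $W\subset M\times I$ equipped with two complete compression handle decompositions, which canonically decomposes $S$ and $S'$ into matching parts together with a record of the stabilization $1$-handles. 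A normal-form argument, organized component by component and handle by handle, shows that $\hat F$ applied to the span depends only on this combinatorial data, i.e.\ only on the tunneling invariant $(n_1,\dots,n_r)$ attached to $S\preceq S'$. Concretely, separating neck cuttings contribute factors $\Delta$ and $\epsilon$, genus changes of parts contribute powers of $\mathfrak{k}$, and stabilization handles contribute further applications of $\mathfrak{k}$ exactly as described in the preamble to the theorem.

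With the span reduction in hand, I would then cut down the set of relations. The ordering $\preceq$ on $\mathcal{T}$ lets me orient each relation from the larger to the smaller surface, producing the edges of the tunneling graph. The remaining step is to show that a tunneling invariant that is \emph{reducible} in the sense of section \ref{sec=tunneling} gives a relation already implied by other invariants in the prescribed way; this is a direct algebraic manipulation of the Bar-Natan relations (multilinearity, sphere relation, neck cutting) combined with composability of compression bordisms, showing that a reducible span factors through a chain of spans corresponding to irreducible tunneling invariants. Irreducible invariants are precisely those surviving this reduction, and they are in bijection with the edges of the tunneling graph. Putting the three steps together yields a presentation with generators indexed by vertices (decorated by $V^{\otimes\underline{S}}$ via the functor $\hat F$) and relations indexed by edges, which is the claimed natural presentation.

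The main obstacle, in my view, is the last step: proving that reducible tunneling invariants are genuinely redundant relative to the irreducible ones in a canonical and ordering-compatible way. It requires a careful bookkeeping argument that separates the topological reduction (factoring a compression bordism through intermediate incompressible surfaces while keeping track of stabilizations) from the Frobenius-algebraic bookkeeping (that the induced morphism in $\mathcal{V}(\underline{\alpha})$ factors accordingly), and it is here that the specific definition of \emph{reducible} from section \ref{sec=tunneling} must match the algebra produced by $\hat F$. The other steps are comparatively formal: they are the cofinality lemma for colimits and a span-by-span translation of the geometric data into the morphisms built out of $\mathfrak{k}$, $\Delta$, $\epsilon$.
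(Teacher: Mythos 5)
Your proposal follows essentially the same route as the paper: the colimit description of Theorem \ref{thm=premain}, the terminal-set presentation of colimits from section \ref{sec=presentations} (generators $\bigoplus_{t}\hat F(t)$, relations from spans into the terminal set), minimality of the set of incompressible surfaces under Haken-irreducibility from section \ref{sec=towards}, the computation in section \ref{sec=tunneling} showing that $\hat F$ applied to a span between incompressible surfaces depends only on the partitions, genera and tunneling vector, and the elimination of redundant relations to obtain edges indexed by irreducible tunneling invariants. The one difference is emphasis: the step you single out as the main obstacle is in the paper nearly definitional, since \emph{reducible} is defined as following by iterated pullback from a chain of tunneling invariants, so the reduction is carried by the (easy) pullback principle of section \ref{sec=presentations} together with the observation that deleting a reducible edge does not change reducibility of the remaining edges — modulo the caveat about loop isotopies acting by permutation isomorphisms, which the paper explicitly sets aside.
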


It seems to be a difficult problem to explicitly describe tunneling graphs and their properties.

\vskip 0.1in

Here is the outline of the article:
In section \ref{sec=compression} we study the category of compression bodies of $3$-manifolds, which is the category on which the Bar-Natan functor is defined. In section \ref{sec=algebra} we define linear categories from Frobenius algebras. We prove a gluing result on the algebraic level, which will be necessary in the proof of the bicategory version \ref{thm=bifunctor} of \ref{thm=main}. In section \ref{sec=functor} we define the Bar-Natan functor. We also discuss the relations with abstract, oriented and unorientable versions. In section \ref{sec=bicategory} we prove the bicategory version of Theorem \ref{thm=premain}. In section \ref{sec=presentations} we
prove a general result about the presentation of colimit modules defined from functors into colimit modules, with generators given by terminal elements and relations defined by a pullback principle, see in particular section \ref{sec=pullback} where the pullback principle is applied to the Bar-Natan functor. In section \ref{sec=to Bar-Natan} we prove theorem \ref{thm=tunneling}. Section \ref{sec=local} uses the categorical ideas to discuss modules defined by restricting the functor. In section \ref{sec=homology} we discuss the special case of the Frobenius manifold defined from the homology of an oriented manifold. 

Categories and skein modules of foams (certain $2$-complexes colored by elements of Frobenius algebras, bounding webs, which are certain graphs in the boundary) have recently been considered in order to categorify quantum invariants of links in cylinders over surfaces, see e.\ g.\ \cite{QW}. See also the $\textrm{sl}(n)$-theory in \cite{MSV}. It could be interesting to extend these theories along the lines of our article. 

A referee for an early version of this article noted in 2013 without giving details that our results give a \textit{categorification} of Bar-Natan modules. Since the statements do not fall into the usual setting of categorification I just have been guessing what the referee might have thought. Theorem \ref{thm=premain} above provides a \textit{presentation} as a colimit of a functor, separating topology and algebra. But the appearance of categories by itself cannot be considered a categorification. It is the bicategory version, categorically lifting this presentation, which provides a type of categorification. It contains the original module in the form of the functor on a category but embeds it into a higher category. Thus we consider Theorem \ref{thm=bifunctor} as the categorification result. The results of this article are about 10 years old but at the time there did not seem to be of much interest in our results. The recent article by Hogencamp, Rose and Wedrich \cite{HRW} is related to Bar-Natan theory, so maybe our results here give some insights.

\vskip 0.1in

I would like to than Charlie Frohman for his continuing interest in my work on Bar-Natan modules, and Ralph Mueger for some helpful communication concerning the categorical part of the article.

\section{The category of compression bordisms}\label{sec=compression}

We identify a manifold $Y$ with $Y\times \{y\}\subset Y\times [a,b]$ for real numbers $a<b$. We identify $Y\times [a,b]$ with $Y\times [c,d]$ for any real numbers $c<d$ using the natural affine isomorphism 
$[a,b]\cong [c,d]$. We remind that if not stated differently all manifolds will be assumed orientable but we do not fix orientations. In particular bordisms will be orientable manifolds bounding orientable manifolds but we do not fix orientations.

\vskip .05in

For fixed $(M,\alpha )$ given as in section \ref{sec=defs} we define a category $\mathcal{M}_0=\mathcal{M}_0(M,\alpha )$ as follows: The objects are  
properly embedded surfaces $S$ in $M$ with boundary $\alpha $.
The morphisms from $S$ to $S'$ are isotopy classes relative to the boundary of $3$-dimensional manifolds 
(with corners if $\alpha$
is not empty) $W\subset M\times [-1,1]$ properly embedded with $W\cap (M\times \{-1\})=S\times \{-1\}$ and $W\cap (M\times \{1\})=S'\times \{1\}$, which are products along $\partial M\times [-1,1]$. 
We also assume that the bordisms are products near $M\times  \{-1,1\}$. The composition by glueing is well-defined using the natural identifications. But we throughout will restrict to glueings preserving orientability. The identity morphisms are cylinders for $S\neq \emptyset$, and the identity morphism $\emptyset \rightarrow \emptyset$   is the empty $3$-manifold $W=\emptyset$. 

Note that $(W,W \cap (M \times \{-1\}),W\cap (M \times \{1\}))$ is a $3$-manifold triad in the sense of Casson and Gordon \cite{CG}, except that in this article we usually work usually in an orientable only setting, sometimes even allowing unorientable surfaces and $3$-manifolds. In the orientable setting our $3$-manifolds will be assumed orientable. 
The morphisms in $\mathcal{M}_0$ are \textit{embedded} manifold triads. We recall some definitions from \cite{CG}.
A \textit{compression body} is a $3$-manifold triad $(W,\partial_-W,\partial_+W)$, where $W$ is constructed from $\partial_-W\times I$ by
attaching $2$-handles and $3$-handles such that $\partial_+W$ has no $2$-sphere components. If we drop the last assumption we will call
the triad a \textit{compression bordism}.   
Recall that a \textit{Heegaard splitting} of a manifold triad $(M,S,S')$ is a pair of compression bodies 
$W,W'$ such that $S\times \{1\}=\partial_+W$, $S'\times \{1\}=\partial_+W'$ and $M$ results by glueing $W$ and $W'$ along $\partial_-W\cong \partial_-W'$.
If $W,W'$ are compression bordisms (i.\ e.\ two-sphere components are allowed in $\partial_+W$ or $\partial_+W'$) then we still call this a \textit{Heegaard splitting}.
Note that each $3$-manifold triad $(W,S,S')$ has a Heegard splitting in this sense. 

We define the category $\mathcal{C}_0$ with objects as above and morphisms defined by 
compression bordisms $(W,W\cap (M\times \{0\}),W\cap (M\times \{1\}))$ 
embedded in $M\times [0,1]$ with the \textit{compression bordism structure} induced from the embedding. This means more precisely that the 
restriction of the projection $M\times [0,1]\rightarrow [0,1]$ to $W$ is a generic Morse function with only a finite number of critical points of 
index $2$ and $3$ at a finite number of levels $\{t_1,\ldots ,t_r\}$. (Note that the embedding can be isotoped such that the index 
$2$ critical points precede the index $3$-critical points, see \cite{KL}) We consider the embeddings up to isotopy 
\textit{preserving} the $2$-$3$ handle structure. This means that the isotopies of embeddings do not introduce any index $0$ or $1$ critical points.
We call these isotopies \textit{Heegaard isotopies}.
Note that a Heegaard isotopy will induce a regular deformation of generic Morse functions.
It is known from Cerf theory \cite{Ce} that only the following \textit{catastrophes} appear during a generic deformation:
the order of critical points changes, or the deletion or insertion of a canceling $2-3$-handle pair.
Note that changing the order of critical points corresponds to passing through a Morse function with two critical points on the same level.
The composition of two embedded compression bordisms is defined as usual.
Obviously $\mathcal{C}_0$ is \textit{not} a bordism category in the usual sense because a 
morphism $S\rightarrow S'$ does \textit{not} define a morphism $S'\rightarrow S$ by turning it upside down.
We like to think of $\mathcal{C}_0$ as a \textit{directed} version of a bordism category, see \cite{G} for ideas around directed topology.

\begin{exmp} Let $\alpha =\emptyset$ and consider a orientable morphism $W\subset M\times [0,1]$ in the category $\mathcal{C}_0$  
from an orientable surface $S\subset M\times \{0\}$ to $\emptyset\subset M\times \{1\}$. Then $W$ is an embedded compression body, and in fact 
for $S$ connected it is an embedded handlebody. This explains the name Heegaard splitting of a manifold triad. It is  a splitting of a 
morphism in the general $3$-manfold triad category into two morphisms in the compression bordism category. 
\end{exmp}

It will be necessary to replace the above categories by their \textit{skeleton} categories $\mathcal{C}:=\mathcal{C}(M,\alpha )$ respectively
$\mathcal{M}:=\mathcal{M}(M; \alpha )$ for the skeleton categories corresponding to $\mathcal{C}_0$ respectively $\mathcal{M}_0$. See \cite{Ma} p.\ 93 for the general definitions. We first consider $\mathcal{C}$. 
We replace the set of objects by the set of \textit{isotopy classes} of properly embedded surfaces $S\subset M$ with boundary $\alpha $
and choose representative embedded surfaces. Then the morphisms are isotopy classes of compression bordisms embedded in $M\times [0,1]$ relative to the boundaries as above but are equipped with explicit isotopies of $M\times \{0\}$ respectively $M\times \{1\}$ carrying the surfaces in the boundaries to one of the standard representatives of objects.
We need additional morphisms defined by isotopy classes of (ambient) self-isotopies of the standard surface embeddings.
Then $\mathcal{C}(M,\alpha )$ is called the \textit{Bar-Natan category} of $(M,\alpha )$, and we let
$\mathcal{C}(M):=\mathcal{C}(M;\emptyset )$. 

\begin{rem} A bordism $W\subset M\times [0,1]$ can be thought of as a sequence of paths in a space built from the disjoint union of spaces of 
embeddings of surfaces $S\subset M$. The space and paths can be completed by
adding (i) \textit{nodal} surfaces corresponding to index $2$ critical points (a nodal surface is smoothly embedded except for some circle which is contracted to a point in $M$; we assume that there is a distinguished plane in the tangent space of $M$ at the singular point tangential to the two branches of the surface), and (ii) $2$-spheres contracted to points in $M$ corresponding to index $3$ critical points. The nodal surfaces define neck-cuttings from the two natural desingularizations defined by blowing up the singular point to a circle within the given plane,
respectively by doing the usual cut and paste surgery on the circle and replacing the annular neighborhood by two disks. (This process 
is actually only defined up to \textit{local} isotopy but this will not be a problem because we will not explicitly need this construction in the following.)  
The space of surfaces can be thought of as the quotient of the embedding space by the diffeomorphisms of the domains.
We do not discuss the technical details of the topology on this space, and how nodal surfaces are added. This can be done but requires technical work not necessary for the purposes in this paper.
\end{rem}

The mapping space viewpoint suggests a description of the category $\mathcal{C}$ in terms of generating morphisms and relations, i.\ e.\ 
as a quotient of a free category on a directed graph by relations. First define the directed graph $\mathfrak{c}$ with vertex set a set of representative embeddings for each isotopy classes of surfaces $S\subset M$ bounding $\alpha $. 
For each isotopy class of a nodal surface consider the arrow defined as follows: The domain is the isotopy class of surface containg the neighborhood of the critical curve on the surface, and the codomain is the result of replacing the annulus by two disks (the boundary of attaching the $2$-handle). 
More precisely the generating morphisms are defined by those nodal surfaces equipped with fixed isotopies to standard representatives. 
Similarly we define arrows for index $3$ critical points describing the vanishing of an embedded $2$-sphere, equipped with paths to standard representatives. Finally we need arrows corresponding to \textit{isotopy classes of isotopies} of representatives to itself. (These correspond to homotopy classes of loops in the surface space at representative embeddings. Note that by isotopy extension each self-isotopy is induced from an ambient isotopy of $M$ fixing $\alpha $.) The category $\mathcal{C}$ above is a quotient of the free category 
$\mathcal{F}\mathfrak{c}$ on the directed graph $\mathfrak{c}$ by certain relations. By Cerf theory \cite{Ce} these relations are all given by changing the order of 
critical points, and by the cancellation or appearance of canceling $2$/$3$ handle pairs. Note that isotopies can also be rearranged with respect to the 
arrows corresponding to the critical points. This follows from the fact that application of an isotopy followed by a neck-cutting or sphere disappearence 
is equivalent to the same with the ambient isotopy followed afterwards. 

The skeleton category is less natural but technically necessary for our constructions. It is known \cite{M} that it is equivalent to the original category $\mathcal{C}$. In particular the definition of our functors in section \ref{sec=algebra} requires the skeleton category.

\vskip .05in

The category $\mathcal{M}$ is similarly defined by the skeleton category of the category $\mathcal{M}_0$. 
We include all the morphisms \textit{reverse} to those in $\mathcal{C}$, i.\ e.\ morphisms
corresponding to $3$-manifold triads defined by attaching $0$-handles and $1$-handles. Note that there is a natural functor $\mathcal{C}\rightarrow \mathcal{M}$, which is the identity on objects.  
As above we let $\mathcal{M}(M):=\mathcal{M}(M; \emptyset)$.  
The category $\mathcal{M}$ has a similar presentation by a directed graph $\mathfrak{m}$. It is important that now there are additional relations corresponding to the deletion or insertion of 
cancelling $1$/$2$ handle pairs. We have the projection functors $\mathcal{F}\mathfrak{c}\rightarrow \mathcal{C}$ and 
$\mathcal{F}\mathfrak{m}\rightarrow \mathcal{M}$. We call the morphisms of the free categories corresponding to the edges of the graphs
respectively their images in $\mathcal{C}$ respectively $\mathcal{M}$ \textit{elementary morphisms}. Thus an elementary morphism of $\mathcal{C}$ 
corresponds to attaching of a $2$-handle or a $3$-handle or an isotopy. An elementary morphism in $\mathcal{M}$ corresponds to attaching a handle 
or an isotopy. We also introduce the \textit{mirror category} of $\mathcal{C}$ denoted $\mathcal{C}^-$ for which the morphisms $S\rightarrow S'$ are just the 
morphisms $S'\rightarrow S$ in $\mathcal{C}$. This category consists of bordisms from $S$ to $S'$ with only $0$ and $1$-handles. The category 
$\mathcal{C}^-$ is usually just called the opposite category. It is presented as an obvious quotient of a directed graph
$\mathfrak{c}^-$. Note that $\mathfrak{c}\cup \mathfrak{c}^-=\mathfrak{m}$ and $\mathfrak{c}\cap \mathfrak{c}^-$ has the same vertex set as
$\mathfrak{c}$ but the arrows are only those corresponding to isotopies. 

\begin{rem} \label{rem=trivial} The neck-cutting morphisms defined by cutting along circles, which bound disks on the surfaces (trivial neck-cuttings) in $\mathcal{C}$ are morphisms from surfaces $S$ to surfaces $S\cup S_0$, where $S_0$ is a $2$-sphere. We call such a morphism a \textit{trivial $2$-handle morphism} if $S_0$ bounds a $3$-ball in $M$. Then there is morphism back from $S\cup S_0$ to $S$, which attaches the $3$-handle killing the $2$-sphere. Trivial morphisms are the only elementary morphisms in $\mathcal{C}$, which admit left-inverses. 
\end{rem}

It is possible that the category $\mathcal{C}(M,\alpha )$ is the empty category $\bf{0}$. This happens if and only if there is no surface $S\subset M$ bounding $\alpha $. This can be stated homologically as follows. Let $\mathfrak{h}(\alpha )$ denote the \textit{subset} of $H_1(\partial M;\mathbb{Z})$ consisting of all homology classes defined by all possible orientations of the components of $\alpha $. 
Then $\mathcal{C}(M,\alpha )$ is the empty category if and only if the intersection of the image of $\partial : H_2(M,\partial M;\mathbb{Z})\rightarrow H_1(\partial M;\mathbb{Z})$ with $\mathfrak{h}(\alpha )$ is empty. 
On the other hand, the categories of closed surfaces (even in empty manifolds 
$M$) are nonempty because they always have the empty surface vertex.

\vskip .1in

Besides the categories $\mathcal{C}(M,\alpha )$ there exist two natural categories for each non-negative integer $n$: The category $\mathcal{C}[n]$ is defined by non-embedded orientable surfaces with $n$ boundary circles and corresponding bordisms, respectively $\mathcal{U}[n]$ is defined by non-embedded possibly unorientable surfaces with $n$ boundary components and corresponding compression bordisms. Note that we assume that our Bar-Natan relations preserve orientability. 
The boundary circles are fixed, the surfaces are representatives of their diffeomorphism classes
and the morphisms are diffeomorphism classes relative to the boundary of compression $3$-manifolds between those surfaces. We write $\mathcal{C}(\alpha )$ if $\alpha $ is fixed. Note that the categories $\mathcal{C}(\alpha )$
for different $\alpha $ with $|\alpha |=n$ are isomorphic (naturally up to choosing an ordering of the components of $\alpha $), and thus $\mathcal{C}[n]$ actually denotes the \textit{isomorphism class} of the corresponding categories.
Note that a neck-cutting of a non-orientable surface can be a morphism from a non-orientable surface to an orientable surface. There is a natural inclusion functor $\mathcal{C}[n]\rightarrow \mathcal{U}[n]$ for each $n$. Obviously the category
$\mathcal{C}[n]$ is a full subcategory of the category $\mathcal{U}[n]$. 

\vskip .05in

The association $(M,\alpha )\mapsto \mathcal{C}(M,\alpha )$ is functorial in the sense that an embedding $i: (M,\alpha )\hookrightarrow (N, i(\alpha ))$ induces a functor of categories $\mathcal{C}(i): \mathcal{C}(M,\alpha )\rightarrow \mathcal{C}(N,i(\alpha ) )$, with isotopies of embeddings $i$ inducing natural transformations of functors. 
In fact, if $j_t$ is a Heegaard-isotopy of $M\times I$ isotoping a $3$-manifold bordism $W_0\subset M\times I$ to $W_1\subset M\times I$ then 
$(i\times \textrm{Id})\circ j_t$ will be a Heegaard-isotopy  from $(i\times \textrm{Id})(W_0)\subset N\times I$ to $(i\times \textrm{Id})(W_1)\subset N\times I$. Thus $\mathcal{C}(i)$ is well-defined on the level of morphisms.  
In particular diffeomorphisms $d$ of $M$ relative to $\partial M$ induce functors
$$\mathcal{C}(M,\alpha )\rightarrow \mathcal{C}(M;d(\alpha )),$$
which are equivalencies of categories.
So for $M\neq \emptyset$ there exist functors $\mathcal{C}(B)\rightarrow \mathcal{C}(M)$ for the $3$-ball $B$, and in general 
$\mathcal{C}(M,\alpha )\rightarrow \mathcal{C}(\alpha )$. The inclusion of a ball $B\hookrightarrow M$ induces a functor
$$\mathcal{C}(B)\times \mathcal{C}(M,\alpha )\rightarrow \mathcal{C}(M,\alpha ).$$
This functor lifts the module action of the closed Bar-Natan skein module of the $3$-ball on the Bar-Natan skein module of a $3$-manifold, which is just the $R$-module action of the skein module, see section \ref{sec=functor}. 
Let $\mathcal{U}(M,\alpha )$ denote the category defined just as $\mathcal{C}(M,\alpha )$ but including unorientable surfaces.
The functor $\mathcal{U}(B;\alpha )\rightarrow \mathcal{U}(\alpha )$ factors through a functor $\mathcal{U}(B;\alpha )\rightarrow \mathcal{C}(\alpha )$ because closed surfaces embedded in $B$ are orientable.
The functor $\mathcal{U}(B;\alpha )\rightarrow \mathcal{C}(\alpha )$ induces an isomorphism of the corresponding Bar-Natan modules (see section \ref{sec=functor} and \cite{K1}) while the two categories are not equivalent. In fact is is not hard to see that the categories $\mathcal{U}(B;\alpha )$ depend on the embedding $\alpha \subset \partial B\cong S^2$ through possible nesting of the components of $\alpha $, see also \cite{Be} for related observations.

\section{Linear categories from Frobenius algebras}\label{sec=algebra}

The most difficult result in this section is \ref{thm=glueing functor}, which is a statement about \textit{algebraic glueing}. This result will be necessary to prove the functor properties with respect to composition in \ref{thm=main} but in particular for the bicategory Theorem
\ref{thm=bifunctor}. The reader may skip some of the involved algebraic arguments here and first read section \ref{sec=bicategory} for motivation. 

Recall that $R$ is a unital commutative ring. Throughout $\textrm{Id}$ will denote the identity morphism. The tensor products will be over $R$. 
The ordering of factors in tensor products is understood in the \textit{categorical} sense as defined in \cite{DM}.
Thus a tensor product of a family of $R$-modules $\{V_i\}_{i\in I}$ assigns to each linear ordering, i.\ e.\ bijection $\sigma : \{1,2,\ldots ,|I|\}\rightarrow I$ the \textit{usual} ordered tensor product $V_{\sigma(1)}\otimes \ldots \otimes V_{\sigma (|I|)}$. We usually just write $I=\{1,2,\ldots ,|I|\}$ when we have chosen a linear ordering of $I$ and thus identify the elements $i\in I$ with $\sigma ^{-1}(i)\in \{1,2,\ldots ,|I|\}$, and then write
$V_1\otimes \ldots \otimes V_{|I|}$. 
The categorical tensor product we use is defined in the usual category of $R$-modules following 
\cite{DM}. Thus there is the following coherence structure: For each permutation there is defined 
an induced isomorphism. In our case these are the usual permutation isomorphisms of the tensor factors. The morphisms 
in the category are $R$-homorphisms between usually ordered tensor products compatible with the isomorphisms induced by permutations. 
As pointed out in \cite{DM} one can formally define the categorical tensor product as the projective limit of the 
usually ordered tensor products with all possible orderings.
For $V$ a fixed $R$-module and $I$ a finite (unordered) set we will write $V^{\otimes I}$ for the tensor product 
of the family of $\{V_i\}_{i\in I}$ with $V_i=V$ for all $i\in I$. Note that $V^{\otimes \emptyset}=V^{\otimes 0}:=R$.

\vskip .05in

Let $V$ be an $R$-module with coassociative and cocommutative coproduct $\Delta $ with counit $\varepsilon :V\rightarrow R$ satisfying $(\varepsilon \otimes \textrm{Id})\circ \Delta=\textrm{Id}$, equipped with an $R$-endomorphism $\mathfrak{k}: V\rightarrow V$ such that 
$(\textrm{Id}\otimes \mathfrak{k})\circ \Delta=\Delta \circ \mathfrak{k}$. This is called a \textit{weak Frobenius algebra}.
Weak Frobenius algebras usually are defined from commutative Frobenius algebras over $R$ (see below) by forgetting the multiplication.
We will see that the algebraic input of a weak Frobenius algebra suffices 
to define a functor from the topological category $\mathcal{C}$ defined in section \ref{sec=compression} into a category of $R$-modules.
The corresponding colimit defines a Bar-Natan module over $R$ for each weak Frobenius algebra. 
But the glueing result at the end of this section and the $2$-categorical setting of section 4 require the refined module structures based directly on the multiplication of the Frobenius algebra.    

\vskip .05in

Recall that a commutative Frobenius algebra $V$ over the commutative unital ring $R$ is a commutative and associative 
$R$-algebra multiplication $m: V\otimes V\rightarrow V$, unit $\mu :R\rightarrow V$, cocommutative and coassociative comultiplication $\Delta : V\rightarrow V\otimes V$ and counit $\varepsilon : V\rightarrow R$ such that $(\varepsilon \otimes \textrm{Id})\circ \Delta=\textrm{Id}$ and such that $\Delta $ is a bimodule homomorphism. Here $V\otimes V$  is considered a $V$-bimodule using $v \cdot (u\otimes w)=(vu)\otimes w=u\otimes (vw)$.  
Throughout let $\mathfrak{k}:=m\circ \Delta (1)\in V$, 
$1:=\mu (1)$, and $\mathfrak{k}: V\rightarrow V$ also denote the multiplication by $\mathfrak{k}$.
The morphism $\mathfrak{k}$ is also called the \textit{handle operator}. Not surprisingly it will play the most essential role in the study of embedded Bar-Natan theory.
By forgetting the multiplication and unit but keeping the endomorphism $\mathfrak{k}$ each commutative Frobenius algebra defines a weak Frobenius algebra. For $v\in V$ we let $\Delta (v)=v^{(1)}\otimes v^{(2)}$ using Sweedler notation. Note that because of commutativity respectively cocommutativity, $m$ respectively $\Delta $ are obviously well-defined if the tensor products are identified as categorical as above. 

\vskip .05in

For $I$ a finite set we have that $V^{\otimes I}$ is an $R$-algebra with the product defined for a specific linear ordering $I=\{1,2,\ldots j\}$ by:
$$(v_1\otimes \ldots \otimes v_j)(w_1\otimes \ldots \otimes w_j):=(v_1w_1\otimes \ldots \otimes v_jw_j).$$ 
Note that a bimodule map $V\rightarrow V\otimes V$ is a $V^{\otimes 2}$-module map 
using the $V^{\otimes 2}$-module structure on $V$ defined by $(u\otimes v)\cdot w=uvw$ and the algebra structure above on $V\otimes V$. Note that $V^{\otimes \emptyset}=V^{\otimes 0}:=R$ 
 
\vskip .05in 
 
Next we define certain module categories motivated from the category $\mathcal{C}$ in the following way:
Take a surface $S$ bounding a $1$-manifold $\alpha $. Then think of each component of $S$ as a
copy of $V$. Each component of $\alpha $ determines the component of $S$, which it bounds.
Thus $S$ defines a map $\varphi : \underline{\alpha }\rightarrow \underline{S}$. The idea is to have $V^{\otimes \underline{\alpha }}$ act on $V^{\otimes \underline{S}}$
with the multiplication determined by $\varphi $. More precisely the component of $V^{\otimes \underline{\alpha }}$ 
corresponding to a particular element $j\in \underline{\alpha }$ will multiply into the component of $V^{\otimes \underline{S}}$ determined by $\varphi (j)$. 
The morphisms of the linear categories will correspond to the Bar-Natan relations, and the glueing functor is motivated by a corresponding topological glueing functor.

\vskip .05in

Let $J$ be a finite set and $V$ a commutative Frobenius algebra. Then we define the objects of categories $\mathcal{V}(J)$
as follows:  
The objects of $\mathcal{V}(J)$ are $V^{\otimes J}$-modules of the form $V^{\otimes K}$ with $K\neq \emptyset$
if $J\neq \emptyset$. They are
determined by maps $\varphi: J\rightarrow K$.
If $J=\emptyset $ then we have the unique map $\emptyset \rightarrow K$ for any finite set $K$.
The objects of $\mathcal{V}(\emptyset)=:\mathcal{V}$ are the usual $R$-modules $V^{\otimes K}$
for finite sets $K$. 
In general observe that $\varphi $ defines the disjoint sets $I_k:=\varphi ^{-1}(k)\subset I$ for each $k\in \varphi (J)$
such that $I=I_1\cup \ldots \cup I_{\ell }$ with $\ell :=|\varphi (J)|$.
Then the action of $v_1\otimes \ldots \otimes v_{|J|}$ on $w=w_1\otimes \ldots \otimes w_{|K|}$ is 
defined by multiplying the factor of $w$ corresponding to $k$ with all the $v_j$ with $j\in I_k$.
Note that this is well-defined independently of orderings chosen on $J$ and $K$. 
We denote the corresponding $V^{\otimes J}$-module defined by $\varphi : J\rightarrow K$ by $V(\varphi )$. We often write $V^{\otimes K}$
when $\varphi $ is clear. 
For example we have the $V^{\otimes K}$-module $V(\textrm{Id})$ defined by $\textrm{Id}: K\rightarrow K$, which is just the standard $V^{\otimes K}$-module $V^{\otimes K}$ defined above with module structure defined by the algebra structure.
Note that a different module structure is defined if $\textrm{Id}$ is replaced by a nontrivial permutation of the elements of $K$.  
The $V^{\otimes 2}$-module $V$ mentioned above is $V(\varphi )$  with $\varphi : \{1,2\}\rightarrow \{1\}$ the unique map.
\vskip .1in

When describing the module actions we often choose the ordering such that the elements of $\varphi (J)$ precede all the elements of $K\setminus \varphi (J)$. Moreover, given a linear ordering of $J$ there is an induced linear ordering of $\varphi (J)$ as follows.
Note that an ordering of $\varphi (J)$ is the same as an ordering of the set of subsets 
$\{I_1,\ldots ,I_{\ell }\}$.
If we denote the elements of $\varphi (J)$ by $\{1,2,\ldots ,\ell \}$ in the order we want to define then we can arrange that the smallest element in $I_1$ is $1$, the smallest element in $I_2$ is the smallest element in $I\setminus I_1$, and inductively the smallest element in $I_i$ is the smallest element in $I\setminus (I_1\cup \ldots \cup I_{i-1})$. Thus if the set $\{I_1,I_2,\ldots ,I_{\ell }\}$ is ordered by the smallest elements in the $I_j$ then we have that $1\leq i<j\leq \ell$ in $K$ implies $I_i<I_j$. 

\vskip .05in
   
Next we define the morphisms of $\mathcal{V}(J)$.
It will be important for our glueing functors that the morphisms are not all algebraic $V^{\otimes J}$-morphisms
but certain $V^{\otimes J}$-submodules of the modules of all $V^{\otimes J}$-morphisms.
For $J=\emptyset$ the module of morphisms will be generated by taking arbitrary $R$-linear combinations,
compositions and tensor products of $\Delta $, $\mathfrak{k}$ and $\varepsilon $ and permutations of tensor factors.
Let $J$ be an arbitrary finite set (possibly the empty set, in which case we will repeat the definition above).
Then the module of morphisms of $\mathcal{V}(J)$ is generated by 
$V^{\otimes J}$-linear combinations and compositions of morphisms of types (i)-(iv), called the 
\textit{elementary morphisms}.
These model algebraically separating neck-cutting, non-separating neck-cutting, sphere relations and permutations of closed surface components (induced by possible self-isotopies in the embedded setting). 

\textbf{(i):} There are $V^{\otimes J}$-homomorphisms
$V(\varphi )\rightarrow V(\varphi ')$
for $\varphi : J\rightarrow K$ and $\varphi ': J\rightarrow K'$ where
$K'=K\cup \{k'\}$ with $k'\notin K$. 
Let $r\in K$ and let $\phi^{-1}(r)=I_r=I_r'\cup I_r''$ be a disjoint union
(empty sets are allowed).
Then let $\varphi '(j)=\varphi (j)$ for all $j\notin I_r''$ and let $\varphi '(j)=k'$ for 
$j\in I_r''$. 
Then we define the morphism  
$$V(\varphi )\rightarrow V(\varphi ')$$
by the identity on all factors corresponding to $k\neq r$ while the tensor factor 
corresponding to $r$ is mapped to the tensor factors corresponding to $r$ and $k'$ 
using $\Delta $. 
The reason that this is $V^{\otimes J}$-linear is as follows: Firstly, the factors of $V^{\otimes J}$ corresponding to $I_r$
are all multiplied into $w_r=:w$, the $r$-th factor in $V^{\otimes k}$. For simplicity let $I_r=\{1,\ldots ,s\}$,
$I_r'=\{1,\ldots ,s'\}$ and $I_r''=\{s'+1,\ldots ,s\}$. By considering only the relevant factors of the tensor product $V^{\otimes K}$ we have
$\Delta ((v_1\otimes \ldots \otimes v_s)w)=\Delta (v_1\ldots v_sw)=v_1\ldots v_{s'}\Delta (w)v_{s'+1}\ldots v_s=
v_1\ldots v_{s'}w^{(1)}\otimes v_{s'+1}\ldots v_sw^{(2)}$ and also 
$(v_1\otimes \ldots \otimes v_s)\Delta (w)=(v_1\otimes \ldots \otimes v_s)(w^{(1)}\otimes w^{(2)})=v_1\ldots v_{s'}w^{(1)}\otimes v_{s'+1}\ldots v_sw^{(2)}$
because $\Delta $ is a $V$-bimodule map. 

\textbf{(ii):} There are $V^{\otimes J}$-morphisms of the form $\textrm{Id}\otimes \mathfrak{k}\otimes \textrm{Id}: V(\varphi )\rightarrow V(\varphi )$  with $\mathfrak{k}$ acting on a factor of $V^{\otimes K}$ corresponding to some $r\in K$. 
The resulting homomorphisms are $V^{\otimes J}$-linear.  
In fact, let $\varphi^{-1}(r)=\{1,\ldots s\}$. Then we have (ignoring factors in $v_1\otimes \ldots \otimes v_{|J|}$,
which are not in $\varphi^{-1}(r)$): 
$(v_1\otimes \ldots v_s)\mathfrak{k}w=v_1\ldots v_s\mathfrak{k}(w)=\mathfrak{k}(v_1\ldots v_sw)=\mathfrak{k}((v_1\otimes \ldots \otimes v_s)w)$. 

\textbf{(iii):} For each $k\in K\setminus \varphi (J)$ let $K':=K\setminus \{k\}$. Let $\varphi ': J\rightarrow K'$ be defined by restricting the codomain. Then by applying $\varepsilon $ to the factor in $V^{\otimes K}$ corresponding to $k$ and identifying the result using $V^{\otimes K'}\otimes R\cong V^{\otimes K'}$ we have defined a $V^{\otimes J}$-morphism.

\textbf{(iv):} Finally we have permutations of the tensor factors of $K\setminus \varphi (J)$ considered as $\mathcal{V}(J)$-automorphisms of $V(\varphi )$.

\begin{rem} The category $\mathcal{V}(\emptyset )$ only requires a weak Frobenius algebra. 
The structure of the category $\mathcal{V}(J)$ for $|J|\geq 1$ seems difficult at first but is just an algebraic reformulation of how the colorings 
change under neck cutting in a situation with $J=\underline{\alpha}$ the set of boundary components of the surfaces, compare also the combinatorial category defined in section 
\end{rem}

The category $\mathcal{V}(J)$ is an $Ab$-category (see \cite{M}) but not an additive category because
it has no direct sums. 

\begin{defn} \label{def=completion}  For each finite set $J$ let $\overline{\mathcal{V}(J)}$ denote the smallest abelian subcategory of the category of $V^{\otimes J}$-modules containing $\mathcal{V}(J)$, 
which is closed respect to taking arbitrary direct sums and quotients. Equivalently this is the smallest abelian cocomplete subcategory of the category of $V^{\otimes J}$-modules, see \cite{W}, 2.6.8., which contains $\mathcal{V}(J)$.
\end{defn}

The category $\mathcal{V}$ is monoidal. This is \textbf{not} the case for the categories $\mathcal{V}(J)$ for $|J|>0$. But there are obvious 
functors for $J',J''$ any two finite sets and $J=J'\cup J''$:
$$\otimes: \mathcal{V}(J')\times \mathcal{V}(J'')\rightarrow \mathcal{V}(J)$$
with $J=J'\cup J''$. 
It is defined on objects with $\varphi ': J'\rightarrow K'$ and $\varphi '': J''\rightarrow K''$ by 
$V(\varphi ):=V(\varphi ')\otimes V(\varphi '')$ with $K=K'\cup K''$ and $\varphi (i)=\varphi '(i)$ for $i\in J'$ and $\varphi (i)-\varphi '(J")$ for all $i\in J''$. 
Note that for $J'=\emptyset $ with $V^{\otimes \emptyset }$-module $V^{\otimes K}$ and the map $\varphi '' : J''\rightarrow K''$ it follows that $\varphi =\iota \circ \varphi '' $, where $\iota $ is the inclusion $K''\subset K$.

\vskip .1in

In particular $\mathcal{V}(J)$ is a module category over the monoidal category $\mathcal{V}$.
There also exists the natural forget functor $\mathcal{V}(J)\rightarrow \mathcal{V}$  by forgetting the $V^{\otimes J}$-module structure and only keeping the $R$-module structure. In fact any inclusion of finite subsets $J'\subset J$ will induce a forget functor $\mathcal{V}(J)\rightarrow \mathcal{V}(J')$. 

\vskip .1in

The most interesting feature of the categories constructed is the existence of \textit{glueing functors}.

For this it is necessary to introduce first categories $\mathcal{V}(J_1,J_2)$ for finite sets $J_1,J_2$.
We define the objects of the category $\mathcal{V}(J_1,J_2)$ 
to be $V^{\otimes J_1}\otimes V^{\otimes J_2}$-modules (the tensor product in the middle here is understood
as an ordered tensor product, i.\ e.\ we talk about bimodules)
of the form $V^{\otimes K}$ defined by an ordered pair of two maps $J_i\rightarrow K$ for $i=1,2$.
We allow the $J_i$ to be empty in which case we identify $V^{\otimes \empty J_i}=R$ 
and identify objects and morphisms $\mathcal{V}(J,\emptyset )=\mathcal{V}(J)=\mathcal{V}(\emptyset ,J)$.
In the general case objects and morphisms are defined exactly as before by using the 
map $\varphi : J_1\cup J_2\rightarrow K$ defined by $\varphi |J_i=\varphi_i$ for $i=1,2$, and correspondingly restricting to $J_1$ and $J_2$ to define the resulting pair of maps.
The difference is only that we interpret the resulting modules as bimodules, and correspondingly the morphisms
as bimodule morphisms.  

\vskip .1in

\begin{thm} \label{thm=glueing functor} For finite sets $J_1,J_2,J_3$ there is defined a glueing functor
$$\mathfrak{g}=\mathfrak{g}(J_1,J_2,J_3): \mathcal{V}(J_2,J_3)\times \mathcal{V}(J_1,J_2)\rightarrow \mathcal{V}(J_1,J_3)$$
\end{thm}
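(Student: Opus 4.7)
The plan is to construct $\mathfrak{g}$ by an appropriate relative tensor product over $V^{\otimes J_2}$ and then identify the result as an object and morphism of $\mathcal{V}(J_1,J_3)$. On objects, given $V(\psi)\in\mathcal{V}(J_2,J_3)$ encoded by $\psi_2\colon J_2\to L$ and $\psi_3\colon J_3\to L$, together with $V(\varphi)\in\mathcal{V}(J_1,J_2)$ encoded by $\varphi_1\colon J_1\to K$ and $\varphi_2\colon J_2\to K$, I set
$$\mathfrak{g}(V(\psi),V(\varphi)) := V^{\otimes L}\otimes_{V^{\otimes J_2}} V^{\otimes K},$$
where $V^{\otimes L}$ is a right $V^{\otimes J_2}$-module via $\psi_2$ and $V^{\otimes K}$ a left $V^{\otimes J_2}$-module via $\varphi_2$. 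The residual left $V^{\otimes J_1}$-action through $\varphi_1$ and right $V^{\otimes J_3}$-action through $\psi_3$ commute with the balancing and produce a $V^{\otimes J_1}\otimes V^{\otimes J_3}$-bimodule.

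To exhibit this as an object of $\mathcal{V}(J_1,J_3)$, I would introduce the quotient set $N := (K\sqcup L)/\sim$, where $\sim$ is generated by $\varphi_2(j)\sim\psi_2(j)$ for each $j\in J_2$, together with the induced map $\chi\colon J_1\sqcup J_3\to N$ coming from $\varphi_1$ and $\psi_3$ composed with the projection. The core computation is the natural bimodule isomorphism
$$V^{\otimes L}\otimes_{V^{\otimes J_2}} V^{\otimes K}\;\cong\;V^{\otimes N},$$
which I would verify by regrouping factors along the equivalence classes of $N$ and reducing to the basic identity $V^{\otimes A}\otimes_V V\cong V^{\otimes A}$, iterated and combined with the commutativity of $V$. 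The underlying fact is $v_1\otimes\cdots\otimes v_n\otimes w = 1\otimes\cdots\otimes 1\otimes v_1\cdots v_n w$ inside the balanced tensor product, so that each merged class in $N$ collapses to a single copy of $V$.

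On morphisms, define $\mathfrak{g}(f,g):=f\otimes_{V^{\otimes J_2}}g$. Since every morphism in $\mathcal{V}(J_1,J_2)$ or $\mathcal{V}(J_2,J_3)$ is a $V^{\otimes J_i\cup J_{i+1}}$-linear combination of compositions of elementary morphisms of types (i)--(iv), it suffices to verify that tensoring each elementary morphism with an identity yields an elementary morphism (or short composition) in $\mathcal{V}(J_1,J_3)$. Type (ii) ($\mathfrak{k}$) and type (iv) (permutations of unconstrained factors) act on a single tensor factor and descend directly under the identification with $V^{\otimes N}$. A type (iii) counit $\varepsilon$ is by definition applied to a factor outside the image of the relevant $\varphi$ or $\psi$; such a factor projects to a singleton class of $N$ outside the image of $\chi$, so $\varepsilon$ remains a legitimate elementary morphism in $\mathcal{V}(J_1,J_3)$.

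The main obstacle is the type (i) neck-cutting morphism $\Delta$ applied to a factor $r\in K$ with $r\in\varphi_2(J_2)$, since the factor indexed by the merged class $[r]\in N$ is not an independent tensor factor in $V^{\otimes L}\otimes_{V^{\otimes J_2}}V^{\otimes K}$ prior to the identification of paragraph two. Here the $V$-bimodule property of $\Delta$, already the content of the $V^{\otimes J}$-linearity verification for type (i) morphisms, is what ensures compatibility with the $V^{\otimes J_2}$-balancing: the identity $\Delta(vw)=v\Delta(w)$ together with cocommutativity lets one slide the two factors of $\Delta(w)$ past the balancing, so that after identification with $V^{\otimes N}$ the tensored morphism becomes an elementary type-(i) morphism applied to the merged class, possibly composed with a type-(iv) permutation to match the chosen ordering on the target set. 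Once this step is in place, functoriality of $\mathfrak{g}$, meaning preservation of identities and composition, follows automatically from the bifunctoriality of $\otimes_{V^{\otimes J_2}}$ and the naturality of the isomorphisms $V^{\otimes L}\otimes_{V^{\otimes J_2}}V^{\otimes K}\cong V^{\otimes N}$.
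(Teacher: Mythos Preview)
Your approach is essentially the paper's, repackaged: the paper defines $\mathfrak{g}$ on objects directly via the quotient set $K=(K'\cup K'')/\!\sim$ and the projection $P:V^{\otimes K''}\otimes V^{\otimes K'}\to V^{\otimes K}$ given by multiplication, then on morphisms checks that $P\circ(g\otimes f)$ factors through $P$ and yields a $\mathcal{V}(J_1,J_3)$-morphism. Your relative tensor product $V^{\otimes L}\otimes_{V^{\otimes J_2}}V^{\otimes K}$ is exactly the cokernel that $P$ projects onto, so the two constructions coincide; the conceptual gain of your framing is that well-definedness of $f\otimes_{V^{\otimes J_2}}g$ is automatic from $V^{\otimes J_2}$-linearity, leaving only the check that the result is of the right form.

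There is one small gap in your case analysis for type~(i). You assert that $\mathrm{Id}\otimes_{V^{\otimes J_2}}\Delta_r$ becomes a type-(i) morphism on the merged class in $N$. But if the splitting $I_r=I_r'\cup I_r''$ is such that some $j_1\in I_r'\cap J_2$ and $j_2\in I_r''\cap J_2$ satisfy $\psi_2(j_1)=\psi_2(j_2)$, then in the \emph{target} quotient set the two new factors $r,k'$ are re-identified via $L$, and no splitting survives. In that case the glued morphism is not type~(i) but type~(ii): one computes $m\circ\Delta=\mathfrak{k}$ on the merged factor. This is precisely the phenomenon the paper alludes to via the Frobenius identity $\Delta\circ m=(m\otimes\mathrm{Id})\circ(\mathrm{Id}\otimes\Delta)$ and the surface picture (a separating neck-cut that becomes nonseparating after gluing). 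Once you add this case the argument is complete.
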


\begin{proof} Given the object $V(\varphi_1,\varphi_2)$ in $\mathcal{V}(J_1,J_2)$ with $\varphi_i: J_i\rightarrow K'$ for $i=1,2$ and the object 
$V(\varphi_2',\varphi_3)$ in $\mathcal{V}(J_2,J_3)$ with $\varphi_2': J_2\rightarrow K'$ and $\varphi_3: J_3\rightarrow K''$. Then we define 
the set $K$ by taking the set of equivalence classes in the disjoint union $K'\cup K''$ (replace $K'$ respectively $K''$ by $K'\times \{0\}$ respectively $K''\times \{1\}$) by the equivalence relation
$\varphi_2(j)\sim \varphi_2'(j)$ for all $j\in J_2$. We define $\varphi :J_1\rightarrow K$ respectively 
$\varphi ': J_3\rightarrow K$ by composition of $\varphi_1 $ respectively $\varphi_3$ with the maps
$K'\subset K'\cup K''\rightarrow K$ respectively $K''\subset K'\cup K''\rightarrow K$ where $K'\cup K''\rightarrow K$ is the projection onto the set of equivalence classes. This defines 
$\mathfrak{g}(V(\varphi_2',\varphi_3),V(\varphi_1,\varphi_2)):=V(\varphi ,\varphi ')$. 
Note that $V(\varphi_2',\varphi_3)\otimes V(\varphi_1,\varphi_2)=V^{\otimes K''}\otimes V^{\otimes K'}$ is also a 
$V^{\otimes J_1}\otimes V^{\otimes J_3}$-module in the obvious way and there is a natural projection
$V(\varphi_2',\varphi_3)\otimes V(\varphi_1,\varphi_2)\rightarrow V(\varphi, \varphi ')$ 
($V^{\otimes J_1}\otimes V^{\otimes J_3}$-morphism), defined by multiplications 
$m: V\otimes V\rightarrow V$ for factors with  $\varphi_2(j)=\varphi_2'(j)$ and $j\in J_2$.

We will indicate how to construct the functor on morphisms. It suffices to consider elementary morphisms 
(i)-(iv) above and extend using composition and linearity.  
Given $f: V(\varphi_1,\varphi_2)\rightarrow V(\psi_1,\psi_2)$ and $g: V(\varphi_2',\varphi_3)\rightarrow V(\psi_2',\psi_3)$ with $\varphi_i,\varphi_2'$ as above for $i=1,2,3$, and $\psi_i: J_i\rightarrow L'$ for $i=1,2$, $\psi_2': J_2\rightarrow L''$, $\psi_3: J_3\rightarrow L''$. Then we have defined 
$\mathfrak{g}(V(\varphi_2',\varphi_3),V(\varphi_1,\varphi_2))=V(\varphi ,\varphi ')\in \mathcal{V}(J_1,J_3)$ and
$\mathfrak{g}(V(\psi_2',\psi_3),V(\psi_1,\psi_2))=V(\psi ,\psi ')\in \mathcal{V}(J_1,J_3)$
and have to define a homomorphism $\mathfrak{g}(g,f): V(\varphi ,\varphi ')\rightarrow V(\psi ,\psi ')$ in the category $\mathcal{V}(J_1,J_3)$. 
Using the natural projection $P: V(\psi_2',\psi_3)\otimes V(\psi_1,\psi_2)\rightarrow V(\psi, \psi ')$ and $g\otimes f: V(\varphi_2',\varphi_3)\otimes V(\varphi_1,\varphi_2)\rightarrow V(\psi_2',\psi_3)\otimes V(\psi_1,\psi_2)$ we can consider $P\circ (g\otimes f)$. By using $g\otimes f=(g\otimes \textrm{Id})\circ (\textrm{Id}\otimes f)$ we see that it suffices to assume that one of $f$ or $g$ is the identity morphism. Note that the set of $\mathcal{V}(J_1,J_3)$-morphisms is closed with respect to composition. So it suffices to see that $P\circ (\textrm{Id}\otimes f)$ factors through a $\mathcal{V}(J_1,J_3)$-morphism $V(\varphi, \varphi ')\rightarrow V(\psi, \psi ')$. This can be checked for all the elementary morphisms (i)-(iv). For example let $f$ be of type (i) and $L'=K'\cup \{l\}$ for some $l\notin K$ and $l=\psi_2(j)=\psi_2'(j)$ for some $j\in J_2$. Then it follows from the previous observation that
$$
\begin{CD}
V\otimes V@>\textrm{Id} \otimes \Delta>>(V\otimes V)\otimes V\cong V\otimes (V\otimes V)@>m\otimes \textrm{Id}>>V\otimes V
\end{CD}
$$ 
is equal to $\Delta \circ m$ noting that the glueing can be thought of using $m$ to multiply factors of the tensor product that are glued together. If $f$ is of type (ii) then we will have to use the identity
$m\circ (\mathfrak{k}\otimes \textrm{Id})=\mathfrak{k}\circ m$ to see that the above homomorphism factors through a 
morphism of the category $\mathcal{V}(J_1,J_3)$. 
It is instructive to draw a corresponding diagram of surfaces replacing surface components for tensor product factors, and having in mind that commutative Frobenius algebras are 
equivalent to $2$-dimensional topological quantum field theories \cite{Ko} and noticing that glueing is algebraically represented by multiplication. 
\end{proof}

\begin{rem} Frobenius algebras and its relation with $2$-dimensional field theory are well-studied \cite{Ko}. Our comments in section 5 will show that the glueing result above has a $2$-categorical interpretation. In a certain way our results in the following two sections will show that some ingredients of $2$-dimensional field theory extend in a restricted way to $3$-dimensional field theory. It should be observed that $\Delta \circ m: V\otimes V\rightarrow V\otimes V$ satisfies the \textit{Yang Baxter equation}, even though this endomorphism is of course not invertible in all nontrivial cases.
 \end{rem} 

We refer to Kock \cite{Ko} for a detailed  discussion of the structure and properties of Frobenius algebras.
Recall that Frobenius algebras often are defined from the non-degenerate Frobenius form $\mathfrak{p} :V\otimes V\rightarrow R$. 
In terms of our definition $\mathfrak{p}=\varepsilon \circ m$.
By checking the definitions it follows that
$$\mathfrak{k}=(\mathfrak{p}\otimes \textrm{Id})\circ (\Delta \otimes \textrm{Id})\circ \Delta.$$ 
Because of the above definitions we are particularly interested in the algebra of endomorphisms of the tensor algebra of $V$, which is generated by composition, tensor product
and $R$-linear combinations of $\Delta $, $\varepsilon$ and $\mathfrak{k}$ and permutations of factors. 
Let $d_n: V\rightarrow V^{\otimes n}$ be defined by $d_0=\varepsilon $, $d_1=\textrm{Id}$, $d_2=\Delta $ and 
$d_n=(\textrm{Id}\otimes d_2)\otimes d_{n-1}$. 
Then the algebra is $R$-generated by the $d_n$ and $\mathfrak{k}:V\rightarrow V$ (or $\mathfrak{p}$).
Note that the family of morphisms $\{d_n\}$ satisfies a nice \textit{operadic} structure. For example 
$(d_{n_1}\otimes d_{n_2}\otimes \ldots \otimes d_{n_k})\circ d_k=d_{n_1+\ldots +n_k}$. This follows from coassociativity.
Also there is nice compatibility with permutations. $\sigma_* \circ d_n=d_n$ if $\sigma _* $ is the permutation of $V^{\otimes n}$ induced by the permutation $\sigma $.
For $n\geq 1$ also define $\varepsilon^n: V^{\otimes n}\rightarrow V$ inductively by $\varepsilon^1=\textrm{Id}$ and 
$\varepsilon^n=(\varepsilon \otimes \textrm{Id})\circ \varepsilon^{(n-1)}$.
Then $\varepsilon^n\circ d_n=\textrm{Id}$ showing explicitly a left inverse of $d_n$.  

\vskip .1in

The basic examples of commutative Frobenius algebras over fields come from the cohomology modules of 
connected closed $n$-dimensional oriented manifolds $Y\neq \emptyset $ with nonzero cohomology only 
in even dimensions.
For the rest of this section let $R$ be a field.
The Frobenius structure is usually described in terms of de Rham cohomology with the wedge product defining the multiplication and $\varepsilon $ given by integration over the manifold.
We like to give a description in terms of the homology of $Y$. 
For the following see \cite{A} for details. Under Poincare duality also $H_*(Y)$ is a Frobenius algebra. 
The multiplication on $H_*(Y)$ is given by the intersection pairing
$$\mathfrak{i}: H_p(Y)\otimes H_q(Y)\rightarrow H_{p+q-n}(Y).$$
The unit $\mu :R\rightarrow H_*(Y)$ is defined by $1\mapsto [Y]\in H_n(Y)$ with $[Y]$ the fundamental class. 
Let $\varepsilon : H_*(Y)\rightarrow H_0(Y)=H_*(\textrm{pt})$ be the projection on the 
$0$-dimensional homology, which is induced by the projection from $Y$ onto a point $\textrm{pt}\in Y$. 
Then the composition $\varepsilon \circ \mathfrak{i}$ is the Frobenius pairing.  
Let $\Delta : H_*(Y)\rightarrow H_*(Y\times Y)\cong H_*(Y)\otimes H_*(Y)$ be the map in homology induced
by the diagonal and the Eilenberg-Zilber isomorphism. This map is the coproduct on $H_*(Y)$.
It is interesting to note that the coproduct and counit are defined for any topological space and give rise to a coalgebra structure, see \cite{Mc}. This coalgebra structure always has a unit $\eta : R\rightarrow H_*(Y)$ defined by 
mapping $1\in R$ to the generator of the $0$-dimensional homology of $Y$. We have to distinguish this unit from the unit for the multiplication.
The interesting point for us here is that $\Delta $ and $\varepsilon $ are essentially induced by geometric maps. 
Note that the handle operator is the Poincare dual of the Euler class of the manifold. 

\begin{exmp}[Khovanov] \label{example=Khovanov} The Frobenius algebra $H_*(\mathbb{C}P^1)=H_*(S^2)$ has generators $1\in H_0(S^2)$ and 
$b\in H_2(S^2)$ with multiplications $m(1\otimes 1)=0$, $m(1\otimes b)=m(b\otimes 1)=1$ and $m(b\otimes b)=b$.
The coproduct is given by $\Delta (1)=1\otimes 1$ and $\Delta (b)=1\otimes b+b\otimes 1$. Finally we have
$\varepsilon (1)=1, \varepsilon (b)=0$, $\mu (1)=b$, and  
the handle operator $\mathfrak{k}$ is multiplication by $m\circ \Delta (b)=2$ (The Poincare dual of the Euler class of the tangent bundle of $S^2$. Recall that in the Khovanov Frobenius algebra $H^*(S^2)\cong R[x]/(x^2)$ the handle operator given by multiplication by $2x$, which is the Euler class of the tangent bundle of $S^2$.).
\end{exmp}
  
\section{The Bar-Natan functor}\label{sec=functor}

For given $(M,\alpha )$ let $\mathfrak{c}$ be the associated directed graph from the presentation of the category $\mathcal{C}=\mathcal{C}(M,\alpha )$. 
We want to define a functor $\mathcal{F}\mathfrak{c}\rightarrow \mathcal{V}(\underline{\alpha })=:\mathcal{V}(\alpha )$, which factors through the quotient category $\mathcal{C}$ of $\mathcal{F}\mathfrak{c}$. The resulting functor $\mathcal{C}\rightarrow \mathcal{V}(\alpha )$
is called the \textit{Bar-Natan functor}. 

For each vertex $S$ of $\mathfrak{c}$ let $F(S):=V^{\otimes \underline{S}}$. This $R$-module has a natural $(V^{\otimes \underline{\alpha }})$-module structure induced from the map $\varphi $, which assigns to each element of $\underline{\alpha }$ the component of $S$, which contains it in its boundary.
 
Next consider a neck cutting arrow $W: S\rightarrow S'$ determined by a nonseparating curve on the surface $S$. Then there is a natural bijection $\underline{S}\rightarrow \underline{S'}$. 
We assign to this neck-cutting arrow the morphism $F(W): V^{\otimes \underline{S}}\rightarrow V^{\otimes \underline{S'}}$ defined by $\mathfrak{k}$ in the tensor factor corresponding to the component
containing the curve. This a morphism $V(\varphi)\rightarrow V(\varphi )$ in the category $\mathcal{V}(\underline{\alpha })$.
Similarly for an arrow $W: S\rightarrow S'$ defined by a separating curve on $S$ we have a bijection between $\underline{S'}$ and $\underline{S}\cup \underline{S_0}$. 
Here we have to choose a component of $S'$, which we consider to be the new component.
Then $F(W)$ is defined by the 
corresponding morphism of type (i) from section \ref{sec=algebra}. The important point is that the morphism will not depend on which of the two components resulting from the neck-cutting is considered to be the new component. 
This follows from the cocommutativity of $\Delta $.
Finally to some arrow corresponding to the vanishing of a $2$-sphere component assign the 
morphism $\varepsilon $ composed with the isomorphisms $V\otimes R\cong V$ respectively $R\otimes V\cong V$. 
Note that $\varepsilon $ only acts on factors corresponding to closed components.
This corresponds to a morphism of type (iii) in section \ref{sec=algebra}.
Then to a self-isotopy of a 
surface $W:S\rightarrow S$ assign the permutation isomorphism $F(W): V^{\otimes \underline{S}}\rightarrow V^{\otimes \underline{S}}$ defined by the induced permutation of the components 
of $S$.  Note that $F(W)$ can be different from the identity only if $W$ has closed components because boundaries are assumed fixed.
Thus $F(W)$ only permutes the factors corresponding to closed components.  
By the universal property we thus have defined a functor $F: \mathcal{F}\mathfrak{c}\rightarrow \mathcal{V}(\alpha )$. 

For the definition of the colimit $\colim F$ or inductive limit of a functor $F$ see \cite{M}, page 67, and also \cite{W} for colimits of functors into categories of $R$-modules. Recall \ref{def=completion} for the definition of the cocompletion of $\mathcal{V}(J)$ for $J$ a finite set.

\vskip .1in

\begin{thm} \label{thm=main} Let $V$ be Frobenius algebra over $R$ (or a weak Frobenius algebra if $\alpha =\emptyset$). Then the functor $F=F_V$ defined above factors to define a functor also denoted $F=F_V: \mathcal{C}(M,\alpha )\rightarrow \mathcal{V}(\alpha )$. 
The colimit of the composite of this functor with the inclusion functor into $\overline{\mathcal{V}(\alpha )}$ is the Bar-Natan module 
$\mathcal{W}(M,\alpha )=\mathcal{W}(M,\alpha;V)$.
In particular the Bar-Natan module is naturally a $(V^{\underline{\alpha }})$-module and an object in the category
$\overline{\mathcal{V}(\alpha )}$.
\end{thm}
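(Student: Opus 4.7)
The plan is to prove the theorem in two parts: first show that $F$ defined on $\mathcal{F}\mathfrak{c}$ descends to a functor on $\mathcal{C}(M,\alpha)$, and then identify the colimit of $\hat{F}$ with $\mathcal{W}(M,\alpha;V)$.

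For the first part, I would use the presentation of $\mathcal{C}$ as the quotient of $\mathcal{F}\mathfrak{c}$ by the Cerf-theoretic relations discussed in section \ref{sec=compression}: (i) reorderings of non-interacting critical points (including commutations with self-isotopies), and (ii) insertions or deletions of a cancelling $2/3$-handle pair. For case (ii), a separating neck-cutting that creates a new $2$-sphere component followed by its subsequent $3$-handle cancellation corresponds in $\mathcal{V}(\alpha)$ to $(\varepsilon \otimes \textrm{Id})\circ \Delta$, which equals $\textrm{Id}$ by the counit axiom. For case (i), I would check the relevant commutations elementary-morphism by elementary-morphism against the categorical axioms of $V$: two separating neck-cuttings on the same component commute by coassociativity of $\Delta$; a non-separating neck-cutting and a separating one on the same component commute by the hypothesis $(\textrm{Id}\otimes \mathfrak{k})\circ \Delta=\Delta \circ \mathfrak{k}$; reorderings involving $2$-sphere evaporations reduce to naturality of $\varepsilon$ together with coassociativity; and self-isotopies induce permutations of tensor factors corresponding to closed components, whose compatibility with neck-cuttings is ensured by cocommutativity of $\Delta$ and the symmetry built into the elementary morphisms (i)-(iv) of section \ref{sec=algebra}.

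For the second part, I would construct a natural $R$-linear surjection $\Phi: \bigoplus_{S\in \mathcal{C}^0} F(S)\rightarrow \mathcal{W}(M,\alpha;V)$ sending a pure tensor $x_1\otimes \ldots \otimes x_{|\underline{S}|}\in F(S)$ to the class of the surface $S$ with its components colored by the $x_i$, extended linearly. Since the Bar-Natan relations of section \ref{sec=defs} are modelled exactly on the elementary morphisms (i)-(iv) of section \ref{sec=algebra}—separating neck-cut corresponding to $\Delta$, non-separating to $\mathfrak{k}$, sphere-evaporation to $\varepsilon$, and self-isotopy to a permutation—every generating colimit relation $v-\hat{F}(m)v$ for an elementary $m$ maps to zero. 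Hence $\Phi$ descends to a surjection $\bar{\Phi}:\colim(\hat{F})\rightarrow \mathcal{W}(M,\alpha;V)$. For an inverse I would send each isotopy class of colored surfaces $S$ to the corresponding element in the summand $F(S)$ of the direct sum: this is well-defined on Bar-Natan relations because each such relation is realized by an elementary morphism in $\mathcal{C}(M,\alpha)$, and thus identifies the two sides in the colimit. The two maps are mutually inverse on generators, giving the required isomorphism in $\overline{\mathcal{V}(\alpha)}$; that this is a $V^{\otimes \underline{\alpha}}$-module isomorphism is automatic because on both sides the action is by multiplication into boundary components.

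The main obstacle is the case analysis for commutation relations in part one, especially the reorderings that involve a neck-cutting acting on a component that has just been produced by a previous neck-cutting. Here one must keep track of the combinatorics of "which component is new" as the map $\varphi: J\rightarrow K$ in section \ref{sec=algebra} changes; the key point will be that the morphism of type (i) in $\mathcal{V}(\alpha)$ does not depend on this choice, by cocommutativity of $\Delta$, and that the operadic identity $(d_{n_1}\otimes \ldots \otimes d_{n_k})\circ d_k=d_{n_1+\ldots+n_k}$ together with its compatibility with permutations collapses the apparent ambiguities into a single well-defined assignment in $\mathcal{V}(\alpha)$.
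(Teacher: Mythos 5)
Your proposal is correct and follows essentially the same route as the paper: first checking that the Cerf-theoretic relations (reorderings of critical points, cancelling $2$/$3$-handle pairs, isotopy rearrangements) are respected via coassociativity, the counit identity $(\varepsilon \otimes \textrm{Id})\circ \Delta =\textrm{Id}$, the handle-operator compatibility and permutation invariance, and then identifying the colimit with $\mathcal{W}(M,\alpha ;V)$ by a pair of mutually inverse maps matching elementary morphisms with Bar-Natan relations (and loop self-isotopies with permutations of closed components). The only cosmetic difference is that you build the surjection from the colimit to the skein module first, whereas the paper starts with the map from the free module of colored surfaces to the colimit; the content is the same.
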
 

\vskip .1in

We have to compose the functor $F_V$ with the inclusion functor into $\overline{\mathcal{V}(\alpha )}$ because the colimit in general does not exist in $\mathcal{V}(\alpha )$, at least in all nontrivial cases.  
The functor $F$ is \textit{not} a full functor in general (see the examples below but also compare Theorem 5.1 in \cite{K1} for the case $M$ a closed $3$-ball.). It is \textit{deviation from the functor being full}, which gives the Bar-Natan
modules its specific structure.  
If $\alpha =\emptyset$ then the functor is onto on the object level because there are surfaces with arbitrary number of closed components embedded in a $3$-ball in $M$. But if $\alpha \neq \emptyset$ this is not necessarily the case. and it is not obvious which modules are realized. 
There is a first obstruction from the homology classes of the components of $\alpha $. In fact if $V(\varphi )$ is a module in $\mathcal{V}(\alpha )$ for some $\varphi : \underline{\alpha }\rightarrow K$ with $K$ a finite set,
then the sets $\varphi^{-1}(k)$ for $k\in K$ determine submanifolds $\alpha^k$ of $\alpha $ bounding connected surfaces in $M$, and thus in particular are trivial in $\mathbb{Z}_2$- homology, for all $k\in K$. 
If $\alpha ^k=0\in H_1(M)$ for some $k$ then it bounds a connected surface in $M$. 
But this in general will not suffice to realize the module structure because the $\alpha^k$ have to bound \textit{disjointly} embedded connected 
surfaces in $M$.  If $\alpha $ bounds in $M$ then we only know that there exists some positive integer $n_0$ such that an infinite family of $(V^{\otimes \underline{\alpha }})$-modules
$V^{\otimes K}$ for $|K|\geq n_0$ is realized. (Note that there can exist different $\varphi : \underline{\alpha }\rightarrow K$ with $|K|$ minimal.) In fact if some module is realized then we can always add trivial components 
in a ball to stabilize. If we do not fix a specific $\mathbb{Z}_2$-homology class to be realized then we can omit all closed components.  
If separation allows to form a connected sums of components then we can reduce the cardinality if 
the image of $\varphi $.
Note that we can always arrange that the image of $\varphi $ is just a single element in a connected manifold $M$. Questions of the above form are much more interesting in the oriented case than in the orientable one, which we consider. 
(If we want to keep orientations the possible connected sums are further restricted and it is more difficult to realize the modules.)

\begin{proof} (of Theorem \ref{thm=main}) The first assertion follows easily, for changing the order of two critical points (corresponding to a relation
on the edge set of the free category) means that two induced morphisms, with one defined by composition of two arrows, coincide. But for index $2$ critical points this is clear from the coassociativity of $\Delta $ and the fact that the $\Delta $-actions on different factors of a tensor product commute. The reordering of index $2$ and index $3$ critical points also works because in this case $\varepsilon $ only has to act on a component not associated to the preceding neck-cutting, otherwise the reordering cannot appear. Also the deletion of a cancelling $2$-$3$ handle pair is allowed because $(\varepsilon \otimes \textrm{Id})\circ \Delta =\textrm{Id}$, and thus the morphism will not change. Because of the same reason also cancelling $2$-$3$-handle pairs can be introduced. Finally, changing isotopies by isotopy, or rearranging the appearance of isotopies in the order up to isotopy, will not change the induced linear morphism in a composition of arrows (consider how components are permuted). Obviously the colimit of the functor on $\mathcal{F}\mathfrak{c}$ is equal to the colimit of the functor on $\mathcal{C}$ and can be described as follows. Take the direct sum $\oplus_{v}V_v$ where $v$ is a vertex of $\mathfrak{c}$ representing the istotopy class of a surface $S\subset M$ and $F(v)=V_v=V^{\otimes \underline{S}}$ is the corresponding $(V^{\otimes \underline{\alpha }})$-module in the category $\mathcal{V}(\alpha )$. 
Then take the quotient by all relations $x'=F(a)x$ where $a: v\rightarrow v'$ is any arrow between $v$ and $v'$ and $x\in V_v$ and $x'\in V_{v'}$.   
The isomorphism with the skein module is now clear from the definitions. Note that a factor $V^{\otimes \underline{S}}$corresponds to the subspace of 
the free module of $V$-colored surfaces that is spanned by letting the colors vary for a fixed isotopy class of a geometric surface.
Thus there is a well-defined linear map from the free module modulo component-multilinearity to $\colim F_V$, which is onto. But the skein relations map trivially by definition. Conversely to a vector $v$ in $V^{\otimes \underline{S}}$ assign the isotopy class of the corresponding surface $S$ colored by the vector $v$ if 
$v$ is of the form $v_1\otimes \ldots \ldots v_{|S|}$, ordered in some way corresponding to an ordering of the surface components. Otherwise decompose the vector into vectors of this form and take the corresponding $R$-linear combination. 
Note that if there is an arrow from a vertex to itself permuting the components of a surface then corresponding $V$-colored surfaces are isotopic as $V$-colored surfaces. Thus vectors identified in this way in $V^{\underline{S}}$ will map to the same $V$-colored surface. The other arrows precisely correspond to the skein relations.
This defines a homomorphism of $R$-modules $\colim F\rightarrow \mathcal{W}(M,\alpha )$, which is inverse to the homomorphism above.
It is clear from the construction that the colimit actually is a module in the category $\overline{\mathcal{V}(\alpha )}$, and the module structure coincides with the 
one defined directly above. Moreover the $V^{\otimes \underline{\alpha }}$-module structure defined above obviously coincides with the one induced from the the colimit construction. 
\end{proof}
 
\vskip .05in

It should be noted that the functor on the object level is completely determined by $\varphi : \underline{\alpha }\rightarrow \underline{S}$, it only recognizes $\pi_0(S,\alpha )$. The functor recognizes the \textit{compressible genus} of the surface $S$ on the morphism level. In fact, if a sequence of Bar-Natan relation applies reducing the genus then 
there is a corresponding morphism $S\rightarrow S'$. Then the change in $\pi_0$ is seen by the morphisms induced from $\Delta $ while the change in genus is seen by the morphisms defined from the handle operator. More about this in section \ref{sec=bicategory}.  

\vskip .05in

Let $i: (M,\alpha )\rightarrow (N,i(\alpha ))$ be an embedding. Recall the functor $\mathcal{C}(i): \mathcal{C}(M,\alpha )\rightarrow \mathcal{C}(N,i(\alpha ) )$ defined in section 1: 
$S\mapsto g(S)$ and any compression bordism $W: S\rightarrow S'$ with $W\subset M\times I$ 
is mapped to $(g\times \textrm{Id})(W)$ by the functor. Because $i$ induces a bijection
$\underline {\alpha }\rightarrow \underline{i(\alpha )}$ there exists a natural functor $\mathcal{V}(g): \mathcal{V}(\alpha )\rightarrow \mathcal{V}(i(\alpha ))$ such that the diagram:
$$
\begin{CD}
\mathcal{C}(M,\alpha )@>\mathcal{C}(i)>>\mathcal{C}(N,i(\alpha ))\\
@V{F_V}VV @V{F_V}VV \\
\mathcal{V}(\alpha )@>\mathcal{V}(i)>>\mathcal{V}(i(\alpha ))
\end{CD}
$$
commutes.
Because of the naturality of the colimit there exists an induced module homomorphism
$\mathcal{W}(i): \mathcal{W}(M,\alpha )\rightarrow \mathcal{W}(N,i(\alpha ))$ respecting the module structures, i.\ e.\ 
for $w\in V^{\otimes \underline{\alpha }}$ and $x\in \mathcal{W}(M,\alpha )$ we have
$$\mathcal{W}(i)(w\cdot x)=\mathcal{V}(i)(w)\cdot \mathcal{W}(i)(x).$$ 
If $i$ is a diffeomorphism then the induced module homomorphism is of course an isomorphism.

\vskip .1in

In section \ref{sec=compression} we defined the two categories $\mathcal{U}[n]$ and $\mathcal{C}[n]$ for nonnegative integers $n$.
Recall that we fix a collection $\alpha =\cup_{i=1}^n\alpha_i$ of $n$ circles in the definition.
The notation for the corresponding category is $\mathcal{U}(\alpha )$ rspectively $\mathcal{C}(\alpha )$.
The Bar-Natan module corresponding to $\mathcal{U}[n]$ generalizes the skein modules of non-embedded
orientable dotted surfaces originally defined by Bar-Natan. It has been shown in \cite{K1} that generalized Bar-Natan modules
$\mathcal{W}[n]:=\colim F_V$, 
where $F_V$ is the obvious functor defined on $\mathcal{C}[n]$, are isomorphic to
the standard one-dimensional $V^{\otimes n}$-module $V^{\otimes n}$. In particular the Bar-Natan module of non-embedded closed orientable surfaces is always isomorphic to the ground ring $R$. 
For each $n\geq 0$, $\mathcal{W}[n]$ is an $R$-algebra with multiplication defined by representing an orientable surface bounding $\alpha $ by discs colored by elements of $V$ and multiplying the colors on components.
There also are natural homomorphisms:
$$\mathcal{W}[n]\otimes \mathcal{W}[m]\rightarrow \mathcal{W}[n+m]$$ 
defined by disjoint union.

\vskip .05in
 
Note that forgetting the embedding of surfaces into $M$, defines the homomorphism of $V^{\otimes \underline{\alpha }}$-modules
$$\mathcal{W}(M,\alpha )\rightarrow \mathcal{W}(\underline{\alpha }).$$
The categories $\mathcal{W}(\underline{\alpha })$ are the colimit modules $\colim F_V$ for $F_V$ the functor defined on $\mathcal{C}[n]$. 

\vskip 0.05in

The $V^{\otimes n}$-modules $\mathcal{W}_-[n]$ of possibly non-orientable non-embedded surfaces are equally important. 
First consider $n=0$. Given an arbitrary surface it is easy to see that by doing neck-cuttings and sphere compressions 
the surface can be reduced to a disjoint union of real projective planes $P^2$. This follows from the fact that the connected sum of three $P^2$'s is the connected sum of $P^2$ and a torus. Now given a connected sum $\sharp_3P^2$, neck-cutting can lead to a disjoint union of three $P^2$, or to a disjoint union of a torus and $P^2$, which can be compressed to $P^2$. 
We can in fact identify $\mathcal{W}_-:=\mathcal{W}_-[0]$ with the $R$-module defined by the quotient of symmetric algebra $SV$ on $V$(note that there always exist diffeomorphisms switching the order of real projective space components) by the ideal generated by $\Delta(v)-(\varepsilon \circ \mathfrak{k})(v)$ for all $v\in V$. 
The relation is coming from compressing the Klein bottle $P^2\sharp P^2$ in two different ways. In fact we can do a separating neck-cutting to get a disjoint union of two $P^2$'s or we can do a non-separating neck-cutting giving a $2$-sphere.
Let $(SV)_-$ be the quotient of $SV$ by the ideal generated by these relations. There is an obvious homomorphism $(SV)_-\rightarrow \mathcal{W}_-$ induced by the homomorphism $SV\rightarrow \mathcal{W}_-$ assigning to the element of the symmetric algebra $[v_1\otimes v_2\otimes \ldots \otimes v_r]$ (we consider the symmetric algebra as quotient of the tensor algebra) the disjoint union of $r$ real projective planes colored by the elements $v_i$. This homomorphism is onto. It is not hard to see that application of neck-cutting and sphere relation defines a homomorphism $\mathcal{W}_-\rightarrow SV_-$ (use the normal form for each component given as $\sharp_i T$ respectively $\sharp_iT^i\sharp P^2$), and the composition of the two homomorphisms is the identity. In fact note that on each generator we can reduce the surface to an incompressible one, i.\ e.\  a disjoint union of $P^2$'s by just doing non-separating neck-cuttings and sphere compressions. We just have to show that this is compatible with Bar-Natan relations. It certainly is compatible with sphere compressions and non-separating neck-cuttings. Consider a separating neck-cutting. Then apply the procedure to each of the resulting colored surfaces. If the neck-cutting curve separates an orientable component or splits an orientable component from a non-orientable component the expansion will not change. Assume that the curve separates a non-orientable component into two non-orientable components. Now expand as before. Each of the components will reduce to a $P^2$ so the connected sum of the two reduces to the Klein bottle $P^2\sharp P^2$. In this case the expansions are the same module the above relation.  

\vskip .05in

Now consider $n>0$. Applying neck-cuttings, the surface can be reduced to closed components and components bounding disks. Moreover we can choose a fixed ordering of the components $\alpha_i$ of the boundary for the isomorphism.
We then have:

\begin{prop} The epimorphism
$$(SV)_-\otimes V^{\otimes n}\rightarrow \mathcal{W}_-[n]$$
\textit{is an isomorphism.}
\end{prop}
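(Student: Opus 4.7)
The plan is to construct a two-sided inverse $\Phi:\mathcal{W}_-[n]\to (SV)_-\otimes V^{\otimes n}$ in direct parallel with the argument for $n=0$ given in the paragraph preceding the proposition. First I define a preliminary assignment $\tilde\Phi$ on $V$-colored surfaces $S$ bounding $\alpha$: using separating neck-cuttings to break up components containing more than one boundary circle, then using non-separating neck-cuttings together with the handle-operator bookkeeping on each component with exactly one boundary circle, and finally applying the $n=0$ normal-form reduction to the leftover closed components, I bring $S$ into the normal form consisting of $n$ disks $D_1,\ldots,D_n$ (with $\partial D_i=\alpha_i$ in the fixed ordering) carrying colors $w_1,\ldots,w_n\in V$, disjoint union with $r$ closed real projective planes carrying colors $v_1,\ldots,v_r\in V$. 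Set $\tilde\Phi(S):=[v_1\otimes\cdots\otimes v_r]\otimes(w_1\otimes\cdots\otimes w_n)$, the bracket denoting the class in $(SV)_-$.

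The heart of the argument is to show that $\tilde\Phi$ descends through the Bar-Natan relations to give a well-defined $\Phi$ on $\mathcal{W}_-[n]$. Multilinearity in the colors is immediate from the definition. Invariance under the sphere relation and under non-separating neck-cuttings on a boundary-bearing component is built into the reduction, as the handle operator $\mathfrak{k}$ and counit $\varepsilon$ are used to encode exactly those moves. The delicate checks are the separating neck-cuttings: (a) a cut that separates a closed component off a boundary-bearing component produces two pieces whose colors are determined via $\Delta$, and one must verify this matches the normal form obtained by reducing the boundary-bearing component first; (b) a cut that separates a non-orientable closed component into two non-orientable closed components is precisely the Klein-bottle situation from the $n=0$ discussion, where the two different compressions of $P^2\sharp P^2$ agree modulo the relation $\Delta(v)=(\varepsilon\circ\mathfrak{k})(v)$ that defines the ideal in $(SV)_-$. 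Once these confluences are in place, compatibility with all remaining relations reduces, as in the $n=0$ paragraph, to repeated application of coassociativity and cocommutativity of $\Delta$, the counit identity $(\varepsilon\otimes\mathrm{Id})\circ\Delta=\mathrm{Id}$, and the defining relation of $(SV)_-$.

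Finally, the composition of the given epimorphism followed by $\Phi$ is the identity on the natural generators of $(SV)_-\otimes V^{\otimes n}$, since a disjoint union of $n$ colored disks bounding the $\alpha_i$ together with $r$ colored real projective planes is already in normal form; linearity extends this to all of $(SV)_-\otimes V^{\otimes n}$. The composition in the other order is the identity by the very definition of $\Phi$ as a reduction to normal form. Hence $\Phi$ is a two-sided inverse of the epimorphism, proving the proposition. I expect the main obstacle to be the well-definedness step, and within it the interaction between separating neck-cuttings that split closed non-orientable pieces off boundary-bearing pieces and the Klein-bottle relation in $(SV)_-$: this is precisely where the normal form might naively depend on the order of the reduction steps, and where the defining relation of $(SV)_-$ must be invoked in order to identify the two resulting elements.
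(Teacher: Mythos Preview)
Your proposal is correct and follows the same overall strategy as the paper: construct an inverse by reducing colored surfaces to a normal form (disks bounding the $\alpha_i$ together with closed pieces), then invoke the $n=0$ case for the closed part.

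The paper's execution is more economical, however. Instead of your multi-step reduction (first separate boundary circles, then kill genus on each bounded piece via $\mathfrak{k}$, then normalize the closed part), the paper performs a single canonical operation: cut along the curve parallel to each $\alpha_i$ given by the collar of $\alpha_i$ in the surface. This immediately yields $n$ disks $D_i$ bounding the $\alpha_i$ together with a closed colored surface, and the closed part is then sent to $(SV)_-$ via the already-established $n=0$ isomorphism. Because the collar-parallel cut involves no choices, the assignment is unambiguous on colored surfaces from the outset; compatibility with the Bar-Natan relations then reduces directly to the $n=0$ analysis (any relation applied to $S$ can be pushed past the collar cuts and becomes a relation among closed colored surfaces plus fixed disks). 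This bypasses the confluence checks you anticipate as the main obstacle in your approach---in particular the interaction between separating cuts on boundary-bearing components and the Klein-bottle relation---since all genus and non-orientability is shunted into the closed part in one stroke and handled there. Your route works, but the paper's collar trick is worth noting as a shortcut that makes the well-definedness nearly automatic.
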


\begin{proof}  The epimorphism is defined by mapping $w\otimes v_1\otimes \ldots \otimes v_n$ to the disjoint union of some closed surface determined by $w$ and the disk bounding $\alpha_i$ colored by $v_i$ for $i=1,\ldots r$. It is easy to see that this is an isomorphism.
In fact a homomorphism can be defined $\mathcal{W}_-[n]\rightarrow (SV)_-\otimes V^{\otimes n}$ by cutting along 
the parallel of each $\alpha_i$ defined by the collar of $\alpha_i$ in the bounding surface and then identifying the closed colored surfaces with elements of $(SV)_-$. 
\end{proof}

\begin{rem} (Orientations and Bar-Natan skein theory): For us it seemed natural to work in an orientable but unoriented setting. Of course it is possible to construct theories based on oriented surfaces (Bar-Natan relations preserve orientations). The main idea of Bar-Natan theory is to use compressions to simplify the objects of a topological category. The Bar-Natan relations are induced from orientable topological quantum field theory. In the case of orientable $2$-dimensional topological quantum field theory the topological objects are completely compressible. This makes a setting of orientable surfaces \textit{natural}, even in the embedded case. 
Often one can automatically work in the orientable setting like for closed surfaces embedded in $F\times I$ for $F$ an orientable surface. 
Care has to be taken when glueing because $3$-manifolds and surfaces could no longer be orientable after glueing. Throughout we will restrict the composition of morphisms correspondingly, if not stated otherwise.
In \cite{TT} Turaev and Turner study unoriented topological quantum field theory. 
In the $2$-dimensional case topological morphisms exist, which are Moebius bands bounding circles, and corresponding algebraic morphisms are introduced. There is corresponding topological compression relation, essentially because $P^2$ does not bound. But a disjoint union of two $P^2$'s bounds $P^2\times [0,1]$. One can develop a theory by adding a relation to the theory considering the disjoint union of two real projective planes compressible. This seems natural on one hand because of the following idea about compressions of Bar-Natan theory: Replacing an annulus by a disjoint union of two disks, or replacing a $2$-sphere by the empty surface, in each case the Euler characteristic decreases by $2$. In the same way, replacing a
disjoint union of two $P^2$'s by the empty surface also dcreases the Euler characteristic by $2$.
On the other hand the zero-bordism of the disjoint union of two $P^2$'s has the handle structure of attaching a $1$-handle giving the connected sum $P^2\sharp P^2$ followed by a $2$-handle (neck-cutting) and followed by a sphere compression. Thus from considering the \textit{handle structure} only the usual Bar-Natan relations seem more natural.   
In the framework of Turaev and Turner's unoriented topological quantum field theories the Frobenius algebra is extended by a suitable element $\theta $ satisfying various compatibilities with the Frobenius algebra structures.
The disjoint union of two $P^2$'s thus should correspond the \textit{extended Bar-Natan relation}, which deletes a 
disjoint union of two $P^2$'s by multiplication by $(\varepsilon \theta )^2\in R$. This easily extends to the embedded case by requiring $P^2\times I$ to be embedded. If we consider the module $\mathcal{W}^{ext.}$ defined by extended Bar-Natan relations the module $(SV)_-$ will be replaced by $V$ only.     
\end{rem}

\begin{exmp}  Consider a nontrivial knot $K$ in $S^3$. Let $M=S^3\setminus N(K)$, where $N(K)$ is an open tubular neighborhood. Let $\alpha =\emptyset$.  Consider the morphisms from a boundary parallel torus $S$ to the empty surface $\emptyset$. Then $F(S)=V$ and $F(\emptyset )=R$. But there are \textit{no} morphisms at all in $\mathcal{C}(M)$ from $S$ to $\emptyset$. In fact, because $S$ is incompressible the only morphisms from $S$ are trivial neck-cuttings leading to the disjoint union of $S$ and $2$-spheres. The $2$-spheres can be capped off by $3$-balls but this leads back to the original surface. 
So the only way to \textit{relate} $S$ with the empty surface is by first attaching $1$-handles until we get a surface $S'$ which is \textit{fully compressible} in the sense that it bounds a handlebody in $M\times I$. 
There exists a surface $S'$, which we even can assume connected, and two morphisms $S'\rightarrow S$ and $S'\rightarrow \emptyset$. Note that $F(S')\rightarrow F(S)$ is not invertible except in trivial cases. This is why the necessity of \textit{tunneling}  is picked up by the Bar-Natan module in this case, see \cite{K1} and \ref{sec=tunneling}. Note that in a sequence of $1$-handle and $2$-handle attachments we can always arrange that the $1$-handles precede the $2$-handles. In fact suppose that a $1$-handle follows a $2$-handle. The $2$-handle will replace an annulus by two disks. Then first move the attaching disks of the $1$-handle away from the two disks. Then perturb the $1$-handle to avoid the cocore of the $2$-handle. Now we can change the order up to isotopy. 
It is also standard to see that in each sequence of $1$-handle respectively $2$-handle attachments we can assume that the attaching disks of the $1$-handles respectively attaching annuli of the $2$-handles are disjoint. 
But there is a quite subtle difference between bounding a handlebody in $M$ respectively $M\times I$. (Construct a genus-$2$ handlebody embedded in $S^3$ knotted in such a way that it does not bound a handlebody on either side). If a connected surface $S$ is fully compressible we call $t(S)$ the minimal genus of a handlebody embedded in $M\times I$ bounding $S\subset M\times \{0\}$.
We call $t(S)$ the \textit{compression number} of $S$. In the situation above $t(S')\leq t(K)$ where $t(K)$ is the tunnel number of the knot. 
Note that all surfaces in this example are closed and embedded in $S^3$ and thus are orientable.
\end{exmp}

In general if $S\subset M$ is a closed separating surface in $M$ such then there is defined a nonnegative 
number $u(S)$, which is the minimal number
of $1$-handles one has to attach to $S$ such that the resulting surface bounds a disjoint union of handlebodies in $M\times I$. In the example above there 
are \textit{minimal} situations $W: S'\rightarrow S$ and $W': S'\rightarrow \emptyset$ where $F(W): V\rightarrow V$ is 
$\mathfrak{k}^{u(S)}$ and $F(W')=\varepsilon \circ \mathfrak{k}^{t(S')}$ with $t(S')=u(S)+1$ with $1$ being the genus of the boundary torus. 

\begin{exmp} \textbf{(a)} Let $\alpha =\emptyset$ and consider the functor on the set of morphisms $S\rightarrow \emptyset$ for a closed orientable surface $S$ embedded in a $3$-manifold $M$. Now consider a morphism $W$from $S$ to $\emptyset$, which we assume is a disjoint union of handlebodies in $M\times I$ bounding the components of $S\subset M\times \{0\}$. We should point out that it is not at all clear that such a morphism exists.  
Now the functor $F_V$ assigns to the morphism $W$ 
as above a linear homomorphism $V^{\otimes \underline{S}}\rightarrow R$ as follows. (Note that this morphism is completely determined by $S$.)  Suppose $|S|=k$ and choose an order of the components of $S$. Let $g_i$ denote the genus of the $i$-th component for $1\leq i\leq k$. Then $F(W)$ is the morphism which maps $v_1\otimes \ldots \otimes v_k$
to $\prod_{1\leq i \leq  k}\varepsilon \circ \mathfrak{k}^{g_i}$, where the product indicates application of the product map $R\otimes \ldots \otimes R\rightarrow R$.   

\vskip 0.05in

\noindent \textbf{(b):} Recall that attaching a $2$-handle at most increases the number of components of a surface. Let $S\subset M$ be a connected surface 
and let $S'\subset M$ be a surface with $|S'|=k$ and an ordering of the components of $S'$ chosen. 
(Note that if $S$ is orientable then $S'$ is orientable.)
We consider a connected morphism $W: S\rightarrow S'$ with only index $2$ critical points. Then $F(W): V\rightarrow V^{\otimes k}$ 
is defined as follows:  
Let $r$ be the difference between the sum of the genera of the components of $S'$ and the genus of $S$.
(Define the genus of a connected closed surface $S$ to be $g$ if $S$ is diffeomorphic to $P^2\sharp T^g$ or $T^g$, where $T$ is the torus surface. Define the genus of the Klein bottle to be $1$.) Then $F(W)=d_k\circ \mathfrak{k}^r$. 
This follows easily from properties of the coproduct and counit in a commutative Frobenius algebra. In fact recall that $\Delta $ is a map of algebras, where we can consider $V\otimes V$ both as a left or right $V$-algebra. 
Thus the applications of $\mathfrak{k}$ can be arranged first (recall the corresponding identity in weak Frobenius algebras). Also by iterated applications of the coassociativity property of $\Delta $: $(\textrm{Id}\otimes \Delta)\circ \Delta =(\Delta \otimes \textrm{Id})\circ \Delta$ the remaining homomorphisms have the form $d_k$.  More generally consider a morphism $W: S\rightarrow S'$
with only $2$-handles attached. Then each component of $S$ gives rise to a component of $W$ and determines a set of components of $S'$. 
This decomposition of the set of components of $S'$ is called a \textit{partition} of $S'$.
The induced homomorphism $V^{\otimes \underline{S}}\rightarrow V^{\otimes \underline{S}'}$ now is defined from the components of $W$ respectively $S$ as before using the 
tensor product structure. Note that a general morphism in $\mathcal{C}(M,\alpha )$ can be replaced by a morphism with the $2$-handles preceding the $3$-handles. 
(If also $3$-handles are attached the induced morphism $F(W)$ will be the linear morphism defined from the $2$-handles composed with morphisms of the form $\textrm{Id}\otimes \varepsilon \otimes \textrm{Id}$
with $\varepsilon : V\rightarrow R$ corresponding to $2$-sphere components bounding the $3$-handles which are attached.)  
Note that the homomorphism $F(W)$ only depends only on the partition of $S'$ corresponding to the components of $W$, thus it depends on the morphism $W$ in $\mathcal{C}(M,\alpha )$ in a very weak way. 
 
\vskip .05in
 
\noindent \textbf{(c):} The arguments in \textbf{(a)} and \textbf{(b)} above also apply in the case $\alpha \neq \emptyset$. Just define the genus of a surface with boundary 
by the genus of the surface, which results by capping off all boundary components by disks. The description above still holds because the manifolds $W$ are
the products $\alpha \times [0,1]$ in the intersection with $\partial M\times [0,1]$. 

\vskip .05in

\noindent \textbf{(d):} Consider $M=S^1\times D^2$ and $\alpha $ a collection of $2n$ longitudes with some orientations such that the total homology class in $M$ is trivial. Then each crossingless matching (see \cite{R}) determines  a module $V(\varphi )$ which is realized by a bounding surface. In fact all these surfaces are incompressible with $n$ components. The crossingless matching is precisely the mapping $\underline{\alpha }\rightarrow \underline{S}$, where two components of $\alpha $ are matched when they map to the same element of $\underline{S}$. The structural maps 
$\underline{\alpha }\rightarrow \underline{S}$ just can be considered as generalizations of the crossingless matchings.
Note that by forming possible connected sums we find surfaces with modules $V(\varphi ')$ with smaller image. But homomorphisms in the category $\mathcal{V}(\alpha )$ can be applied mapping $V(\varphi ')$ to $V(\varphi )$.     
Note that closed components can enlarge the image but because those components are completely compressible in $M$, but again there will be morphisms from the corresponding modules to a module $V(\varphi )$ defined from a crossingless matching.  
\end{exmp}

\vskip .1in

Next we will briefly discuss the structure of the bordism categories considered up to this point and their relation to each other.
Of course, a presentation of the full category of $3$-manifold triads embedded in $M\times [-1,1]$ can be constructed by first adding the reverse arrows
to the directed graph corresponding to elementary critical points of index $0$ and $1$ bordisms. The we have to add relations, first corresponding to reverse relations between $0$ and $1$-handles. Finally we need morphisms taking into account the deletion and insertion of $1$/$2$-handle pairs, and of course possibly changing arbitrarily the order of handles. 

It is easy to see that the only invertible morphisms in both categories $\mathcal{C}$ or $\mathcal{M}$ are the isotopy morphisms. The elementary morphisms admitting left inverses are those corresponding to $k$-handles, which admit cancelling $k+1$-handles, $k=0,1,2$. For a $0$-handle this means that that it is attached to a nonempty manifold; for a $1$-handle it means that the handle is trivial, i.\ e.\ the parallel of its core in the boundary of the codomain surface bounds a disk in $M$ intersecting the codomain surface only in the curve;
for a $2$-handle this means that the attaching curve is inessential on the domain surface and the $2$-sphere formed by a disk on the surface and the attaching disk is a $2$-sphere bounded by a $3$-ball in $M$ intersecting the codomain surface only in the $2$-sphere.  
Consider the quotient category $\mathcal{H}$ of $\mathcal{M}$, which is defined by introducing the equivalence relation on morphisms that two morphisms are equivalent if the corresponding $3$-dimensional submanifolds (with corners) of $M\times I$ are bordant relative to the boundary in $(M\times I)\times I$. It is not hard to see that each morphism in this category is invertible with the inverse represented by the reverse manifold. (Here is the idea: There is a Pontrjagin map from the set of maps $M\rightarrow \mathbb{RP}^{\infty}$,
mapping the boundary in a fixed way, into the set of submanifolds of $M$ bounding $\alpha $ such that homotopy classes of paths in the space of maps $M\rightarrow \mathbb{RP}^{\infty}$ correspond to bordism classes of submanifolds in $M\times I$. But for each path there is the reverse path and composition 
of both paths is homotopic to the constant path.)

\vskip .05in

\noindent \textbf{Conjecture.} The category $\mathcal{H}$ is equivalent to the category of fractions of the category $\mathcal{C}$, i.\ e.\ the localization at the set of all morphisms, see \cite{W}. 

\vskip .05in

The set of isomorphism classes of this category is in one-to-one correspondence with the set $h_2(M,\alpha ;\mathbb{Z}_2)$ defined by $\partial^{-1}[\alpha ]\subset H_2(M,\alpha ;\mathbb{Z}_2)$, where $\partial :H_2(M,\alpha ;\mathbb{Z}_2)\rightarrow H_1(\alpha ;\mathbb{Z}_2)$ is the boundary operator of the exact sequence of the pair and $[\alpha ]$ is the mod $2$ fundamental class of $\alpha $. If $\alpha =\emptyset$ then $h_2(M,\alpha ;\mathbb{Z}_2)=H_2(M;\mathbb{Z}_2)$. There are the obvious forget functors $\mathcal{M}\rightarrow \mathcal{H}$ and $\mathcal{C}\rightarrow \mathcal{H}$. Note that the set of components of $\mathcal{C}(M,\alpha )$, see \cite{M} page 88 for the definition, is in one-to-one correspondence with the set $h_2(M,\alpha ;\mathbb{Z}_2)$. Correspondingly the Bar-Natan skein modules are graded over $h_2(M,\alpha ;\mathbb{Z}_2)$. 

\vskip .05in

\begin{rem}  \textbf{(a):} The Khovanov construction on thickened surfaces naturally leads to non-orientable surfaces as first observed by Manturov \cite{Ma}.  
 
\noindent \textbf{(b):} We have seen in Example \ref{example=Khovanov} that Poincare duality (cap product with the fundamental class) induces an $R$-isomorphism of Frobenius algebras defined by cohomology respectively homology.
It has been proved in \cite{K1} that this isomorphism $\mathcal{P}$ induces an isomorphism $\mathcal{P}_*$ of Bar-Natan modules
$V:=H^*(Y)\rightarrow H_*(Y)=:V'$.
It is easy to see that it maps for given $(M,\alpha )$ the $(V^{\otimes \underline{\alpha }})$-module
$\mathcal{W}(M,\alpha ;V)$ isomorphically onto the the $(V'^{\otimes \underline{\alpha }})$-module $\mathcal{W}(M,\alpha ,V')$ compatible with the module actions, i.\ e.\ $\mathcal{P}_*(w\cdot x)=\mathcal{P}(w)\cdot \mathcal{P}_*(x)$ for $w\in \mathcal{V}^{\otimes \underline{\alpha }}$ and $x\in \mathcal{W}(M,\alpha ;V)$. 
\end{rem}

\vskip .1in

Note that given any pairs $(M,\alpha )$ and $(N,\beta )$ disjoint manifolds we can form their \textit{disjoint} union $(M\cup N, \alpha \cup \beta )$.
The following result is immediate from the definitions and relates the \textit{monoidal} structures on the topological side and algebraic side.

\vskip .05in

\begin{prop} There is a natural isomorphism of categories
$$\mathcal{C}(N,\beta )\times \mathcal{C}(M,\alpha )\rightarrow \mathcal{C}(N\cup M,\beta \cup \alpha )$$
such that the following diagram of functors commutes:
$$
\begin{CD}
\mathcal{C}(N, \beta )\times \mathcal{C}(M,\alpha )@>>>\mathcal{C}(N\cup M,\beta \cup \alpha) \\
@V{F_V\times F_V }VV @V{F_V }VV \\
\mathcal{V}(\beta )\times \mathcal{V}(\alpha ) @>\otimes >>\mathcal{V}(\beta \cup \alpha)
\end{CD}
$$ 
In particular we also have 
$$\mathcal{W}(N\cup M,\beta \cup \alpha)\cong \mathcal{W}(N,\beta)\otimes \mathcal{W}(M,\alpha ).$$
\end{prop}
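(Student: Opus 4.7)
The plan is to define the functor $\Phi \colon \mathcal{C}(N,\beta) \times \mathcal{C}(M,\alpha) \to \mathcal{C}(N\cup M, \beta\cup\alpha)$ by disjoint union on both objects and morphisms: $\Phi(S_N, S_M) := S_N \cup S_M$, and $\Phi(W_N, W_M) := W_N \cup W_M \subset (N\cup M) \times I$. The inverse uses the crucial topological splitting $(N\cup M)\times I = (N\times I) \cup (M\times I)$, so that any properly embedded surface $S \subset N\cup M$ bounding $\beta\cup\alpha$ decomposes canonically as $(S\cap N) \cup (S\cap M)$, and likewise any compression bordism $W \subset (N\cup M)\times I$ with appropriate boundary decomposes canonically as $(W\cap (N\times I)) \cup (W\cap (M\times I))$. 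Orientability is automatic for disjoint unions, so no glueing restriction is violated.

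First I would verify that $\Phi$ is well-defined at the level of $\mathcal{C}_0(N,\beta) \times \mathcal{C}_0(M,\alpha) \to \mathcal{C}_0(N\cup M, \beta\cup\alpha)$. The key point is that the restriction of the projection $(N\cup M)\times I \to I$ to a disjoint union $W_N \cup W_M$ is generic Morse with only index $2$ and $3$ critical points, precisely because each piece is; and a Heegaard isotopy of $(N\cup M)\times I$ preserving the $2$-$3$ handle structure restricts to Heegaard isotopies of each $W_N$ and $W_M$. The inverse map is then clear from the disjointness. Passage to skeleton categories requires only that one choose representative embeddings in $\mathcal{C}(N\cup M, \beta\cup\alpha)$ to be disjoint unions of representatives of the factor categories; with this convention, $\Phi$ becomes an honest isomorphism of (skeleton) categories, with the self-isotopy morphisms and the explicit comparison isotopies splitting compatibly.

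Next I would check commutativity of the diagram. On objects: $F_V(S_N \cup S_M) = V^{\otimes \underline{S_N \cup S_M}} \cong V^{\otimes \underline{S_N}} \otimes V^{\otimes \underline{S_M}} = F_V(S_N) \otimes F_V(S_M)$, with module structure over $V^{\otimes \underline{\beta \cup \alpha}} \cong V^{\otimes \underline{\beta}} \otimes V^{\otimes \underline{\alpha}}$ matching the tensor product module structure of $\mathcal{V}(\beta)\times\mathcal{V}(\alpha) \to \mathcal{V}(\beta\cup\alpha)$, since each boundary component of $\beta$ is routed by the structural map $\underline{\beta\cup\alpha} \to \underline{S_N\cup S_M}$ into the $N$-side tensor factors, and likewise for $\alpha$. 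On morphisms it suffices to check the elementary generators (neck cuttings, sphere compressions, self-isotopies). An elementary morphism in one factor, say $W_N \colon S_N \to S_N'$, maps under $\Phi$ to $W_N \cup (\mathrm{Id}_{S_M} \times I)$, which again is elementary and affects only the tensor factors coming from $\underline{S_N}$; the induced linear map is therefore $F_V(W_N) \otimes \mathrm{Id}$, matching the definition of $\otimes$ on $\mathcal{V}(\beta)\times \mathcal{V}(\alpha)$.

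Finally, the Bar-Natan module isomorphism follows from Theorem \ref{thm=main} together with the standard fact that the colimit of a functor into an $R$-module category commutes with tensor products over $R$: for functors $F \colon \mathcal{C} \to R\text{-Mod}$ and $G \colon \mathcal{D} \to R\text{-Mod}$, one has $\colim_{\mathcal{C}\times\mathcal{D}}(F \otimes G) \cong (\colim_{\mathcal{C}} F) \otimes (\colim_{\mathcal{D}} G)$. Applying this with $F = \hat F_V$ on each factor, and using the commutative diagram above, yields
\[
\mathcal{W}(N\cup M, \beta\cup\alpha) = \colim \hat F_V \circ \Phi \cong \colim(\hat F_V \otimes \hat F_V) \cong \mathcal{W}(N,\beta) \otimes \mathcal{W}(M,\alpha),
\]
as $V^{\otimes \underline{\beta \cup \alpha}}$-modules. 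The main obstacle I anticipate is purely bookkeeping: tracking the skeleton representatives and the explicit comparison isotopies through the disjoint-union construction, and verifying that the completion $\overline{\mathcal{V}(\beta\cup\alpha)}$ indeed inherits the tensor structure from $\overline{\mathcal{V}(\beta)}$ and $\overline{\mathcal{V}(\alpha)}$ in a way compatible with the colimit commutation statement above.
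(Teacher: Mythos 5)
Your proposal is correct and is essentially the argument the paper has in mind: the paper states this proposition as ``immediate from the definitions,'' and your disjoint-union functor, the elementary-morphism check against the tensor functor $\otimes:\mathcal{V}(\beta)\times\mathcal{V}(\alpha)\to\mathcal{V}(\beta\cup\alpha)$, and the colimit--tensor (Fubini plus left-adjointness of $-\otimes_R X$) step are exactly what that remark unpacks to. No gaps; the points you flag (skeleton representatives, simultaneous critical levels, completion compatibility) are routine and do not affect the argument.
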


\section{$2$-categories in Bar-Natan skein theory}\label{sec=bicategory}

We discuss the categorical set-up from the previous sections following the standard higher category and extended topological quantum field theory framework, see \cite{L}. 
For definitions of $2$-categories and bicategories (weak $2$-categories) we refer to \cite{Bo}.
Recall that a $2$-category is a category whose morphism sets are the sets of objects of a small category, or a category enriched over the set of categories. In particular the composition of the category is enriched by making the composition operations become functors. The morphisms between morphisms, i.\ e.\ the morphisms of the morphism categories, are called \textit{$2$-morphisms}.
In a bicategory the composition functors are not necessarily strictly associative with the associativity replaced by a weak version involving isomorphisms satisfying coherence conditions. Thus each $2$-category is a bicategory with trivial associativity isomorphisms. 

\vskip .05in

We begin by first constructing a combinatorial category, which models the category of $1$-manifolds bounding $2$-manifolds, up to orientability. It will then be shown that there exists a natural $2$-categorical extension based on the idea of \textit{compression}. The combinatorial category is related with ideas of Kerler \cite{Ker}, who constructed a corresponding graph category. This part is not necessary for our constructions but provides a clear intermediate step of the passage from the geometric bicategory to the algebraic $2$-category. It also defines combinatorial morphisms precisely corresponding to the Frobenius algebra morphisms constructed in section \ref{sec=algebra}.  
Measuring $2$-morphisms using appropiate linear functors is the basic idea of the Bar-Natan abstract skein theory. At the same time the $2$-categorical set-up allows an immediate way of generalizing to a codimension-one embedded situation. We also discuss how the Bar-Natan approach allows skein calculation of $2$-dimensional topological quantum field theory functors, see \cite{K1}.

\vskip .05in

Here is the description how the geometric category becomes combinatorial.
Firstly, assign to each $1$-manifold $\alpha $ its set of components $\underline{\alpha }$. Then assign to each surface $S$ bounding 
$\alpha $ the map $\varphi: \underline{\alpha }\rightarrow \underline{S}$. Finally assign to each component of $S$ its genus, i.\ e.\ the genus of the closed orientable surface that results by capping off all boundary circles, thus
a map $\psi :\underline{S}\rightarrow \mathbb{N}=\{0,1,2,\ldots \}$.
Obviously this data completely determines the surface up to orientability. If we distinguish ingoing and outgoing circles, i.\ e.\ we have 
$\partial S=\alpha_2\cup \alpha_1$ a disjoint union then assign the maps $\varphi_i: \underline{\alpha}_i\rightarrow \underline{S}$ for $i=1,2$. Next we describe the glueing of surfaces in the combinatorial model. Let 
$\phi '=(\varphi_1,\varphi_2,\psi ')$ be a morphism $J_1\rightarrow J_2$, i.\ e.\ $\varphi_i: J_i\rightarrow K'$ for a finite set $K'$. Let $\phi ''=(\varphi_2',\varphi_3,\psi '')$ be a morphism $J_2\rightarrow J_3$, i.\ e.\ $\varphi_2': J_2\rightarrow K''$ and $\varphi_3: J_3\rightarrow K''$ for a finite set $K''$. Then we define the morphism $\phi :=\phi ''\circ \phi '$ from $J_1$ to $J_3$. Let $\phi =(\varphi_1',\varphi_3',\psi ')$. 
Firstly define the finite set $K$ to be the quotient of the disjoint union
$K'\cup K''$ (if the sets are not disjoint consider $K'\times \{0\}$ and $K''\times \{1\}$) by the equivalence relation $\varphi_2(j)=\varphi_2'(j)$ for all $j\in J_2$. Then define $\varphi_i'$ for $i=1$ respectively $i=3$ by the 
composition of the inclusions of $K'$ respectively $K''$ into $K$ with the natural projection  $K'\cup K''\rightarrow K$. Next for each $k\in K$ consider the preimages $\{k_1',\ldots k_r'\}$ respectively $\{k_1'',\ldots ,k_{\ell}''\}$ in $K'$ respectively $K''$ under the above maps. Define $\psi (k)=\psi'(k_1)+\ldots +\psi '(k_r)+\psi ''(k_1'')+\ldots +\psi ''(k_{\ell}'')$. Note that this models precisely how the genera of components that are glued together add up.
The closed components of a surface correspond to elements in $K=\underline{S}$, which are not in $\varphi_1(\underline{\alpha_1})\cup \varphi_2(\underline{\alpha }_2)$.

\vskip .05in

The idea will be to study sequences of compressions of $2$-manifolds and to measure the compressions. The point is 
that this can easily be generalized to $3$-manifolds, with the compressions taking place in the $3$-manifold.
The right combinatorial set-up is that of a $2$-category. 
Let us first describe this for our combinatorial model. 

\vskip .05in

Let $\mathcal{F}_2$ denote the $2$-category with objects finite sets $J$ and defined as follows.
Let $\textrm{Hom}(J_1,J_2)$, the collection of morphisms $\phi: J_1\rightarrow J_2$, be 
the collection of triples $(\varphi_1,\varphi_2,\psi )$ as defined above. We defined the composition of morphisms
$$c(J_1,J_2,J_3): \textrm{Hom}(J_2,J_3)\times \textrm{Hom}(J_1,J_2)\rightarrow \textrm{Hom}(J_1,J_3)$$ as
above. For each finite set $J$, $\textrm{Hom}(J,J)$ contains the identity morphism
$(\varphi_1,\varphi_2,\psi )$ defined by $\varphi_1: J\times \{0\}\rightarrow J$ and 
$\varphi_2: J\times \{1\}\rightarrow J$ are defined by the natural identifications, and $\psi (j)=0$ for each
$j\in J$ (this corresponds to the image of a cylinder). 
We now extend the sets $\textrm{Hom}(J_1,J_2)$ to categories such that composition becomes a functor. 
For this we need to define a set of morphisms and a composition operation $\phi \mapsto \phi '$
for each set $\textrm{Hom}(J_1,J_2)$. This is done by first defining four types of \textit{elementary homomorphisms}, which  together with the identity morphism generate the corresponding  
set of $2$-morphisms by compositions of elementary morphisms, where associativity is imposed.  
The actual collections of morphisms then are defined by imposing relations on the collection of morphisms generated by compositions of the elementary ones. 
\textbf{(i):} Let $\varphi_i :J_i\rightarrow K$ for $i=1,2$ and $\psi : K\rightarrow \mathbb{N}$. 
This defines $\phi :=(\varphi_1,\varphi_2,\psi)$.
Let $k\in K$ and $\varphi =\varphi_1\cup \varphi_2: J_1\cup J_2\rightarrow K$, where we assume $J_1,J_2$ disjoint. Let $\varphi^{-1}(\{k\})=I_1\cup I_2$ be a disjoint union. Let $K'=(K\setminus \{k\})\cup \{k_1,k_2\}$ with $k_i\notin K\setminus \{k\}$ for $i=1,2$. Define $\varphi ': J_1\cup J_2\rightarrow K'$ by
by $\varphi $ on $(J_1\cup J_2\setminus \varphi^{-1}(\{k\})$ and map $I_i$ to $k_i$ for $i=1,2$. 
Then define $\varphi_i '$ by $\varphi '|J_i$ for $i=1,2$.
Finally define $\psi ': K'\rightarrow \mathbb{N}$ by $\psi $ on $K\setminus \{k\}$ and by $\psi '(k_1)=g_1$ and $\psi '(k_2)=g_2$ where $g_1,g_2\in \mathbb{N}$ such that $\psi (k)=g_1+g_2$. Then $\phi '=(\varphi_1',\varphi_2,\psi ')$
is an element of $\textrm{Hom}(J_1,J_2)$ and $\phi \mapsto \phi '$ defines an elementary morphism
denoted $\Delta_k$. Note that $\Delta_k$ is only defined on $\phi $ with $\phi_i$ mapping into $K$ and $\psi $ defined on $K$, with $k\in K$. 
\textbf{(ii):} Define an elementary morphism by mapping $(\varphi_1,\varphi_2,\psi )$ with $\psi (k)=g>0$ for some $k\in K$ to $(\varphi_1,\varphi_2,\psi ')$ where $\psi '$ maps by $\psi $ for all $k'\neq k$ and 
$\psi '(k)=\psi (k)-1$. Call this morphism $\mathfrak{k}_k$
\textbf{(iii):} Suppose that $\psi =(\varphi_1,\varphi_2,\psi )$ is such that for some $k\notin \varphi_1(J_1)\cup \varphi_2(J_2)$ we have $\psi (k)=0$. Then define $\psi '=(\varphi_1',\varphi_2',\psi ')$ by the same maps but change the codomain to $K\setminus \{k\}$. We call this morphism $\varepsilon_k$.
\textbf{(iv):} Finally for $\phi =(\varphi_1,\varphi_2,\psi )$ and $k_1,k_2\notin \varphi_i(J_i)$ for $i=1,2$ but 
$\psi (k_1)=\psi (k_2)$ we introduce a \textit{permutation morphism} $\pi_{k_1,k_2}$, which maps $\phi $ to itself.
The relations we impose on the composition of morphisms are the following commutativity relations, whenever composition is defined:
$$\mathfrak{k}_{k_2}\circ \Delta_k=\mathfrak{k}_{k_1}\circ \Delta_k=\Delta_k\circ \mathfrak{k}_k$$
and $\Delta_k$ commutes with $\mathfrak{k}_{k'}$ in all cases where $k$ and $k'$ are not related as described in \textbf{(i)}. Next we want 
$$\varepsilon_{k_1}\circ \Delta_k=\textrm{Id} \quad \textrm{respectively} \quad \varepsilon_{k_2}\circ \Delta_k=\textrm{Id},$$
whenever those compositions are defined. 
Finally we want the permutation morphisms to commute with all the above morphisms whenever compositions are defined.   

\vskip .1in  
  
Now we extend the composition operations $c(J_1,J_2,J_3)$ to functors of categories.
Given a morphism $\lambda_1: \phi_1\mapsto \phi_1'$ of the category $\textrm{Hom}(J_1,J_2)$ and a morphism
$\lambda_2: \phi_2\mapsto \phi_2'$ of the category $\textrm{Hom}(J_2,J_3)$ we have to define a composition being a 
morphism of the category $\textrm{Hom}(J_1,J_3)$. It suffices to do this for elementary morphisms and one of the two morphisms being the identity morphisms by using
$\lambda_2\circ \lambda_1:=(\lambda_2\circ \textrm{Id})\circ (\textrm{Id}\circ \lambda_1)$, where the middle composition is the composition in $\textrm{Hom}(J_1,J_3)$ defined above.
We consider one case, the other cases are discussed similarly. (It is almost easier at this point to use that the combinatorial model precisely corresponds to how the glueing of surfaces relates with Bar-Natan relations in order to understand the definition).
Let $\Delta_k: \phi \rightarrow \phi'$ and $\textrm{Id}: \widetilde{\phi }\rightarrow \widetilde{\phi }$. Then we want to define $\textrm{Id}\circ \Delta_k: \widetilde{\phi }\circ \phi \rightarrow \widetilde{\phi}\circ \phi '$.
Let $\phi =(\varphi_1,\varphi_2,\psi )$ and $\phi '=(\varphi_1',\varphi_2,\psi ')$ be as above in (i).
Let $\widetilde{\phi }=(\widetilde{\varphi }_2,\varphi_3,\widetilde{\psi })$.
Since we assume that compositions are defined both $\varphi_2, \widetilde{\varphi }_2$ have domain the same set $J_2$. 
The definition will depend on how $\widetilde{\varphi }_2$ is acting on the sets $I_1\cap J_2$ and $I_2\cap J_2$ where 
$I_1\cup I_2=\varphi ^{-1}(k)$ and $\varphi : J_1\cup J_2\rightarrow K$. Recall that $K'=K\setminus \{k_1,k_2\}$.
Assume that $\widetilde{\varphi }_2(j_1)=\widetilde{\varphi }_2(j_2)=\widetilde{k}\in \widetilde{K}$ for some $j_1\in I_1\cap J_2$ and $j_2\in I_2\cap J_2$. Then in forming the composition of $\phi $ and $\widetilde{\phi }$ respectively 
$\phi '$ and $\widetilde{\phi }$ we form the quotient of $K$ and $\widetilde{K}$ respectively $K'$ and $\widetilde{K}$ by identifying $\varphi _2(j)$ with $\widetilde{\varphi }_2(j)$ respectively $\varphi_2'(j)$ and $\widetilde{\varphi }_2(j)$ for all $j\in J_2$. Then $\textrm{Id}\circ \Delta_k$ is defined by $\mathfrak{k}_{k''}$ where $k''$ is the element in the quotient corresponding to $\widetilde{k}$. 
If $\widetilde{\varphi }_2(j_1)\neq \widetilde{\varphi }_2(j_2$ for all $j_i\in I_i\cap J_2$ then let $k_1'',k_2''$ be the images of $k_1,k_2$ in the quotient of $K'\cup \widetilde{K}$ defined by the glueing. In this case 
$\textrm{Id}\circ \Delta_k$ is defined by $\Delta_{k'}$ with $k'$ the image of $k$ in the quotient of $K\cup \widetilde{K}$ defined by the glueing, and the quotient of $K'\cup \widetilde{K}$ corresponding to the glueing is formed from the quotient of $K\cup \widetilde{K}$ by substituting $k'$ by $\{k_1'',k_2''\}$.  
Similarly there are definitions in all other cases.
It is important to notice that the functoriality of the definition depends on the relations described above between the compositions of elementary morphisms in an essential way.
The resulting composition functors are associative and thus $\mathcal{F}_2$ is a $2$-category.    
(Note that our morphism collections $\textrm{Hom}(J_1,J_2)$ are not small categories. But this can easily be adjusted by working in a suitable universe of finite sets. One should start from a model where the finite sets are sets of components of closed $1$-manifolds and orientable $2$-manifolds embedded in a Euclidean space of high dimension.)

\vskip .1in

Next we define, using the constructions in section \ref{sec=algebra}, a linear $2$-category $\mathcal{V}_2$. This $2$-category will be a kind of natural linear representation category of the combinatorial $2$-category defined above. 
The objects of $\mathcal{V}_2$ are $R$-modules $V^{\otimes J}$ for $J$ finite sets. The morphism categories
will be categories  
$\textrm{Hom}(V^{\otimes J_1},V^{\otimes J_2})=\mathcal{V}(J_1,J_2)$ with objects given by $V^{\otimes J_1}\otimes V^{\otimes J_2}$-bimodules 
and morphisms between those modules generated by $\Delta $ ,$\mathfrak{k}$, $\varepsilon $ and permutations as defined in section\ref{sec=algebra}. Theorem \ref{thm=glueing functor} can now be reformulated saying that with the morphisms defined in this way, composition is a functor:
$$\textrm{Hom}(V^{\otimes J_2},V^{\otimes J_3})\times \textrm{Hom}(V^{\otimes J_1},V^{\otimes J_2})\rightarrow \textrm{Hom}(V^{\otimes J_1},V^{\otimes J_3}).$$

\vskip .01in

The following is immediate from the definitions. It shows how the algebraic category defined from the Frobenius algebra 
is just an algebraic image of the combinatorial category above. 

\vskip .1in

\begin{prop} There is the $2$-functor
$$\mathcal{F}_2\rightarrow \mathcal{V}_2,$$
defined by mapping finite sets $J$ to modules $V^{\otimes J}$, 
morphisms $(\varphi_1,\varphi_2,\psi )$ with $\varphi_i: J_i\rightarrow K$ to $V^{\otimes J_1}\otimes V^{\otimes J_2}$-bimodules $V^{\otimes K}$, and mapping the $2$-morphisms according to their names in the obvious way.    
\end{prop}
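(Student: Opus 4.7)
The plan is to verify the 2-functor on each layer of structure---objects, 1-morphisms, 2-morphisms, identities, and composition---and to observe that each combinatorial relation in $\mathcal{F}_2$ has been specifically engineered to correspond to a Frobenius algebra identity already proved in Section \ref{sec=algebra}.

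First, on objects the assignment $J\mapsto V^{\otimes J}$ is unambiguous by the categorical tensor product convention of \cite{DM}. On 1-morphisms, a triple $(\varphi_1,\varphi_2,\psi)$ with $\varphi_i:J_i\to K$ is sent to the bimodule $V(\varphi_1,\varphi_2)=V^{\otimes K}$ of Section \ref{sec=algebra}; the genus function $\psi$ plays no role at this level because the bimodule only remembers the set of components, and $\psi$ will instead contribute via the 2-morphism $\mathfrak{k}$ during factorizations of 2-morphisms. The identity 1-morphism on $J$ (for which $\psi\equiv 0$) is sent to $V^{\otimes J}$ viewed as the standard $V^{\otimes J}$-bimodule via $\mathrm{Id}$ on both sides, which is the identity in $\mathcal{V}_2$.

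Second, assign each elementary 2-morphism to its namesake: $\Delta_k$ to the type-(i) map applying $\Delta$ in the $k$-th tensor slot, $\mathfrak{k}_k$ to type (ii), $\varepsilon_k$ to type (iii), and $\pi_{k_1,k_2}$ to type (iv). Then check every relation imposed on compositions of elementary morphisms in $\mathcal{F}_2$. The identities $\mathfrak{k}_{k_1}\circ \Delta_k=\mathfrak{k}_{k_2}\circ \Delta_k=\Delta_k\circ\mathfrak{k}_k$ follow from $(\mathfrak{k}\otimes\mathrm{Id})\circ\Delta=(\mathrm{Id}\otimes\mathfrak{k})\circ\Delta=\Delta\circ\mathfrak{k}$, the first equality being cocommutativity and the second being the defining property of the handle operator $\mathfrak{k}=m\circ\Delta(1)$ combined with the bimodule property of $\Delta$. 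The identities $\varepsilon_{k_i}\circ\Delta_k=\mathrm{Id}$ are the counit axiom $(\varepsilon\otimes\mathrm{Id})\circ\Delta=\mathrm{Id}=(\mathrm{Id}\otimes\varepsilon)\circ\Delta$. Commutativity of $\Delta_k$, $\mathfrak{k}_{k'}$, $\varepsilon_{k'}$ at distinct tensor factors is immediate from the tensor product structure, and commutativity with permutation morphisms follows from the coherence of the categorical tensor product together with cocommutativity of $\Delta$.

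Third, verify that horizontal composition is preserved. Given composable 1-morphisms $(\varphi_1,\varphi_2,\psi')$ and $(\varphi_2',\varphi_3,\psi'')$, the combinatorial composition forms $K=(K'\cup K'')/{\sim}$ with $\varphi_2(j)\sim\varphi_2'(j)$, and the genera of glued components add. The algebraic composition is $\mathfrak{g}(V(\varphi_2',\varphi_3),V(\varphi_1,\varphi_2))=V(\varphi,\varphi')$ by Theorem \ref{thm=glueing functor}, built from the same equivalence relation. Hence the 1-morphism level matches. The main obstacle---which I expect to be the most delicate step---is the compatibility of horizontal with vertical composition of 2-morphisms, i.e.\ that the assignment on 2-morphisms respects the functoriality of composition in both $\mathcal{F}_2$ and $\mathcal{V}_2$. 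But the case analysis used to define $\mathrm{Id}\circ\Delta_k$ in $\mathcal{F}_2$---distinguishing whether $\widetilde{\varphi}_2$ merges a pair $j_1\in I_1\cap J_2$ and $j_2\in I_2\cap J_2$---was engineered to mirror the algebraic identities $\Delta\circ m=(m\otimes\mathrm{Id})\circ(\mathrm{Id}\otimes\Delta)$ (bimodule property) and $\mathfrak{k}\circ m=m\circ(\mathfrak{k}\otimes\mathrm{Id})$ used in the proof of Theorem \ref{thm=glueing functor}. Thus the verification reduces to checking each elementary 2-morphism against the identity and each of the elementary types (i)--(iv), and noting that the combinatorial output of the case split matches precisely the algebraic output produced by $\mathfrak{g}$. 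This is tedious bookkeeping but purely mechanical once Theorem \ref{thm=glueing functor} is invoked.
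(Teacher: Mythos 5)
Your verification is correct and is exactly what the paper intends: the paper states this Proposition without proof, declaring it ``immediate from the definitions,'' since the relations and composition rules of $\mathcal{F}_2$ were built to mirror the Frobenius identities and the glueing functor of Theorem \ref{thm=glueing functor}. Your layer-by-layer check (objects, elementary $2$-morphisms by name, the relations via cocommutativity, the counit axiom and the handle-operator identity, and horizontal composition via Theorem \ref{thm=glueing functor}) is precisely the routine bookkeeping the paper suppresses, so there is nothing to add.
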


\vskip .1in

Now we briefly describe the basic \textit{cylindrical compression bicategory} $\mathcal{C}_2^{\textrm{cyl.}}$ of codimension $1$ manifold pairs.
The objects of this category are representing pairs $(F,\alpha )$ where $\alpha $ is a $1$-dimensional submanifold of a closed surface $F$, one pair for each diffeomorphism class of pairs. The morphism categories $\textrm{Hom}((F_1,\alpha_1),(F_2,\alpha_2))$ will have collections of objects defined by $3$-manifolds $M$ with $\partial M=F_1\times \{0\}\cup F_2\times \{1\}$ together with 
representatives of isotopy classes of properly embedded surfaces $S\subset M$ such that $\partial S=\alpha_1\times \{0\} \cup \alpha_2\times \{1\}$. Again this will not define small categories. But it is possible to consider $3$-manifolds embedded in 
sufficiently high dimensional Euclidean spaces instead of abstract $3$-manifolds. We omit some of the technical difficulties involved here. The morphisms of the category $\textrm{Hom}((F_1,\alpha_1),(F_2,\alpha_2))$ are 
essentially Heegaard-isotopy classes relative to the boundary of compression $3$-manifolds in $M\times I$.
Recall that morphisms also are self-isotopies of representative surfaces. 
We can think of each morphism set though as generated by neck-cutting compressions, sphere compressions and 
loop isotopies with relations given by Cerf theory.  
Note that because of the embedding we actually see two levels of $2$-categories. On the ambient one we have the surfaces, $3$-manifolds and cylinders over $3$-manifolds. This is why we call the bicategory cylindrical. In the end will briefly indicate how we can generalize to a broader setting. 
Note that the disjoint union of all categories $\mathcal{C}(M,\alpha )$ with $\partial M=F_1\cup F_2$ is the morphism category $\textrm{Hom}((F_1,\alpha_1),(F_2,\alpha_2))$.  
In order to define a bicategory we need composition functors:
$$\textrm{Hom}((F_2,\alpha_2),(F_3,\alpha_3))\times \textrm{Hom}((F_1,\alpha_1),(F_2,\alpha_2))\rightarrow 
\textrm{Hom}((F_1,\alpha_1),(F_3,\alpha_3))$$
On the level of objects of the $\textrm{Hom}$-categories these are defined by the glueing 
of manifolds $(M;\alpha_1,\alpha_2)$ and $(N;\alpha_2,\alpha_3)$ defining $(M\cup N;\alpha_1,\alpha_3)$, glueing of the embedded surfaces and replacing the glued surfaces by the corresponding standard representatives in their isotopy class.
On the level of morphisms we now have to glue $M\times I$ with $N\times I$ and containing \textit{representatives} of the Heegaard isotopy classes of $3$-manifolds $W\subset M\times I$ and $W'\subset N\times I$. Note that these have to be glued along $\alpha_2\times I$. The technical subtlety is the replacement of the glued surfaces in the boundaries by standard representatives in their isotopy classes. In order to define composition for morphisms we need to specify explicit isotopies of the glued surface embeddings to standard embeddings. This will allow to define composition. 
But the composition is not necessarily associative. There will be a \textit{glueing anomaly} that has to be 
recorded in terms of associativity isomorphisms, which we not discuss in detail at this point..   

\vskip .1in

The next result discusses how the Bar-Natan functor behaves with respect to glueing.
Recall that we identify $\mathcal{V}(\alpha_1,\alpha_2):=\mathcal{V}(\underline{\alpha_1},\underline{\alpha_2})$.

\vskip .1in

\begin{thm} The glueing functor $\mathfrak{g}$ from Theorem \ref{thm=glueing functor} defines a commutative diagram of functors: 
$$
\begin{CD}
\mathcal{C}(N;\alpha_2 ,\alpha_3 )\times \mathcal{C}(M;\alpha ,\beta)@>>> \mathcal{C}(N\cup M;\alpha_1 ,\alpha_3) \\
@V{F_V\otimes F_V}VV @V{F_V}VV \\
\mathcal{V}(\alpha_2 ,\alpha_3 )\times \mathcal{V}(\alpha_1 ,\alpha_2 ) @>\mathfrak{g}>>\mathcal{V}(\alpha_1 ,\alpha_3 )
\end{CD}
$$
\end{thm}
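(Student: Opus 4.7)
The plan is to verify commutativity separately on objects and on morphisms, using the presentation of $\mathcal{C}(M;\alpha_1,\alpha_2)$ by elementary morphisms given in Section \ref{sec=compression}. First I would make the top horizontal functor precise: given representatives $(M;\alpha_1,\alpha_2)$ and $(N;\alpha_2,\alpha_3)$, glue $M$ and $N$ along the chosen collar of $\alpha_2$ in $\partial M \cap \partial N$, and glue embedded surfaces and compression bordisms in the cylinders along $\alpha_2\times I$, then pass to standard representatives in the skeleton. Functoriality of this topological gluing is essentially standard and parallel to the argument that composition inside a single $\mathcal{C}(M,\alpha)$ is well-defined.

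For commutativity on objects, let $S\subset M$ bound $\alpha_1\cup\alpha_2$ and $S'\subset N$ bound $\alpha_2\cup\alpha_3$, with structural maps $\varphi_1:\underline{\alpha_1}\to\underline{S}$, $\varphi_2:\underline{\alpha_2}\to\underline{S}$, $\varphi_2':\underline{\alpha_2}\to\underline{S'}$, $\varphi_3:\underline{\alpha_3}\to\underline{S'}$. The set $\underline{S\cup_{\alpha_2}S'}$ of components of the glued surface is canonically the quotient of $\underline{S}\sqcup\underline{S'}$ by the relation $\varphi_2(j)\sim\varphi_2'(j)$ for every $j\in\underline{\alpha_2}$, because two components get identified exactly when they share a circle of $\alpha_2$. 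This is precisely the set $K$ used in the definition of $\mathfrak{g}$ in Theorem \ref{thm=glueing functor}, and the induced maps $\underline{\alpha_1}\to K$ and $\underline{\alpha_3}\to K$ match $\varphi,\varphi'$ from that definition. So $F_V(S\cup_{\alpha_2}S')=V^{\otimes K}=\mathfrak{g}(F_V(S'),F_V(S))$ as $V^{\otimes\underline{\alpha_1}}\otimes V^{\otimes\underline{\alpha_3}}$-bimodules.

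For morphisms, I would argue that every bordism in $(N\cup M)\times I$ is Heegaard-isotopic to one whose handles sit in general position with respect to $\alpha_2\times I$: in particular, each elementary $2$- or $3$-handle lies entirely on the $M$-side or the $N$-side. This reduces the check to: applying an elementary morphism of type (i)--(iv) from Section \ref{sec=algebra} on one tensor factor and then gluing equals first gluing and then applying the ``same'' elementary morphism on the image. For a handle disjoint from the components of $S$ meeting $\alpha_2$ this is immediate, since nothing touches the glued factors. The substantive cases are when the handle is supported on a component containing a circle of $\alpha_2$: a separating neck-cut (type (i)) on the $M$-side produces a coproduct $\Delta$ whose two output factors then get multiplied into the glued $N$-component via $m$, and the identities $\Delta\circ m=(m\otimes\mathrm{Id})\circ(\mathrm{Id}\otimes\Delta)$ (the Frobenius bimodule property) together with coassociativity/cocommutativity of $\Delta$ deliver the equality---exactly the identities invoked in the proof of Theorem \ref{thm=glueing functor}. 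For type (ii) one uses $m\circ(\mathfrak{k}\otimes\mathrm{Id})=\mathfrak{k}\circ m$; for type (iii), $(\varepsilon\otimes\mathrm{Id})\circ\Delta=\mathrm{Id}$ handles the case where an $\varepsilon$ meets a component created by the gluing; type (iv) is immediate since permutations of closed components are unaffected.

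The main obstacle I anticipate is bookkeeping for the skeleton category: after gluing, representative surfaces must be replaced by standard ones via explicit isotopies, and one must check that the induced permutations of closed tensor factors match on both sides of the square. This is essentially the ``glueing anomaly'' flagged in the text and is what prevents the result from upgrading to a strict $2$-functor. Once the equality of elementary morphisms is established, extension to all of $\mathcal{C}$ by $R$-linearity and composition, and compatibility with the Cerf relations (reordering critical points and $2$/$3$-handle cancellation), follows from the corresponding verifications already used in Theorem \ref{thm=main}, applied factorwise to the two sides.
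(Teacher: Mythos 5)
Your proposal is correct and takes essentially the same route as the paper's (much terser) proof: reduce commutativity to the generators of the free categories on the directed graphs $\mathfrak{c}(M;\alpha_1,\alpha_2)$ and $\mathfrak{c}(N;\alpha_2,\alpha_3)$, identify the component set of the glued surface with the quotient set $K$ from Theorem \ref{thm=glueing functor}, and check the elementary morphisms via the Frobenius identities already used there. The only superfluous step is isotoping an arbitrary bordism in $(N\cup M)\times I$ into general position with respect to $\alpha_2\times I$: the domain of the top functor is the product category, so the glued bordism's handles automatically lie on the $M$-side or the $N$-side and no such repositioning is needed.
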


\begin{proof}  It only remains to prove commutativity of the diagram. Because $\mathfrak{g}$ is a functor it suffices to check this on generators
of the free category on the directed graph $\mathfrak{c}(M;\alpha ,\beta)$ respectively $\mathfrak{c}(N;\beta ,\gamma)$. 
But then it follows from the very definitions of the functors. 
\end{proof}

\vskip .1in

\begin{rem} For each fixed orientable surface $F$
there is defined a sub-bicategory $\mathcal{C}_2(F)$ of $\mathcal{C}_2^{\textrm{cyl.}}$.
The collection of objects is defined by representatives of isotopy classes of $1$-manifolds $\alpha \subset F$. 
The objects of the morphism categories $\textrm{hom}(\alpha_1,\alpha_2)$ are the objects of $\textrm{Hom}((F,\alpha_1),(F,\alpha_2))$ but restricted to surfaces $S$ embedded in
$F\times I$ with $\partial S=\alpha_1\times \{0\}\cup \alpha_2\times \{1\}$. Note that the Bar-Natan modules corresponding to
$\alpha_1=\alpha_2$ are algebras in this case.
\end{rem}

\vskip .1in

Our main results until now are summarized in the following theorem.

\vskip .1in

\begin{thm} \label{thm=bifunctor} For each Frobenius algebra $V$ the Bar-Natan functor extends to a bifunctor
$$F: \mathcal{C}_2^{\textrm{cyl.}}\rightarrow \mathcal{V}_2,$$
which factors through a $2$-functor
$$\mathcal{C}_2^{\textrm{cyl.}}\rightarrow \mathcal{F}_2$$
and the canonical bifunctor
$$\mathcal{F}_2\rightarrow \mathcal{V}_2.$$
The functor is monoidal with respect to the obvious monoidal structures on the bicategories $\mathcal{C}_2^{\textrm{cyl.}}$ and $\mathcal{V}_2$. 
\end{thm}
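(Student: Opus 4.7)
The plan is to define the bifunctor $F$ level-by-level and then verify the coherence conditions, using Theorem \ref{thm=main} for the 1-morphism level and the preceding glueing theorem for horizontal composition. On objects I would set $F(F_0,\alpha):=V^{\otimes\underline{\alpha}}$. On each morphism category $\textrm{Hom}((F_1,\alpha_1),(F_2,\alpha_2))$, which by the remark is the disjoint union over cobordisms $M$ between $F_1$ and $F_2$ of the categories $\mathcal{C}(M,\alpha_1\cup\alpha_2)$, I would set $F$ equal to the Bar-Natan functor $F_V$ of Theorem \ref{thm=main} on each piece. Thus vertical composition of 2-morphisms (composition inside each $\mathcal{C}(M,\alpha_1\cup\alpha_2)$) is handled automatically by Theorem \ref{thm=main}, and on objects of the morphism categories one recovers the $V^{\otimes\underline{\alpha_1}}\otimes V^{\otimes\underline{\alpha_2}}$-bimodule $V^{\otimes\underline{S}}$ associated to an embedded surface $S$, which is indeed an object of $\mathcal{V}(\alpha_1,\alpha_2)$.

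The main obstacle will be horizontal composition of 2-morphisms, i.e.\ functoriality with respect to glueing cobordisms $M\cup N$ along a common boundary $(F_2,\alpha_2)$. This is precisely where the preceding commutative-diagram theorem (the topological-algebraic compatibility of the glueing functor $\mathfrak{g}$ with $F_V$) does the work: the horizontal composite of two 2-morphisms represented by bordisms $W\subset M\times I$ and $W'\subset N\times I$ glues to a bordism in $(M\cup N)\times I$, and the proved compatibility says that its image under $F_V$ equals $\mathfrak{g}(F_V(W'),F_V(W))$. One still has to check that this assignment respects the interchange law between vertical and horizontal composition; but since $\mathfrak{g}$ is a functor of categories and $F_V$ is a functor, interchange reduces to the identity $(g_2\circ g_1)\otimes(f_2\circ f_1)=(g_2\otimes f_2)\circ(g_1\otimes f_1)$ inside $\mathcal{V}_2$, which holds by construction. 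The associativity coherence 2-isomorphisms of $\mathcal{C}_2^{\textrm{cyl.}}$ (coming from the isotopies needed to identify $(M\cup N)\cup P$ with $M\cup (N\cup P)$ and to replace the glued surfaces by standard representatives) must be sent to the corresponding canonical isomorphisms in $\mathcal{V}_2$; this is verified by unwinding definitions, since $\mathfrak{g}$ is defined via a pushout of finite sets which is associative up to unique isomorphism.

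To construct the factorization through $\mathcal{F}_2$, I would define the intermediate 2-functor $\mathcal{C}_2^{\textrm{cyl.}}\to\mathcal{F}_2$ by sending $(F_0,\alpha)$ to $\underline{\alpha}$, a representative surface $S$ with $\partial S=\alpha_1\cup\alpha_2$ to the triple $(\varphi_1,\varphi_2,\psi)$ where $\varphi_i\colon\underline{\alpha_i}\to\underline{S}$ is the inclusion on components and $\psi\colon\underline{S}\to\mathbb{N}$ records the genus of each component, and sending each generating 2-morphism (a separating or non-separating neck-cut, a 2-sphere compression, or a self-isotopy) to the correspondingly named combinatorial 2-morphism $\Delta_k$, $\mathfrak{k}_k$, $\varepsilon_k$, or $\pi_{k_1,k_2}$. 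Well-definedness on 2-morphisms comes from Cerf theory (the same argument as in the proof of Theorem \ref{thm=main}, now interpreted at the combinatorial level), and compatibility of the genus-addition rule under glueing matches the combinatorial composition law defined in the construction of $\mathcal{F}_2$. Composing with the canonical bifunctor $\mathcal{F}_2\to\mathcal{V}_2$ produced by the proposition just above then reproduces $F$, giving the claimed factorization.

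Finally, monoidality reduces to the compatibility of disjoint union: disjoint union of pairs $(F_0,\alpha)$ on the topological side corresponds on the algebraic side to tensor product of the $V^{\otimes\underline{\alpha}}$, and the functorial commutative diagram already established for disjoint unions of 1-morphisms (the proposition at the end of Section \ref{sec=functor}) upgrades routinely to 2-morphisms because 2-morphisms supported in disjoint cylinders are sent to tensor products of $\mathcal{V}_2$-morphisms. The expected hardest step is the bookkeeping around the glueing anomaly: making sure that the choices of standard representatives and the explicit isotopies used to define horizontal composition in $\mathcal{C}_2^{\textrm{cyl.}}$ map to the canonical associator 2-isomorphisms of $\mathcal{V}_2$, rather than merely to arbitrary isomorphisms; this is exactly the content absorbed by Theorem \ref{thm=glueing functor} and its compatibility with $F_V$.
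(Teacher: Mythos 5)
Your proposal is correct and follows essentially the same route as the paper: the paper gives no separate proof of this theorem, treating it as a summary of the constructions of section \ref{sec=bicategory} (the combinatorial $2$-category $\mathcal{F}_2$ with its triples $(\varphi_1,\varphi_2,\psi)$, the canonical $2$-functor $\mathcal{F}_2\rightarrow\mathcal{V}_2$, Theorem \ref{thm=glueing functor}, and the commutative glueing diagram), which are exactly the ingredients you assemble, with vertical composition handled by Theorem \ref{thm=main}, horizontal composition by the glueing compatibility, and monoidality by the disjoint-union proposition. Your treatment of the associativity coherence (the glueing anomaly) is at the same level of detail as the paper's own, which explicitly declines to spell it out.
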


Note that the bifunctor depends on the embeddings only through the domain category. The values taken are in each case determined \textit{only} by the underlying surface respectively $3$-manifold.
              
\vskip .1in

Given a Frobenius algebra $V$ let $T_V$ be the associated $2$-dimensional topological quantum field theory.
In \cite{K1} we discussed how the Bar-Natan skein relations can be understood in terms of the \textit{morphism
kernel} of the $2$-dimensional topological quantum field theory $T_V$ associated to the Frobenius algebra $V$.
In \cite{K1}, Theorem 3.2 and Theorem 5.1 we have shown that $T_V$ extends to surfaces with components colored by elements of the Frobenius algebra, and then to the linearized category defined by taking formal $R$-linear combinations of morphisms with the same source and target. Then the Bar-Natan relations span the kernel, see also 
\cite{BN}. In the following we will analyze the precise relation between $T:=T_V$ and $F:=F_V$.
Moreover we will see that the idea of \textit{coloring} and \textit{linearizing} the morphisms of the category also extends to the cylindrical compression bicategory. This process is a kind of \textit{categorical extension}
to which the functor or bifunctor naturally extends.

\vskip .05in

Firstly note that for a given surface $S$ the value of $F(S)$ in general does not determine the value of $T(S)$.
In fact for a surface $S$ with $\partial S=\alpha_1\times \{0\}\cup \alpha_2\times \{1\}$ let $\widetilde{T}(S)$ 
denote the homomorphism $V^{\otimes \underline{\alpha_1}}\rightarrow V^{\otimes \underline{\alpha_2}}$ defined by 
changing the functor $T$ by letting the handle operator $\mathfrak{k}=m\circ \Delta =\textrm{Id}$. Note that the axioms of a topological quantum field theory will still be defined. But the functor $\widetilde{T}$ does not sense any genus information. Now $F(S)$ determines natural inclusion $j_i: V^{\otimes \underline{\alpha_i}}\rightarrow V^{\otimes \underline{S}}$ for $i=1,2$. The dual homomorphisms, in the sense of linear algebra duality and by identifying $\textrm{Hom}(V,R)$ canonically with $V$ using the Frobenius pairing, defines homomorphisms 
$j_i^*: V^{\otimes \underline{S}}\rightarrow V^{\otimes \underline{\alpha_i}}$ for $i=1,2$. 

\begin{prop} For each Frobenius algebra $V$,
$$\widetilde{T}_V(S)=j_2^*\circ j_1$$
\end{prop}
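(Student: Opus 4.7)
My plan is to reduce the problem to connected components of $S$ and then to a standard Frobenius-algebra computation. First, I would observe that if $S = \bigsqcup_c S_c$ is the decomposition of $S$ into connected components, then both sides of the desired equality factor through this decomposition: the module $V^{\otimes \underline{S}}$ is the tensor product of the $V^{\otimes \underline{S_c}}$, the maps $j_1, j_2$ split as tensor products of their restrictions to each component (because only boundary circles mapping to the same component interact under the $V^{\otimes \underline{\alpha_i}}$-module structure of $V(\varphi_i)$ from section \ref{sec=algebra}), and hence so does $j_2^*$. Meanwhile $\widetilde{T}(S) = \bigotimes_c \widetilde{T}(S_c)$ by the monoidal axiom of a $2$-dimensional topological quantum field theory. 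So it suffices to verify the identity when $S$ is connected.

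Assume then that $S$ is connected of genus $g$, with $p = |\underline{\alpha_1}|$ ingoing and $q = |\underline{\alpha_2}|$ outgoing boundary circles, each necessarily mapping to the unique element of $\underline{S}$. By definition of the $V^{\otimes \underline{\alpha_i}}$-module action on $V^{\otimes \underline{S}} = V$, the map $j_i$ applied to $v_1 \otimes \cdots \otimes v_{n_i}$ is $v_1 \cdots v_{n_i} \cdot 1$, so $j_i$ is identified with the iterated multiplication $m^{(n_i)}$ (with the convention $m^{(0)} = \mu$).

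Next I would compute $j_2^*$ via the Frobenius pairing, where the pairing on $V^{\otimes q}$ is the $q$-fold tensor product of $\mathfrak{p}$. The key identity is that multiplication is Frobenius-adjoint to comultiplication: for $a,b,c \in V$,
\[ \mathfrak{p}(ab,c) = \epsilon(abc) = \sum_{(c)} \epsilon(a c^{(1)}) \, \epsilon(b c^{(2)}) = \mathfrak{p}^{\otimes 2}(a \otimes b,\, \Delta(c)), \]
where the middle equality follows from the auxiliary identity $\sum \epsilon(a c^{(1)}) c^{(2)} = ac$, itself an immediate consequence of the Frobenius bimodule property of $\Delta$ combined with the counit axiom $(\epsilon \otimes \textrm{Id})\circ \Delta = \textrm{Id}$. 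Iterating this identity $q-1$ times identifies $j_2^*$ with $d_q$.

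Finally I would identify $\widetilde{T}(S)$. The standard $2$-dimensional TQFT description, obtained from any pair-of-pants and handle decomposition of a connected surface, gives $T(S) = d_q \circ \mathfrak{k}^g \circ m^{(p)}$; setting $\mathfrak{k} = \textrm{Id}$ yields $\widetilde{T}(S) = d_q \circ m^{(p)} = j_2^* \circ j_1$, as required. The only real obstacle is bookkeeping: carefully matching the categorical tensor-product orderings on each $\underline{S_c}$ with the boundary-circle-to-component map $\varphi_i$, so that the component-wise reduction respects the full module structures on both sides. Once this is unwound from the definitions in section \ref{sec=algebra}, the proof reduces to the Frobenius-algebra adjointness above and is essentially routine.
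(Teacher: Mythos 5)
Your proof is correct and follows essentially the same route as the paper: reduce to a connected component, identify $j_1$ with iterated multiplication and $j_2^*$ with iterated comultiplication via the Frobenius duality of $m$ and $\Delta$, and compare with the normal form $d_q\circ\mathfrak{k}^g\circ m^{(p)}$ of the TQFT on a connected surface with $\mathfrak{k}$ set to the identity. You merely spell out the adjunction step $\mathfrak{p}(ab,c)=\mathfrak{p}^{\otimes 2}(a\otimes b,\Delta(c))$ more explicitly than the paper does.
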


\begin{proof}This can first be seen for a connected surface $S$ because $m$ and $\Delta $ are dual to each other, and $j_1$ thus $j_1$ is induced by products while $j_2^*$ is induced by coproducts.
In fact, in a suitable normal form, and after ordering the components of $\alpha_1,\alpha_2$ suitably, the homomorphism
$T(S)$ has the form
$$(\Delta \otimes \textrm{Id}^{\otimes (|\alpha_2 |-1)})\circ (\Delta \otimes \textrm{Id}^{\otimes (|\alpha_2|-2)}) 
\circ \ldots \circ (\Delta \otimes \textrm{Id})\circ \Delta \circ m\circ (m\otimes \textrm{Id} )\circ \ldots $$
$$\ldots \circ (m \otimes \textrm{Id}^{\otimes (|\alpha_1|-1)})$$
The general result now follows immediately.
\end{proof}

\vskip .05in

Next we extend the category $\mathcal{C}_2^{\textrm{cyl}}$ to a linearized category 
$\mathcal{C}_2^{\textrm{cyl}}(R)$ with colored morphisms.
For this we consider for each $3$-manifold $M$ the morphism category $\textrm{Hom}_M$. This is by definition the 
subcategory of \\ $\textrm{Hom}((F_1,\alpha_1),(F_2,\alpha_2))$ (which is a morphism category of $\mathcal{C}_2^{\textrm{cyl.}}$) with objects all representative surface embeddings $S$ of isotopy classes of surfaces properly embedded in $M$ with $\partial S=\alpha_1\times \{0\}\cup \alpha_2\times \{1\}$ and morphisms all Heegaard isotopy classes of compression $3$-manifolds in $M\times I$. Then replace the set of objects by the free $R$-module  $R\overline{\textrm{Hom}_M}$ generated by the set of colored objects $\overline{\textrm{Hom}_M}$, i.\ e.\ representative surfaces $S$ of isotopy classes of surfaces $S$ in $M$ with colored components (Note that this essentially corresponds to the free $R$-module generated by isotopy classes of surfaces. The subtlety of the chosen representatives comes from the fact that we can choose an ordering of the components of a representative but we cannot choose an ordering for an isotopy class. See also the proof of Theorem 3.5.) The object set of our morphism category
$\textrm{Hom}((F_1,\alpha_1),(F_2,\alpha_2))$ becomes a module, which graded over $3$-manifolds $M$ with 
$\partial M=F_1\times \{0\}\cup F_2\times \{1\}$. In fact the composition by glueing naturally extends using the 
multiplication of colors and bilinear extension
$$R\overline{\textrm{Hom}_N}\otimes R\overline{\textrm{Hom}_M}\rightarrow R\overline{\textrm{Hom}_{N\cup M}}$$
We want to define a corresponding functor.
For this we have to extend the linearization and colorings to $2$-morphisms of $\mathcal{C}_2^{\textrm{cyl.}}$.
Now $R$-homomorphisms are defined on a basis. So it suffices to define the morphisms on some element of 
$\overline{\textrm{Hom}_M}$. Now let $S$ be a representative in $\overline{\textrm{Hom}_M}$. 
Then we consider a Heegaard isotopy class of compression bordisms $W\subset M\times I$ with $\partial W=S\times \{0\}\cup S'\times \{1\}$, which is a morphism in $\textrm{Hom}_M$. We decompose the compression bordism into 
elementary bordisms corresponding to Bar-Natan relations and extend the colors starting from $S$ using the Bar-Natan relations, including self-isotopies permuting closed components of $S$. Then $S'$ will be replaced by an $R$-linear combination of colored representative surfaces. The module of morphisms of the category $\textrm{Hom}_M(R)$ then will be the submodule of the module of all $R$-homomorphisms between the corresponding free modules generated by the   
$R$-homomorphisms defined in this way. It is now not hard to see that the above compositions become functors.
Note that the $2$-morphisms themselves have not been colored but have to respect the colorings of sources and targets. 

\vskip .05in

The linear $2$-category $\mathcal{V}_2$ extends to a colored $2$-category $\overline{\mathcal{V}}_2$ as follows. We will not change the objects of the category. The objects of the morphism categories will be changed as follows. Given $J_1,J_2$ two finite sets then a morphism from $V^{\otimes J_1}$ to $V^{\otimes J_2}$ is an element 
$w\in V^{\otimes K}$ where $V^{\otimes K}$ is a $V^{\otimes J_1}\otimes V^{\otimes J_2}$-module as defined in section \ref{sec=algebra}.
We form the free $R$-module generated by these elements and then take the quotient by relations 
$(V^{\otimes K},w_1)+(V^{\otimes K},w_2)-(V^{\otimes K},w_1+w_2)$ (where we preferred to consider formally $w\in V^{\otimes K}$ as a pair $(V^{\otimes K},w)$).
Note that the composition of morphisms of $\mathcal{V}_2$ involves forming a set $K$ of equivalence classes of some equivalence relation on a $K'\cup K''$ and induces the homomorphism $V^{\otimes K'}\otimes V^{\otimes K''}\rightarrow V^{\otimes K}$. We define the coloring for the composition by taking the image of $w'\otimes w''$ in $V^{\otimes K}$.
Recall that the set of $2$-morphisms has been generated by composition from elementary morphisms defined from the structure morphisms $\Delta, \mathfrak{k}, \varepsilon $ and permutations.
Let $\mathfrak{e}$ be such an elementary morphism.
As above the $2$-morphisms will not be colored by themselves but will relate colored morphisms by mapping $(V^{\otimes K},w)$ to a corresponding $(V^{\otimes K'},\mathfrak{e}(w))$.

\vskip .05in

Let $\mathcal{C}^{\textrm{cyl.}}(R)$ be defined by forgetting the $2$-morphisms of the bicategory $\mathcal{C}_2^{\textrm{cyl.}}(R)$. Then we can summarize our constructions up to this point as follows:
Note that each of the colored categories comes with a natural inclusion functor using colorings by the unit
of the Frobenius algebra only. There is \textbf{no} such inclusion functor for the bicatgeories on the level of 
$2$-morphisms because a $2$-morphism applied to a surface colored by units only will in general not map
the corresponding topological surface colored by units only.

\vskip .1in
 
It turns out to be rather difficult to relate the colored bicategories with the bicategories.
This is mainly due to a \textit{lack of null-morphisms} in the geometric category $\mathcal{C}_2^{\textrm{cyl.}}$.
The null-morphism in this category of course is the empty $3$-manifold $\emptyset \subset \emptyset \times I$ bounding the empty surface $\emptyset $, which is the unique morphism from $(\emptyset ,\emptyset )$ to $(\emptyset ,\emptyset )$. Each of the morphism categories $\textrm{Hom}_M$ for $M$ closed contains a null-object given by the
empty surface embedded in $M$. Note that this is the cylinder, which is the identity object in general, in this case. 

\vskip .05in
 
We will need null-morphisms in \textit{all} morphism sets and also null-$2$ morphisms.
We first extend the bicategory $\mathcal{C}_2^{\textrm{cyl.}}$ by introducing additional null-morphisms between 
any two objects $(F_1,\alpha_1)$ and $(F_2,\alpha_2)$. Then extend the composition in such a way that composition with any null-morphism is the corresponding null-morphism. Extend the $2$-morphisms similarly by introducing null-$2$-morphisms. Then the bifunctor $F_V$ naturally extends to this category by mapping null-morphisms 
to the trivial bimodules and mapping the $2$-morphisms in the obvious way. Call the resulting bicategory 
$\mathcal{C}_2^{\textrm{cyl.}}[0]$. Now consider the bicategory $\mathcal{C}_2^{\textrm{cyl.}}(R)$.  
We call a linear combination of colored surfaces \textit{monic} if it contains with nontrivial coefficients only terms with the same underlying topological surface. Next call an elementary $2$-morphism monic if it is a usual $2$-morphism in $\mathcal{C}_2^{\textrm{cyl.}}(R)$ between two monic linear combinations but additionally, if written as a linear combination in the free module, it does not contain two different topological elementary morphisms.
Consider all possible compositions between those monic $2$-morphisms. 
Let $\mathcal{C}_2^{\textrm{cyl.}}(R,\textrm{mon.})$ be the sub-bicategory defined in this way.

\vskip .05in
 
There is a projection bifunctor 
$$\mathfrak{p}: \mathcal{C}_2^{\textrm{cyl.}}(R,\textrm{mon.})\rightarrow \mathcal{C}_2^{\textrm{cyl.}}[0].$$
It is defined by the identity on objects. Then let $\mathfrak{p}$ map monic non-zero morphisms to the underlying unique non-colored morphism in $\mathcal{C}_2^{\textrm{cyl.}}[0]$, which appears with non-zero coefficients. Map all non-monic morphisms to the null-morphism. Similarly map elementary monic $2$-morphisms to the corresponding elementary $2$-morphism in 
$\mathcal{C}_2^{\textrm{cyl.}}[0]$, and map each elementary non-monic $2$-morphisms to the corresponding null-$2$-morphism. Then extend to compositions in the obvious way. 

\vskip .05in

We define similarly a functor 
$$\mathfrak{q}: \overline{\mathcal{V}}_2\rightarrow \mathcal{V}_2[0].$$
The $2$-category $\mathcal{V}_2[0]$ differs from the bicategory $\mathcal{V}_2$ by adding 
morphisms, which are the trivial $V^{\otimes J_1}\otimes V^{\otimes J_2}$-modules $V^{\otimes K}$ 
also if $J_1\cup J_2\neq \emptyset $ (compare the definitions in section \ref{sec=algebra}.) 
(This corresponds to the case $K=\emptyset $, in which case we do not have maps $\varphi_i : J_i\rightarrow K$,
but we still can consider $V^{\otimes \emptyset }=\{0\}$ as a $V^{\otimes J_1}\otimes V^{\otimes J_2}$-module with the trivial module actions.)
Let $\mathfrak{q}$ map objects to objects. 
Consider the morphisms from $V^{\otimes J_1}$ to $V^{\otimes J_2}$. 
We call an $R$-linear combination of $V^{\otimes J_1}\otimes V^{\otimes J_2}$-modules $(V^{\otimes K},w_K)$ (i.\ e.\ a morphism in
$\overline{\mathcal{V}}_2$) monic if it contains only one term with non-zero coefficient. Then we map monic 
morphisms $(V^{\otimes K},w_K)$ to $V^{\otimes K}$. We map all non-monic linear combinations to the trivial
$V^{\otimes J_1}\otimes V^{\otimes J_2}$-module (this corresponds to $K=\emptyset $). Similarly we define the functor on $2$-morphisms. Note that we do not have to extend on the $2$-morphism level in anon-trivial way because the 
$2$-morphism sets defined in section 3 already always contain a null-morphism.

\vskip .05in

The bifunctor $F_V$ now extends to the \textit{null-extended} bifunctor $\mathcal{C}_2^{\textrm{cyl.}}[0]\rightarrow \mathcal{V}_2[0]$.

\vskip .05in 
 
Let $R$-Mod denote the category of $R$-modules.  

\vskip .05in

\begin{thm} The topological quantum field theory functor $T_V$ extends to a functor:
$\mathcal{C}^{\textrm{cyl.}}(R)\rightarrow R$-Mod. Moreover there is defined a bifunctor:
$$\mathcal{C}_2^{\textrm{cyl.}}(R)\rightarrow \overline{\mathcal{V}}_2
$$
lifting the null-extended bifunctor $F_V: \mathcal{C}_2^{\textrm{cyl.}}[0]\rightarrow \mathcal{V}_2[0]$
with respect to the projection bifunctors $\mathfrak{p}$ and $\mathfrak{q}$ defined above. 
$\square$
\end{thm}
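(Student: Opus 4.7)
My plan has two parts, matching the two statements.

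\textbf{Part 1 (the functor $T_V: \mathcal{C}^{\textrm{cyl.}}(R) \to R\textrm{-Mod}$).} On objects, set $(F,\alpha) \mapsto V^{\otimes \underline{\alpha }}$, agreeing with the classical TQFT. For a generator of a Hom module, i.e.\ a colored embedded surface $(S \subset M, c_1 \otimes \cdots \otimes c_r)$ with components $S_1,\ldots,S_r$ carrying colors $c_i \in V$, forget the embedding so that $S$ becomes an abstract 2-bordism, apply the classical $T_V(S)$, and insert multiplication by $c_i$ in the factor corresponding to $S_i$ (using that $T_V(S)$ factors componentwise). Extend $R$-linearly to $R\overline{\textrm{Hom}_M}$, and take a direct sum over the grading by $M$. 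Functoriality under glueing of $(M,S)$ and $(N,S')$ along a common boundary is the TQFT axiom for the underlying topological surfaces, combined with the observation that colors on components identified under glueing multiply via $m \colon V \otimes V \to V$; commutativity of $V$ ensures the result is independent of orderings of components.

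\textbf{Part 2 (the bifunctor $G: \mathcal{C}_2^{\textrm{cyl.}}(R) \to \overline{\mathcal{V}}_2$).} On objects $G(F,\alpha ) := V^{\otimes \underline{\alpha }}$, as for $F_V$. On a generator $(S \subset M, c_1 \otimes \cdots \otimes c_r)$ of a Hom module, define $G(S,c) := (V^{\otimes \underline{S}}, c_1 \otimes \cdots \otimes c_r)$ as a colored object of $\overline{\mathcal{V}}_2$, then extend $R$-linearly. On 2-morphisms, decompose a compression bordism into elementary moves via Cerf theory (neck-cut, $3$-handle cap, self-isotopy); each such move transforms the colored surface into a linear combination of colored surfaces, and this transformation is precisely the action of $\Delta_k$, $\mathfrak{k}_k$, $\varepsilon_k$, or a permutation $\pi_{k_1,k_2}$ on the colored $V^{\otimes \underline{\alpha_1}}\otimes V^{\otimes \underline{\alpha_2}}$-bimodule. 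Well-definedness at the 2-morphism level uses exactly the identities already verified in the proof of Theorem \ref{thm=main} (coassociativity, cocommutativity, the counit identity, and $\mathfrak{k}$-compatibility), now read on colored elements; the elementary morphisms in $\overline{\mathcal{V}}_2$ are $R$-linear, so the color is carried automatically. Horizontal composition of 1-morphisms is handled by Theorem \ref{thm=glueing functor}: glueing $(M,S)$ with $(N,S')$ along $\alpha_2$ induces the same algebraic glueing $V^{\otimes K'} \otimes V^{\otimes K''} \to V^{\otimes K}$ on the module level while multiplying colors on identified components.

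For the lifting compatibility with $\mathfrak{p}$ and $\mathfrak{q}$, a monic colored morphism in $\mathcal{C}_2^{\textrm{cyl.}}(R,\textrm{mon.})$ maps under $G$ to a monic object of $\overline{\mathcal{V}}_2$, and applying $\mathfrak{q}$ strips the color, leaving $V^{\otimes \underline{S}}$, which equals $F_V \circ \mathfrak{p}$ applied to the same input; non-monic combinations map to non-monic objects, and both $\mathfrak{q}$ and $\mathfrak{p}$ collapse them consistently to the null-module and null-morphism, respectively. The analogous statement on 2-morphisms holds because on elementary monic generators $G$ and the null-extended $F_V$ agree on the underlying topology, while both send non-monic elementaries to the null-2-morphism. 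I expect the main obstacle to be the careful bookkeeping that the 2-morphism assignment on colored data respects every Cerf-type move (reordering of critical levels, creation and cancellation of a $2/3$-handle pair, and absorption of self-isotopies) \emph{simultaneously} on colors and on the underlying topology; this is not deep but requires the same discipline as the proof of Theorem \ref{thm=main}, applied one elementary move at a time with the color carried as an element of the relevant bimodule.
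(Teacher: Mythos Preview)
Your proposal is correct and aligns with the paper's approach: the paper gives no explicit proof (the $\square$ immediately follows the statement), treating the result as an assembly of the constructions already laid out in the preceding paragraphs of section~\ref{sec=bicategory} together with the extension of $T_V$ to colored surfaces established in \cite{K1}, Theorems~3.2 and~5.1. Your Parts~1 and~2 spell out precisely those ingredients --- the componentwise coloring of $T_V$, the assignment $G(S,c)=(V^{\otimes \underline{S}},c)$, the Cerf-theoretic decomposition of 2-morphisms handled via Theorem~\ref{thm=main}, and horizontal composition via Theorem~\ref{thm=glueing functor} --- and your check of the $\mathfrak{p}/\mathfrak{q}$ lifting on monic versus non-monic data matches the paper's definitions of those projections exactly.
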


\vskip .05in

In the non-embedded setting each surface $S$ can be compressed to a disjoint union $S'$ of disks bounding
$\alpha_1\times \{0\}\cup \alpha_2\times \{1\}$. It has been shown in \cite{K1} that this allows the local 
calculation of $T_V(S)$ using the extension of $T_V$ to the colored category.   
The bifunctor $F_V$ respectively its extension to the colored bicategory $\mathcal{C}_2^{\textrm{cyl.}}(R)$ allows this calculation in the embedded setting. The upshot is that application of the Bar-Natan relations does \textbf{not} change the value of $T_V$ on the underlying abstract surface.
Explicitly, if $F_V(S')=(V^{\otimes \underline{\alpha _1}}\otimes V^{\underline{\alpha_2}},w_1\otimes w_2)$ then 
$$T_V(S')=T_V(S)=(V^{\otimes \underline{\alpha_1}}\ni v\mapsto \varepsilon^{\otimes \underline{\alpha_1}}(vw_1)w_2).$$
In the embedded case the ways to embed the surface, and moreover the ways to compress the surface are obstructed by the nontrivial topology of the $3$-manifold.
Note that if $S\to S'$ is a morphism between a $V$-colored surface $S$ and $S'=\sum_jr_jS_j$ for $V$-colored surfaces $S_j$, all of the same topological type. The above arguments show that $T_V$ can always be calculated 
from $\widetilde{T}_V(S')=\widetilde{T}_V(S)$ and thus from $F_V(S)$ \textit{together} with the data of the 
genera of the components (i.\ e.\ the map $K\rightarrow \mathbb{N}$ in our combinatorial model).  
Choosing a \textit{generating set} for the Bar-Natan module thus corresponds to choosing normal forms with respect to which the topological quantum field theory can be calculated. It is the insensitivity of the $2$-morphisms with respect to $T_V$, which makes this calculation possible.

\vskip .1in

The cylindrical compression bicategory $\mathcal{C}_2^{\textrm{cyl.}}$ naturally can be generalized as follows
to a bicategory $\mathcal{C}_2$.  
Note that the main point of $M\times I$ used is a kind of \textit{temporal} structure in terms of a natural Morse function. Recall that a general morphism of the category $\textrm{Hom}((F_1,\alpha_1),(F_2,\alpha_2))$ will map
a pair $(M,S)$ to a pair $(N,S')$ (objects of the category $\textrm{Hom}((F_1,\alpha_1),(F_2,\alpha_2))$), where $\partial M=\partial N=F_1\cup F_2$ and $\partial S=\alpha_1\cup \alpha_2=\partial S'$. Such a morphism should be a $4$-manifold $Z$ with corners and a certain isotopy class of $3$-dimensional properly embedded submanifold $W\subset Z$
with corners as follows: $\partial Z=M\cup ((F_1\cup F_2)\times I)\cup N$ (actually we replace $M$ by a copy $M\times \{0\}$ and $N$ by a copy $N\times \{1\}$, and correspondingly for the boundary). $Z$ comes equipped with a Morse function such that the restriction of the Morse function to $W$ is a Morse function with only critical points of index $2$ or $3$. Isotopy of $W$ in $Z$ is supposed to be \textit{Heegaard} as before in the sense that the isotopy only changes the order of critical points
or cancels a pair of index $2,3$ critical points. Also $\partial W\subset \partial Z$ decomposes as 
$S\cup ((\alpha_1\cup \alpha_2)\times I)\cup S'$. As before the composition defined by glueing will define a functor of categories. Because the Bar-Natan functor only measures the change in $S\to S'$ it extends to a bifunctor 
$F_V: \mathcal{C}_2\rightarrow \mathcal{V}_2$.         

\vskip .1in

The bicategory $\mathcal{C}_2$ admits a bifunctor to a category $\mathcal{M}_2$, which forgets the Morse functions and Heegaard condition. Here $\mathcal{M}_2$ is defined just as above but, following the notation from section \ref{sec=compression}, 
the morphisms between surfaces are replaced by general $3$-manifolds $W$ embedded in $4$-manifolds $Z$.

\vskip .05in

\noindent \textbf{Problem} Extend the bifunctor $F_V$ in a \textit{nontrivial} way to the bicategory $\mathcal{C}_2$ such that it also measures the change in the topology $M\to N$ given by the $4$-manifold $Z$.  
 
\vskip .05in

Maybe there exists a \textit{twisting construction} to define a functor on a codimension-one embedded category
in dimensions 3 and 4. (This corresponds to the case where $(F,\alpha )$ is empty.) Here consider a category 
$\mathcal{C}^{\textrm{closed}}$ with objects representatives of diffeomorphism classes of pairs $(M,S)$ where $M$ is a closed $3$-manifold and $S$ is a closed surface embedded in $M$. Then let the morphisms $\textrm{Hom}((M,S),(N,S'))$ be the diffeomorphism classes of pairs $(Z,W)$ relative to the boundary as above, where $Z$ comes equipped with a Morse function restricting to a Morse function on $W$ with only index $2,3$ critical points as before, and such that the critical values for the Morse function $f$ on $Z$ are different from all the critical values of its restriction to $W$. Diffeomorphism of the pair will have to respect the Heegaard property but may allow to move critical points of $f$ acroos critical points of $f|W$. Now suppose we have given a topological quantum field theory $G$ for closed $3$-manifolds bounding $4$-manifolds with values in a category of $R$-modules. Then $G$ naturally combines with the functor $F_V$ for $V$ a weak Frobenius algebra. In fact, an object $(M,S)$ can be mapped to $G(M)\otimes F_V(S)$. Note that functor applied to $S\subset M$ does not take into account the embedding into $M$ at all. Now split $(Z,W)$ according to the critical points of $f$. If we cross a critical point $c$ of $f$ the topology of the ambient $3$-manifold $M$ will be changed by a $4$-dimensional bordism $Z_c$ to a $3$-manifold $N$ but the topology of the embedded surface $S\subset M$ will not change. We assign to this the linear morphism $G(Z_c)\otimes \textrm{Id}$. Similarly if we cross a critical value $d$ of $f|W$ then the topology of $S\subset M$ will be changed to $S'\subset M$ by a compression bordism $W$, and we can apply $\textrm{Id}\otimes F_V(W)$. Because of the compatibility of the tensor product of homomorphisms and composition, $G$ being a topological quantum field theory, and $F_V$ being a functor on $\mathcal{C}(M)$, the
composition of those morphisms will not depend on the decomposition.  

\section{Presentations of colimit functors}\label{sec=presentations}

Throughout let $\mathcal{D}$ be a small category with set of vertices $\mathcal{D}^0$, morphisms will also be called arrows. Recall that for each small category $\mathcal{C}$, 
$\mathcal{C}^1$ denotes the set of morphisms.

\begin{defn} A set $\mathfrak{t}\subset \mathcal{D}^0$ is called \textit{terminal} for $\mathcal{D}$ if for every object $c$ there \textit{exists} a morphism $c\rightarrow t$ for some $t\in \mathfrak{t}$. A terminal set $\mathfrak{t}$ is called \textit{minimal} if $\mathfrak{t}\setminus \{t\}$ is not terminal for each $t\in \mathfrak{t}$. 
\end{defn}

\vskip .1in

Note that a $1$-element terminal set has to be distinguished from a terminal element in the sense of category theory because we do not make any assumptions on the  \textit{uniqueness}. Note that each small category has a terminal set namely the set of objects of the category itself. 
Obviously a terminal set is minimal if and only if there are no morphisms between two different elements of $\mathfrak{t}$. 

\begin{lem} \label{lem Colim terminal} Let $\mathfrak{t}$ be a terminal set for $\mathcal{D}$ and let $G: \mathcal{D}\rightarrow R$-mod be a functor into a category of $R$-modules. Then
$\colim G$ is isomorphic to the quotient of the direct sum of modules $G(t)$ for $t\in \mathfrak{t}$ by the submodule, which is generated by all elements $G(a)v-G(a')v$ where $a:c\rightarrow t$ and $a': c\rightarrow t'$ are any two morphisms and $t,t'\in \mathfrak{t}$. In particular, the module $\colim G$ is generated by a union of sets of generators of the modules $G(t)$ for $t\in \mathfrak{t}$.
\end{lem}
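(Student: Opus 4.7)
The plan is to construct mutually inverse $R$-module homomorphisms between $\colim G$ and the claimed quotient
$Q := \bigoplus_{t\in \mathfrak{t}} G(t)/Rel'$, where $Rel'$ is the submodule generated by all elements of the form $G(a)v - G(a')v$ for parallel pairs of morphisms $a: c \to t$, $a': c \to t'$ with $t,t' \in \mathfrak{t}$ and $v \in G(c)$.

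First I would define $\phi: Q \to \colim G$ by sending each summand $G(t)$ to the corresponding summand of $\bigoplus_{c \in \mathcal{D}^0} G(c)$ and post-composing with the quotient map to $\colim G$. To check this descends through $Rel'$, observe that for any morphism $a: c \to t$ and $v \in G(c)$, the defining relation $v - G(a)v \in Rel$ in $\colim G$ forces $G(a)v = v$ modulo $Rel$, and hence $G(a)v = G(a')v$ for any second morphism $a': c \to t'$. Thus $\phi$ is well-defined.

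For the inverse direction, I would use the terminality of $\mathfrak{t}$ to choose, for every object $c \in \mathcal{D}^0$, a distinguished morphism $a_c: c \to t_c$ with $t_c \in \mathfrak{t}$, making the choice $a_t = \mathrm{id}_t$ whenever $t \in \mathfrak{t}$. Define $\widetilde{\psi}: \bigoplus_{c} G(c) \to Q$ on the summand indexed by $c$ by $v \mapsto [G(a_c)v] \in G(t_c) \subset Q$, and extend linearly. The main verification is that $\widetilde{\psi}$ annihilates $Rel$: for a generator $v - G(m)v$ with $m: c \to c'$ and $v \in G(c)$, the two images $G(a_c)v$ and $G(a_{c'}) G(m) v = G(a_{c'} \circ m) v$ arise from the parallel morphisms $a_c: c \to t_c$ and $a_{c'} \circ m : c \to t_{c'}$, so by definition of $Rel'$ they agree in $Q$. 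Hence $\widetilde{\psi}$ descends to $\psi: \colim G \to Q$.

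Finally I would verify that $\phi$ and $\psi$ are mutually inverse: the composition $\psi \circ \phi$ acts on $[v] \in G(t)$ by $v \mapsto G(a_t) v = G(\mathrm{id}_t) v = v$, while $\phi \circ \psi$ sends $[v] \in G(c)$ to $[G(a_c)v]$, which equals $[v]$ in $\colim G$ because $v - G(a_c)v \in Rel$. The only mildly subtle point is the well-definedness of $\psi$, but this is exactly where the relations $G(a)v - G(a')v = 0$ in $Q$ are used, and no further category-theoretic input (e.g.\ coherence between different choices of $a_c$) is needed. The final clause of the statement is immediate, since any generating set for $\bigoplus_{t \in \mathfrak{t}} G(t)$ is obtained as the union of generating sets for the individual $G(t)$.
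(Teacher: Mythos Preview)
Your proof is correct and follows essentially the same approach as the paper's: both construct the map $\colim G \to Q$ by sending $v \in G(c)$ to $[G(a)v]$ for some (or a chosen) morphism $a: c \to t$ into the terminal set, verify well-definedness using the pair-of-morphisms relations in $Q$ and functoriality of $G$, and then check inverseness against the evident inclusion-induced map $Q \to \colim G$. The only cosmetic difference is that you fix a distinguished choice $a_c$ for each object and verify both compositions, whereas the paper argues the map is independent of the choice and concludes injectivity from surjectivity together with a one-sided inverse.
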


\begin{proof} Let $T$ denote the $R$-module defined by the quotient of the direct sum as above. For $c$ an object in $\mathcal{D}$ define 
an $R$-homomorphism $G(c)\rightarrow T$ by mapping $v\in G(c)$ to the image of $G(a)c\in G(t)$ for any choice of morphism $a: c\rightarrow t$ and $t\in T$,
and then apply the inclusion followed by projection to get the map $G(t)\rightarrow T$. The element in $T$ does not depend on the choice of $a$ by definition of $T$. Next
suppose that $a: c\rightarrow c'$ is a morphism. Then $v\in G(c)$ is equal to $G(a)v$ in $G(c')$ in the colimit by definition. But if $b: c'\rightarrow t$ is a morphism to some element of $\mathfrak{t}$ then we can choose the morphism $b\circ a: c\rightarrow t$. Because $G$ is a functor we see that we have defined a surjective homomorphism $\varphi: \colim G\rightarrow T$. Similarly the homomorphisms $G(t)\rightarrow \colim G$ for $t\in \mathfrak{t}$ define a homomorphism $\psi: T\rightarrow \colim G$, and $\psi \circ \varphi=\textrm{Id}$. Thus $\varphi $ is also one-to-one. 
\end{proof}

The category $R$-mod can be replaced by any abelian cocomplete (i.\ e.\ closed with respect to taking colimits from small categories), see e.\ g.\ the category $\overline{\mathcal{V}(\alpha )}$ as defined in \ref{def=completion}. 
In the following let $\mathcal{L}$ be any cocomplete abelian subcategory of $R$-mod. Note that in particular coproducts 
and pushouts exist and the notion of exact sequence is defined.

Let $G: \mathcal{D}\rightarrow \mathcal{L}$ be a functor. 
For each subcategory $\mathcal{D'}$ of $\mathcal{D}$ we let $G|\mathcal{D'}$ denote the restriction of the functor $G$ to the subcategory. 
For each subset $S\subset \mathcal{D}^0$ we let $\mathcal{D}[S]$ denote the full subcategory of $\mathcal{D}$ generated by $S$.
Let $\mathfrak{t}$ be a terminal set of $\mathcal{D}$. Then $\colim(G|\mathcal{D}[\mathfrak{t}])$ is the quotient of the module $\oplus_{t\in \mathfrak{t}}G(t)$ by the submodule generated by all 
$v-G(a)v$ for all $a: t\rightarrow t'$ and $v\in G(t)$. We use the convention to consider $v-G(a)v\in G(t)\oplus G(t')$ for $t\neq t'$
while the element is in $G(t)$ for $t=t'$.
Obviously if $\mathfrak{t}$ is a minimal terminal set then $\colim(G|\mathcal{D}[\mathfrak{t}])$ is the quotient of $\oplus_{t\in \mathfrak{t}}G(t)$ 
by only \textit{loop contributions} $v-G(a)v$ for $v\in G(t)$ and $a: t\rightarrow t$. 
In fact if there would exist $a: t\rightarrow t'$ for $t,t'\in \mathfrak{t}$ and $t\neq t'$ then $\mathfrak{t}\setminus \{t\}$ would be terminal and $\mathfrak{t}$ would not be minimal.

For each $S\subset \mathcal{D}^0$ consider the category $\mathcal{D}(S)$ consisting of all pairs $<a,a'>$ with 
$a: c\rightarrow t$ and $a': c\rightarrow t'$ morphisms with $t,t'\in S$ and $c$ any object in $\mathcal{D}$. 
This is a full subcategory of the functor category $\mathcal{D}^{\bullet \leftarrow \bullet \rightarrow \bullet}$, see \cite{M}, page 65.
Note that the morphisms
from $<a_1,a_1'>$ to $<a_2,a_2'>$ are commutative diagrams (natural transformations):
$$
\begin{CD}
t_1@<a_1<<c_1@>a_1'>>t_1'\\
@VbVV @Vb''VV @Vb'VV \\
t_2@<a_2<<c_2@>a_2'>>t_2 
\end{CD}
$$

\vskip.05in

The functor $G$ defines a functor $G': \mathcal{D}^{\bullet \leftarrow \bullet \rightarrow \bullet} \rightarrow \mathcal{L}$ as a composition of functors as follows. 
First apply $G$ to diagrams as above with values in the category $\mathcal{L}^{\bullet \leftarrow \bullet \rightarrow \bullet}$,
but introduce the additional sign 
$$<a,a'>\mapsto <G(a),-G(a')>.$$ 
Then apply the pushout functor:
$$
\begin{CD}
G(c)@>-G(a')>>G(t') \\
@VG(a)VV @VVV \\
G(t)@>>>G(a,a')
\end{CD}
$$

Recall that $G(a,a')=\{G(a)v-G(a')v|v\in G(c)\}$. In particular $G'$ assigns to the object $<a,a'>: t\leftarrow c\rightarrow t'$ the module $G(a,a')$.
We call $G'$ the \textit{induced pushout functor}.

Note that $G'(<a,a'>)=G(a,a')$ by definition.

\vskip .1in

We will usually apply $G'$ to subcategories $\mathcal{D}(S)$ of  $\mathcal{D}^{\bullet \leftarrow \bullet \rightarrow \bullet}$, in particular for 
minimal terminal subsets $\mathfrak{t}$.
Special objects in $\mathcal{D}(\mathfrak{t})$ are of the form $<a,\textrm{Id}>: t'\leftarrow t\rightarrow t$. 
For $t=t'$ those are the \textit{loop contributions} at $t$.
Note that the quotient of $\oplus_{t\in \mathfrak{t}}G(t)$ by the submodule generated by the 
$G(a,\textrm{Id})$ is $\colim(G|\mathcal{D}[\mathfrak{t}])$.      

\vskip .1in

We have the following 

\vskip .1in

\begin{thm} \label{thm=cat1} For each terminal set $\mathfrak{t}$ there are exact sequences
$$\bigoplus_{<a,a'>\in \mathcal{D}(\mathfrak{t})^0}G(a,a')\rightarrow \bigoplus_{t\in \mathfrak{t}}G(t)\rightarrow \colim G\rightarrow 0$$
\and
$$\colim(G'|\mathcal{D}(\mathfrak{t}))\rightarrow \colim(G|\mathcal{D}[\mathfrak{t}]) \rightarrow \colim G\rightarrow 0,$$
where the left hand morphisms are defined using $G(a,a')\subset G(t)\oplus G(t')\subset \bigoplus_{s\in \mathfrak{t}}G(s)$ for $t\neq t'$
and by the inclusion of $G(a,a')\subset G(t)$ otherwise.
\end{thm}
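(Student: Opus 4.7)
The plan is to derive both sequences directly from Lemma~\ref{lem Colim terminal} and the definition of the induced pushout functor $G'$.

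For the first sequence, Lemma~\ref{lem Colim terminal} presents $\colim G$ as the quotient of $\bigoplus_{t\in\mathfrak{t}} G(t)$ by the submodule $R$ generated by all elements $G(a)v - G(a')v$ with $v\in G(c)$, where $<a,a'>$ ranges over spans $t\leftarrow c\rightarrow t'$ with $t,t'\in\mathfrak{t}$ and $c\in\mathcal{D}^0$ arbitrary. By construction, $G(a,a')$ is exactly the submodule of $G(t)\oplus G(t')$ (or of $G(t)$, in the case $t=t'$) consisting of such elements, so the natural map
$$\bigoplus_{<a,a'>\in\mathcal{D}(\mathfrak{t})^0} G(a,a') \;\longrightarrow\; \bigoplus_{t\in\mathfrak{t}} G(t)$$
built from the summand inclusions has image exactly $R$. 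Combined with the surjection onto $\colim G$ supplied by the lemma, this yields exactness of the first sequence.

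For the second sequence, I would first verify that the composition
$$G(a,a')\hookrightarrow \bigoplus_{t\in\mathfrak{t}} G(t)\twoheadrightarrow \colim(G|\mathcal{D}[\mathfrak{t}])$$
is compatible with the morphisms of $G'$ on $\mathcal{D}(\mathfrak{t})$, and hence factors through $\colim(G'|\mathcal{D}(\mathfrak{t}))$. Given a morphism $<a_1,a_1'>\rightarrow<a_2,a_2'>$ with components $b:t_1\to t_2$, $b'':c_1\to c_2$, $b':t_1'\to t_2'$, the functor $G'$ sends $G(a_1)v - G(a_1')v$ to $G(a_2)G(b'')v - G(a_2')G(b'')v$, which by commutativity of the defining square equals $G(b)G(a_1)v - G(b')G(a_1')v$; since $b$ and $b'$ are morphisms inside $\mathcal{D}[\mathfrak{t}]$, both representatives descend to the same element of $\colim(G|\mathcal{D}[\mathfrak{t}])$.

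It then remains to identify the image of the resulting map with the kernel of the canonical surjection $\colim(G|\mathcal{D}[\mathfrak{t}])\twoheadrightarrow\colim G$ (which is surjective since $\mathfrak{t}$ is terminal). That kernel is precisely the image of $R$ under the projection $\bigoplus_{t\in\mathfrak{t}} G(t)\twoheadrightarrow \colim(G|\mathcal{D}[\mathfrak{t}])$, and by the first exact sequence this is the image of $\bigoplus_{<a,a'>} G(a,a')$, which by the factorization just established is the image of $\colim(G'|\mathcal{D}(\mathfrak{t}))$. The main bookkeeping obstacle is the case distinction $t\neq t'$ versus $t=t'$ (where $G(a,a')$ lives respectively inside $G(t)\oplus G(t')$ or inside $G(t)$) and keeping track of the sign introduced in the definition of $G'$, which is what makes the difference $G(a)v - G(a')v$ rather than a sum appear; apart from this, the argument is a direct unwinding of Lemma~\ref{lem Colim terminal}.
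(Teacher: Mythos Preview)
Your proposal is correct and follows essentially the same approach as the paper: derive the first sequence directly from Lemma~\ref{lem Colim terminal}, then check that the map to $\colim(G|\mathcal{D}[\mathfrak{t}])$ equalizes the $G'$-morphisms (via the commuting square with $b,b',b''$) so that it factors through $\colim(G'|\mathcal{D}(\mathfrak{t}))$, and read off exactness of the second sequence from that of the first. Your write-up is in fact more explicit than the paper's about the surjectivity of $\colim(G|\mathcal{D}[\mathfrak{t}])\to\colim G$ and the identification of its kernel, but the underlying argument is the same.
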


\begin{proof} From \ref{lem Colim terminal} we have 
$$\colim G=\bigoplus_{t\in \mathfrak{t}}G(t)/\left< \bigcup_{<a,a'>\in \mathcal{D}(\mathfrak{t})^0}G(a,a')\right>,$$
($<\ >$ denotes the submodule generated by the included set)
which shows exactness of the first sequence. 
Note that each morphism of  $\mathcal{D}(\mathfrak{t})$ 
determines morphisms $b: G(t_1)\rightarrow G(t_2)$ and $b': G(t_1')\rightarrow G(t_2')$ 
such that for $t_i\neq t_i'$ for $i=1,2$ , $b\oplus b'$ maps $G(a_1,a_1')$ to $G(a_2,a_2')$. 
For $t_i=t_i'$ for some $i$ this is similar but looks different in notation.
So these are identified in $\colim(G|\mathcal{D}[\mathfrak{t}])$. Thus the left hand morphism is well-defined
and exactness follows from exactness of the first sequence and the definition of $\colim(G'|\mathcal{D}(\mathfrak{t}))$. 
\end{proof}

\vskip .1in

For $t,t'\in \mathfrak{t}$ let $\mathcal{D}(t,t')$ denote the subcategory of $\mathcal{D}(\mathfrak{t})$, which is generated by all objects 
$t\leftarrow c\rightarrow t'$ and all morphisms

$$
\begin{CD}
t@<a_1<<c_1@>a_1'>>t'\\
@VbVV @Vb''VV @Vb'VV \\
t@<a_2<<c_2@>a_2'>>t' 
\end{CD}
$$

\vskip .1in

Note that if $\mathfrak{t}$ is minimal then the categories $\mathcal{D}(t,t')$ are the components of $\mathcal{D}(\mathfrak{t})$.
It follows that
$$\colim(G'|\mathcal{D}(\mathfrak{t}))=\bigoplus_{(t,t')\in \mathfrak{t}\times \mathfrak{t}}\colim(G'|\mathcal{D}(t,t'))$$

There are relative versions of the above results as follows. Let $S\subset \mathcal{D}^0$. 
Let $S'$ be the set of all objects $t$ in $\mathcal{D}$ for which there exists a morphism $t\rightarrow s$ for some $s\in S$.
Let $\mathcal{D}_S=\mathcal{D}[S']$ be the corresponding full subcategory of $\mathcal{D}$.
Note that by construction the set $S$ is terminal for the category $\mathcal{D}_S$.
Thus we can apply the above results to the categories $\mathcal{D}_S$ with terminal sets $S$.   
But note that $\mathcal{D}_{\mathfrak{t}}=\mathcal{D}$ for $\mathfrak{t}$ any terminal set of the category $\mathcal{D}$. 

We will see below that it is usually \textit{not} true that
$\colim(G|\mathcal{D}_S)$ is the \textit{submodule} of $\colim G$, which is $R$-generated by the set $\cup_{s\in S}G(s)$ 
(because of contributions from  
the category $\mathcal{D}$ outside of $S'$). 
But obviously $\colim(G|\mathcal{D}_S)$ is $R$-generated by $\cup_{s\in S}G(s)$ and projects onto the submodule of $\colim G$ generated by $\cup_{s\in S}G(s)$.
We call $\colim(G|\mathcal{D}_S)$ the $S$-\textit{local} colimit module of the functor $G$.
Often we will apply the above to a subset $\mathfrak{s}$ of an already given minimal terminal subset $\mathfrak{t}$.  
The following is just a reformulation of Theorem \ref{thm=cat1} for the category $\mathcal{D}_S$ with terminal set $S$. 
We use that $\mathcal{D}_S(S)=\mathcal{D}(S)$ because $\mathcal{D}_S^{1}$ contains all possible arrows of $\mathcal{D}$ terminating in
$S$.  

\vskip .1in

\begin{thm} For each $S\subset \mathcal{D}^0$ there are the exact sequences:
$$\bigoplus_{<a,a'>\in \mathcal{D}^0(S)}G(a,a')\rightarrow \bigoplus_{s\in S}G(s)\rightarrow \varprojlim(G|\mathcal{D}_S)\rightarrow 0$$
\textit{and}
$$\colim(G'|\mathcal{D}(S))\rightarrow \colim(G|\mathcal{D}[S])\rightarrow \colim(G|\mathcal{D}_S)\rightarrow 0$$
\end{thm}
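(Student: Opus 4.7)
The plan is to deduce this theorem directly from Theorem \ref{thm=cat1} by applying that result to the subcategory $\mathcal{D}_S$ with $S$ itself serving as the terminal set. First I would verify that $S$ is indeed terminal in $\mathcal{D}_S$: by definition $\mathcal{D}_S=\mathcal{D}[S']$, where $S'$ is the full set of objects $t$ admitting some arrow $t\to s$ with $s\in S$. Hence every object of $\mathcal{D}_S$ has an arrow into $S$, so $S$ is terminal for $\mathcal{D}_S$.

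Next I would check the identification $\mathcal{D}_S(S)=\mathcal{D}(S)$. An object of $\mathcal{D}(S)$ is a pair $\langle a,a'\rangle$ with $a: c\to t$, $a': c\to t'$ and $t,t'\in S$; the existence of either $a$ or $a'$ forces $c\in S'$, so $c$ is an object of $\mathcal{D}_S$, and both arrows lie in $\mathcal{D}_S^1$ (because $\mathcal{D}_S$ is a full subcategory of $\mathcal{D}$). Conversely every object of $\mathcal{D}_S(S)$ is also an object of $\mathcal{D}(S)$. The same argument applies to morphisms of these pair-categories: in any commutative square of arrows into $S$, all intermediate objects $c_1,c_2$ lie in $S'$, and the connecting arrow $b'':c_1\to c_2$ lies in $\mathcal{D}_S^1$ by fullness. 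Thus the pushout functor $G'$ and its domain category coincide whether we start from $\mathcal{D}$ or from $\mathcal{D}_S$.

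With these identifications in place, I would simply invoke Theorem \ref{thm=cat1} applied to the pair $(\mathcal{D}_S,S)$: its conclusion yields exactly the two sequences asserted here, with $\colim(G|\mathcal{D}_S)$ playing the role of $\colim G$ and with the submodules $G(a,a')$ realized inside $G(t)\oplus G(t')$ (or inside $G(t)$ when $t=t'$) exactly as described. Note that the symbol $\varprojlim$ in the first displayed sequence should be read as $\colim$; this is the cokernel description furnished by Lemma \ref{lem Colim terminal}.

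The main (minor) obstacle is purely bookkeeping: one must make sure that $\mathcal{D}_S$ is \emph{large enough} that every pair $\langle a,a'\rangle$ needed to cut out the relations in $\colim(G|\mathcal{D}_S)$ actually lies inside $\mathcal{D}(S)$, and simultaneously \emph{small enough} that no extraneous arrows are introduced. Both follow from the definition of $S'$ as the set of all objects admitting some arrow into $S$ together with the fullness of $\mathcal{D}_S=\mathcal{D}[S']$, as explained above. Once this is checked, the theorem is a direct corollary of Theorem \ref{thm=cat1}.
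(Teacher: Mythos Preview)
Your proposal is correct and follows exactly the paper's own approach: the paper states that this theorem ``is just a reformulation of Theorem \ref{thm=cat1} for the category $\mathcal{D}_S$ with terminal set $S$,'' using that $\mathcal{D}_S(S)=\mathcal{D}(S)$ because $\mathcal{D}_S^{1}$ contains all arrows of $\mathcal{D}$ terminating in $S$. You have supplied precisely these verifications (and correctly flagged the $\varprojlim$ typo for $\colim$).
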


If $\mathfrak{t}$ is a \textit{minimal} terminal set for $\mathcal{D}$ and $\mathfrak{s}\subset \mathfrak{t}$ then $\mathfrak{s}$ is a \textit{minimal} terminal set for $\mathcal{D}_{\mathfrak{s}}$.
Thus we have the following:

\begin{cor} \label{cor=exact} If $\mathfrak{t}$ is a minimal terminal set for $\mathcal{D}$ and $\mathfrak{s}\subset \mathfrak{t}$ then there is the exact sequence:
$$\bigoplus_{s,s'\in \mathfrak{s}}\colim(G'|\mathcal{D}(s,s'))\rightarrow \colim(G|\mathcal{D}[\mathfrak{s}])\rightarrow \colim(G|\mathcal{D}_{\mathfrak{s}})\rightarrow 0$$
\end{cor}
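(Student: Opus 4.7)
The plan is to deduce this directly from the preceding theorem applied to the subset $S=\mathfrak{s}\subset \mathcal{D}^0$, combined with the observation that the hypothesis of minimality forces a clean decomposition of $\mathcal{D}(\mathfrak{s})$ into the categories $\mathcal{D}(s,s')$. First I would invoke the theorem stated just before the corollary with $S:=\mathfrak{s}$, obtaining the exact sequence
$$\colim(G'|\mathcal{D}(\mathfrak{s}))\rightarrow \colim(G|\mathcal{D}[\mathfrak{s}])\rightarrow \colim(G|\mathcal{D}_{\mathfrak{s}})\rightarrow 0.$$
So the only task is to identify the left-hand term with $\bigoplus_{s,s'\in \mathfrak{s}}\colim(G'|\mathcal{D}(s,s'))$.

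Next I would verify the remark already made in the text: since $\mathfrak{t}$ is minimal terminal for $\mathcal{D}$, there are no morphisms between distinct elements of $\mathfrak{t}$, so in particular none between distinct elements of $\mathfrak{s}$. It follows that $\mathfrak{s}$ is itself a minimal terminal set for $\mathcal{D}_{\mathfrak{s}}$. Given any object $<a,a'>:s\leftarrow c\rightarrow s'$ of $\mathcal{D}(\mathfrak{s})$, a morphism in $\mathcal{D}(\mathfrak{s})$ out of it has the form of a commutative diagram with vertical arrows $b:s\to s_2$, $b'':c\to c_2$, $b':s'\to s_2'$ with $s_2,s_2'\in \mathfrak{s}$. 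Minimality forces $b$ and $b'$ to be loops, so $s_2=s$ and $s_2'=s'$. Hence every morphism of $\mathcal{D}(\mathfrak{s})$ lies in some $\mathcal{D}(s,s')$, and these subcategories are pairwise disjoint on objects as well.

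Therefore the categories $\mathcal{D}(s,s')$ are precisely the connected components of $\mathcal{D}(\mathfrak{s})$, and the functor $G'|\mathcal{D}(\mathfrak{s})$ decomposes as a disjoint coproduct of the functors $G'|\mathcal{D}(s,s')$. Since colimits commute with coproducts of categories (the colimit of a functor on a disjoint union of categories is the direct sum of the colimits of the restrictions, which is standard and cited in the text in the form displayed just before the theorem), this gives
$$\colim(G'|\mathcal{D}(\mathfrak{s}))=\bigoplus_{(s,s')\in \mathfrak{s}\times \mathfrak{s}}\colim(G'|\mathcal{D}(s,s')).$$
Substituting this identification into the exact sequence above yields the claimed sequence.

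The only step that requires a moment of care is verifying that $\mathcal{D}(\mathfrak{s})$ really splits along $(s,s')$, because an a priori possibility is that two different pairs $(s_1,s_1')$ and $(s_2,s_2')$ are linked by a zig-zag through some common object with legs to different pairs of targets; this is exactly ruled out by the absence of morphisms among the elements of $\mathfrak{s}$. Once that rigidity is pinned down the rest is a formal substitution.
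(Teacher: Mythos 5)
Your argument is correct and is essentially the paper's own: apply the relative version of Theorem \ref{thm=cat1} with $S=\mathfrak{s}$, observe that minimality of $\mathfrak{t}$ (no morphisms between distinct elements, hence $\mathfrak{s}$ minimal terminal for $\mathcal{D}_{\mathfrak{s}}$) makes the subcategories $\mathcal{D}(s,s')$ the components of $\mathcal{D}(\mathfrak{s})$, and then split $\colim(G'|\mathcal{D}(\mathfrak{s}))$ as the direct sum $\bigoplus_{(s,s')}\colim(G'|\mathcal{D}(s,s'))$. Your explicit check that the vertical arrows $b,b'$ in a morphism of $\mathcal{D}(\mathfrak{s})$ must be loops is exactly the rigidity the paper relies on when it asserts the component decomposition.
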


In the following we will always assume that we have chosen a well-ordering on the terminal set $\mathfrak{t}$, which we assume is countable.

\begin{cor} \textbf{(i)} \label{cor=cases} If $\mathfrak{t}$ is a minimal terminal set and $\mathfrak{s}=\{s\in \mathfrak{t}|s\leq t\}$ and $t\in \mathfrak{t}$, and $\mathfrak{t}$ is minimal then
$$\bigoplus_{s\leq s'\leq t}\colim(G'|\mathcal{D}(s,s'))\rightarrow \colim(G|\mathcal{D}[\mathfrak{s}])\rightarrow \colim(G|\mathcal{D}_{\mathfrak{s}})$$
is exact.

\noindent \textbf{(ii)} For each minimal terminal subset $\mathfrak{t}$ there is the exact sequence:
$$\bigoplus_{s\leq s'}\colim(G'|\mathcal{D}(s,s'))\rightarrow \colim(G|\mathcal{D}[\mathfrak{t}])\rightarrow \colim(G|\mathcal{D})\rightarrow 0$$
\end{cor}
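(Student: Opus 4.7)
The plan is to derive both parts directly from Corollary \ref{cor=exact}, using the well-ordering on $\mathfrak{t}$ only to eliminate redundancy in the double direct sum indexed by $\mathfrak{s}\times \mathfrak{s}$.

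For part (ii), I would apply Corollary \ref{cor=exact} with $\mathfrak{s}=\mathfrak{t}$. Because $\mathfrak{t}$ is terminal in $\mathcal{D}$, every object $c$ of $\mathcal{D}$ admits a morphism to some $t\in \mathfrak{t}$; hence the set $S'$ appearing in the definition of $\mathcal{D}_S$ (for $S=\mathfrak{t}$) is all of $\mathcal{D}^0$, so $\mathcal{D}_{\mathfrak{t}}=\mathcal{D}[\mathcal{D}^0]=\mathcal{D}$. This delivers the desired exact sequence, including the right-most surjection, but with the first direct sum running over all ordered pairs $(s,s')\in \mathfrak{t}\times \mathfrak{t}$. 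For part (i), I would apply Corollary \ref{cor=exact} instead to $\mathfrak{s}=\{s\in \mathfrak{t}\mid s\leq t\}$; since $\mathfrak{t}$ is minimal and terminal for $\mathcal{D}$, the observation recorded just before Corollary \ref{cor=exact} shows $\mathfrak{s}$ is minimal and terminal for $\mathcal{D}_{\mathfrak{s}}$, and we get exactness at $\colim(G|\mathcal{D}[\mathfrak{s}])$ with the first sum over $\mathfrak{s}\times \mathfrak{s}$. (Note that part (i) does not assert surjectivity on the right, so nothing more is needed there.)

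The remaining step, common to both parts, is the reduction of the index set from $\mathfrak{s}\times \mathfrak{s}$ to the well-ordered pairs $s\leq s'$. For $s\neq s'$ in $\mathfrak{s}$ there is an involutive isomorphism of categories $\mathcal{D}(s,s')\cong \mathcal{D}(s',s)$, given on objects by $\langle a,a'\rangle \mapsto \langle a',a\rangle$ and on morphisms by interchanging the two outer components of the commutative ladder. Under the induced pushout functor $G'$ one has $G'(\langle a,a'\rangle)=G(a,a')=\{G(a)v-G(a')v\mid v\in G(c)\}$ and $G'(\langle a',a\rangle)=G(a',a)=\{G(a')v-G(a)v\mid v\in G(c)\}$; these are identical submodules of $G(s)\oplus G(s')$, since they agree set-theoretically up to the overall sign $-1$, which is absorbed by a submodule. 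Consequently the maps from $\colim(G'|\mathcal{D}(s,s'))$ and from $\colim(G'|\mathcal{D}(s',s))$ into $\colim(G|\mathcal{D}[\mathfrak{s}])$ have the same image, and we may discard the summand for $s>s'$ without changing the kernel of $\colim(G|\mathcal{D}[\mathfrak{s}])\to \colim(G|\mathcal{D}_{\mathfrak{s}})$. The diagonal entries $s=s'$ are retained exactly once and account for the loop contributions at each $s\in \mathfrak{s}$, so no double-counting occurs.

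The only part that requires care is the swap symmetry above: one must check that the canonical inclusions $G(s)\oplus G(s')\hookrightarrow \bigoplus_{r\in \mathfrak{s}}G(r)$ and $G(s')\oplus G(s)\hookrightarrow \bigoplus_{r\in \mathfrak{s}}G(r)$ really send $G(a,a')$ and $G(a',a)$ to the same subset, and that the sign discrepancy is harmless because we are generating a submodule rather than taking a quotient by an ordered relation. With this verified the two exact sequences fall out of Corollary \ref{cor=exact}, and I do not anticipate any further obstacle.
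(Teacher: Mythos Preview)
Your proposal is correct and follows essentially the same approach as the paper: apply Corollary \ref{cor=exact} (with $\mathfrak{s}=\mathfrak{t}$ for part (ii)) and then reduce the index set from all ordered pairs to pairs $s\leq s'$ via the swap isomorphism $\mathcal{D}(s,s')\cong \mathcal{D}(s',s)$, noting that their images in $\colim(G|\mathcal{D}[\mathfrak{s}])$ coincide. The paper's proof is a one-sentence version of exactly this; your expanded treatment of the sign issue and the diagonal terms is accurate and fills in the details the paper leaves implicit.
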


\begin{proof} This follows from the observation that even though $\mathcal{D}(s,s')\neq \mathcal{D}(s',s)$ those two categories
are naturally isomorphic and their images 
in \\
$\colim(G|\mathcal{D}[\mathfrak{s}])$ respectively $\colim(G|\mathcal{D}[\mathfrak{t}])$ coincide. \end{proof}

\vskip .1in

Note that the sequence in (i) is also exact if the leftmost term is replaced by 
$$\underset{s\leq s'\leq t}{\bigoplus_{<a,a'>\in \mathcal{D}^0(s,s')}}G(a,a'),$$
which projects onto the corresponding colimit.
Also we have the exact sequence:
$$\underset{s\leq s'\leq t}{\bigoplus_{<a,a'>\in \mathcal{D}^0(s,s')}}G(a,a')\rightarrow \bigoplus_{s\leq t}G(s)\rightarrow \colim(G|\mathcal{D}_{\mathfrak{s}})\rightarrow 0$$
The previous observations suggest to consider the following inductive computation of $\colim(G|D_{\mathfrak{t}})=\colim G$ for a minimal terminal 
set for $\mathcal{D}$. First introduce the sets:
$$\mathfrak{s}(t)=\{s\in \mathfrak{t}|s\leq t\}$$
and
$$\mathfrak{s}(t)_-=\{s\in \mathfrak{t}|s<t\}.$$
Note that
$$\colim G=\lim_{t\in \mathfrak{t}}\colim(G|\mathcal{D}_{\mathfrak{s}(t)}).$$
using the natural homomorphisms
$$\colim(G|\mathcal{D}_{\mathfrak{s}(t)_-})\rightarrow \colim(G|\mathcal{D}_{\mathfrak{s}(t)}).$$
and the usual inductive limit in the category $\mathcal{L}$. 

Now 
$$\bigoplus_{s\leq t}G(s)=\bigoplus_{s<t}G(s)\oplus G(t),$$
and from the exact sequence:
$$\underset{s\leq s'\leq t}{\bigoplus_{<a,a'>\in \mathcal{D}^0(s,s')}}G(a,a')\rightarrow \bigoplus_{s\leq t}G(s)\rightarrow \colim(G|\mathcal{D}_{\mathfrak{s}(t)})\rightarrow 0$$
and we deduce

\begin{thm} \label{thm=minimal terminal} For a minimal terminal set $\mathfrak{t}$ there are the exact sequences:
$$\underset{s\leq t}{\bigoplus_{<a,a'>\in \mathcal{D}(s,t)}}G(a,a')\rightarrow \colim(G|\mathcal{D}_{\mathfrak{s}(t)_-})\oplus G(t)\rightarrow 
\rightarrow \colim(G|\mathcal{D}_{\mathfrak{s}(t)})\rightarrow 0$$
\end{thm}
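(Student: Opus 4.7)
The plan is to realize the sequence as the natural quotient of Theorem \ref{thm=cat1}'s first exact sequence for $\mathcal{D}_{\mathfrak{s}(t)}$, obtained by factoring out the relations already responsible for $\colim(G|\mathcal{D}_{\mathfrak{s}(t)_-})$. Since $\mathfrak{t}$ is minimal, both $\mathfrak{s}(t)$ and $\mathfrak{s}(t)_-$ are minimal terminal in the respective full subcategories, and using the swap $\mathcal{D}(s,s')\cong\mathcal{D}(s',s)$ of the proof of Corollary \ref{cor=cases} I may restrict indexing to $s\leq s'$. Applying Theorem \ref{thm=cat1} to $\mathcal{D}_{\mathfrak{s}(t)}$ and to $\mathcal{D}_{\mathfrak{s}(t)_-}$ yields the exact sequences
$$\bigoplus_{\substack{<a,a'>\in\mathcal{D}^0(s,s')\\s\leq s'\leq t}}G(a,a')\longrightarrow\bigoplus_{s\leq t}G(s)\longrightarrow\colim(G|\mathcal{D}_{\mathfrak{s}(t)})\to 0$$
and
$$\bigoplus_{\substack{<a,a'>\in\mathcal{D}^0(s,s')\\s\leq s'<t}}G(a,a')\longrightarrow\bigoplus_{s<t}G(s)\longrightarrow\colim(G|\mathcal{D}_{\mathfrak{s}(t)_-})\to 0.$$

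I would then split the first sequence according to whether $s'<t$ or $s'=t$: the codomain decomposes as $\bigoplus_{s<t}G(s)\oplus G(t)$, and the domain decomposes as the domain of the second sequence above together with the piece indexed by $s\leq t$ and $<a,a'>\in\mathcal{D}(s,t)$. The decisive observation is that every generator of the $s'<t$ piece has its $G(t)$-component equal to zero, since both of its arrows $a,a'$ terminate in $\mathfrak{s}(t)_-$; hence this piece maps into $\bigoplus_{s<t}G(s)$ only, and by exactness of the second sequence its image there is precisely the kernel of the projection onto $\colim(G|\mathcal{D}_{\mathfrak{s}(t)_-})$. Passing to the quotient by this subcomplex replaces $\bigoplus_{s<t}G(s)$ by $\colim(G|\mathcal{D}_{\mathfrak{s}(t)_-})$ in the middle, leaves $G(t)$ alone, and discards exactly the $s'<t$ relations on the left, giving the stated three-term exact sequence.

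The only care needed is bookkeeping: one must verify that the $s\leq s'$ restriction is lossless (same images in the middle after the $\mathcal{D}(s,s')\cong\mathcal{D}(s',s)$ swap) and that the two exact sequences above fit into a commutative diagram on which the quotienting legitimately produces a three-term exact sequence in the cocomplete abelian category $\mathcal{L}$. Both points are straightforward universal-property arguments; no ingredient beyond Theorem \ref{thm=cat1} and the inductive set-up of $\mathfrak{s}(t)_-\subset\mathfrak{s}(t)$ is needed.
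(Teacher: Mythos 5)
Your argument is correct and is essentially the paper's own deduction: the theorem is obtained there from exactly the two exact sequences of Theorem \ref{thm=cat1} for the terminal sets $\mathfrak{s}(t)$ and $\mathfrak{s}(t)_-$ together with the splitting $\bigoplus_{s\leq t}G(s)=\bigoplus_{s<t}G(s)\oplus G(t)$, with the paper leaving the quotient step implicit ("we deduce"). Your proposal simply spells out that routine factoring-out of the $s'<t$ relations, including the harmless $s\leq s'$ normalization via $\mathcal{D}(s,s')\cong\mathcal{D}(s',s)$, so nothing is missing.
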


From now on we will always assume that $\mathfrak{t}$ is countable. So we can choose some order such that the sets $\mathfrak{s}(t)$ are finite.
We will see that in our applications the set $\mathfrak{t}$ is the set of isotopy classes of incompressible surfaces in $M$ bounding a closed $1$-manifold $\alpha $ in $\partial M$.
Of course then we can just choose an arbitrary counting 
function, and the Bar-Natan module appears as the direct limit. But, as we will see, there are natural orderings on the set of isotopy classes of incompressible 
surfaces in $M$ closely related to geometry. Often these orderings do \textit{not} satisfy the above finiteness condition.   

\vskip .1in

We will apply the above usually for \textbf{(ii)} in Corollary \ref{cor=cases}. 
Suppose that $\mathfrak{t}$ is a minimal well-ordered terminal set for $\mathcal{D}$.  
For $s\leq t$ and $s,t\in \mathfrak{t}$ let $\mathfrak{d}(s,t)$ denote a set of terminal elements of the category $\mathcal{D}(s,t)$.
Then we can introduce the following 
directed multi-graph $\mathcal{G} (\mathfrak{t})$ to organize the relations. 
The vertices of $\mathcal{G} (\mathfrak{t})$ are the elements of $\mathfrak{t}$. There is a unique directed edge from $s$ to $t$ for each element in
$\mathfrak{d}(s,t)$ and $s\leq t$. 
Note that a priori $\mathcal{G}(\mathfrak{t})$ is \textit{not} a category.
In the next section we will introduce a labelled version of the graph $\mathcal{G}(\mathfrak{t})$, which will contain the complete information 
necessary to calculate the colimit.

If pullbacks are defined in  $\mathcal{D}$ then a composition operation of elements can be defined:
$\mathfrak{d}(t,u)\times \mathfrak{d}(s,t)\rightarrow \mathfrak{d}(s,u)$
for $s\leq t\leq u'$. In general the category $\mathcal{D}$ will not have pullbacks defined but we will be able to use the
pullbacks in $\mathcal{L}$ in a similar way, and we will always assume that $\mathcal{L}$ has pullbacks.
In this case we can make the following observation. Suppose that we have the 
following commutative diagram in $\mathcal{D}$:

$$
\begin{CD}
c''@>>>c'@>>>u \\
@VVV @VVV @. \\
c @>>> t @. \\
@VVV @. @. \\
s @. @.
\end{CD}
$$

It is a simple exercise to prove the following: 

\begin{prop} (pullback principle)  Suppose that $s\leftarrow c''\rightarrow u$ factors in a commutative diagram as above through 
$s\leftarrow c\rightarrow t$ and $t\leftarrow c'\rightarrow u$. Then its contribution to the relations of the corresponding colimit module are already contained 
in those generated by the factors.
\end{prop}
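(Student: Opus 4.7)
The plan is direct: exhibit the relation coming from $s \leftarrow c'' \to u$ as an explicit linear combination of two relations, one from $s \leftarrow c \to t$ and one from $t \leftarrow c' \to u$, using the element $v \in G(c'')$ together with its images under $G$ of the two morphisms from $c''$.

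First I would name the arrows. Write $\alpha : c\to s$, $\beta : c\to t$, $\gamma : c'\to t$, $\delta : c'\to u$, $\mu : c''\to c$, $\nu : c''\to c'$, so that commutativity of the given diagram says $\beta\circ\mu = \gamma\circ\nu$ as morphisms $c''\to t$; in particular $G(\beta)\circ G(\mu) = G(\gamma)\circ G(\nu)$ in $\mathcal{L}$. The relation contributed by $s\leftarrow c''\to u$ to $\colim G$ is, for each $v \in G(c'')$, the element
$$G(\alpha\mu)(v) - G(\delta\nu)(v) \in G(s) \oplus G(u),$$
viewed inside $\bigoplus_{t\in\mathfrak{t}} G(t)$.

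Next, given $v \in G(c'')$, I would set $w := G(\mu)(v) \in G(c)$ and $w' := G(\nu)(v) \in G(c')$. The factor $s \leftarrow c \to t$ contributes the relation $G(\alpha)(w) - G(\beta)(w) = G(\alpha\mu)(v) - G(\beta\mu)(v)$, and the factor $t \leftarrow c' \to u$ contributes $G(\gamma)(w') - G(\delta)(w') = G(\gamma\nu)(v) - G(\delta\nu)(v)$. Adding these two elements of $G(s) \oplus G(t) \oplus G(u)$ gives
$$G(\alpha\mu)(v) + \bigl(G(\gamma\nu)(v) - G(\beta\mu)(v)\bigr) - G(\delta\nu)(v),$$
and the bracketed $G(t)$-term vanishes by the commutativity identity $G(\beta\mu) = G(\gamma\nu)$. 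What remains is precisely $G(\alpha\mu)(v) - G(\delta\nu)(v)$, the relation from $s\leftarrow c''\to u$, realized inside the submodule generated by the two factor relations.

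There is no real obstacle; the only thing to watch is the bookkeeping in the direct sum, namely that the $G(t)$-contributions from the two factor relations appear with opposite signs in the chosen convention (they are on the $c$-to-$t$ and $c'$-to-$t$ sides respectively) so that the commutativity of the diagram cancels them exactly. Linearity in $v$ then shows the entire submodule of relations produced by $s \leftarrow c'' \to u$ lies in the submodule generated by the factor relations, which is the claim.
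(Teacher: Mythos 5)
Your proof is correct and is exactly the intended argument: the paper offers no written proof (it declares the proposition ``a simple exercise''), and your computation --- adding the relation $G(\alpha)w-G(\beta)w$ at $w=G(\mu)v$ to the relation $G(\gamma)w'-G(\delta)w'$ at $w'=G(\nu)v$ and cancelling the $G(t)$-terms via $G(\beta\mu)=G(\gamma\nu)$ --- is precisely that exercise. The sign bookkeeping you flag is handled correctly, and linearity in $v$ finishes the claim.
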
 

Note that we do \textit{not} require that $c\leftarrow c''\rightarrow c'$ is the pullback of $c\rightarrow t\leftarrow c'$ in the above diagram.  
By definition pullbacks are \textit{minimal} $c\leftarrow \tilde{c}\rightarrow c'$ in the sense that there is $c''\rightarrow \tilde{c}$ making the obvious diagrams commutative for each $c\leftarrow c''\rightarrow c'$ as above. 
If the pullback diagram exists then it is unique up to isomorphism.

The interesting problem will be to recognize geometrically when two morphisms
in the categories $\mathcal{C}$ defined in section 1 
have a pullback in this category. 
We call an arrow in $\mathcal{G} (\mathfrak{t})$ 
\textit{irreducible} if the arrow cannot be decomposed \textit{as above}.
More precisely we say that an arrow in $\mathfrak{d}(s,u)$ is reducible if there exists some $t\in \mathfrak{t}$ and arrows in 
$\mathfrak{d}(s,t)$ and $\mathfrak{d}(t,u)$ that compose in the above way.
Note that this requires $s\leq t\leq u$.  
By the pullback principle it suffices to consider irreducible arrows of $\mathcal{G} (\mathfrak{t})$.     
  
\vskip .1in

For completeness we compare the $S$-local colimit module $\colim G|\mathcal{D}_S$ with the submodule of $\colim G$ which is generated 
by the $G(t)$ for $t\in S$, where $S$ a subset of the set of objects (note that set of all objects is also a terminal set). The difference is easily seen in the following example. Consider $\mathcal{D}=\mathcal{C}(M)$ for $M=S^3\setminus N(K)$ and $K$ is a nontrivial knot in $S^3$. 
Then let $S=\{\emptyset \}\subset \mathcal{D}^0$.
Then the set $S'$ will consist of all \textit{completely compressible} surfaces in $M$, 
i.\ e.\ surfaces for which there exist a sequence of neck-cuttings and 
sphere deletions completely deleting the surface.
Recall that $\mathcal{D}_S=\mathcal{D}[S']$ has the object set $S'$ and the morphism set all morphisms in $\mathcal{D}$.
It is then easy to see that the local module $\colim(G|\mathcal{D}_S)$ is just $V$ generated by $\emptyset $.   

Then the local module is just $V$ while the submodule 
of $\mathcal{V}(M)$ generated by the $2$-sphere colored in an arbitrary way is the image of $\varepsilon $ in $R$. 
For a subset $T$ of the set of objects of a small category $\mathcal{D}$ let $\overline{T}$ be the union of the set of all objects in $T$ with the set of all objects $s$
in $\mathcal{D}$ for which there exists a morphism $t\rightarrow s$ with $t\in T$. We will apply this to $T=S'$ with $S'$ defined as above, and call
$\overline{S'}$ the \textit{closure} of $S$ in the set of objects of $\mathcal{D}$. 
For each $t\in \overline{S'}\setminus S'$ let $H_S(t)$ be the the union of the images of the linear homomorphisms $G(a): G(s)\rightarrow G(t)$ over all $s\in S'$. 
For $t\in S'$ let $H_S(t)=G(t)$.
Then define a new category $\overline{\mathcal{D}}_S$, the closure of 
the $\mathcal{D}_S$ in $\mathcal{D}$ as follows. It is the subcategory of $\mathcal{D}$ with object set $\overline{S'}$ and morphisms 
all morphisms $a: t\rightarrow s$ for $s,t\in \overline{S'}$ such that $G(a)$ maps $H_S(t)$ into $H_S(s)$. Then we define a new functor $H_S: \overline{\mathcal{D}}_S\rightarrow \mathcal{L}$ 
as follows: On objects in $S'$ it is the functor $\mathcal{D}$, and on objects in $\overline{S'}\setminus S'$ it assigns $H_S(t)$ as defined above. 
On the morphisms it is just the corresponding assignment of the category $\mathcal{D}$.
Then we have almost by construction the following:

\vskip .1in

\begin{prop} The submodule of $\colim G$, which is generated by the union of $G(t)$ for all $t\in S$ is naturally isomorphic to
$\colim(H_S)$. \end{prop}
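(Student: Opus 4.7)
The plan is to construct a natural homomorphism $\phi: \colim(H_S)\to \colim G$ and show that it induces an isomorphism onto $N_S:=\langle \bigcup_{s\in S}G(s)\rangle$. First, $\phi$ comes from the universal property: the inclusions $H_S(t)\hookrightarrow G(t)$ for $t\in \overline{S'}$, composed with the projection to $\colim G$, form a cocone on $H_S$, because the morphisms in $\overline{\mathcal{D}}_S^1$ are by construction those $a$ for which $G(a)$ restricts to a map $H_S(t)\to H_S(s)$.

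Second, I identify $\textrm{Im}(\phi)=N_S$. The containment $N_S\subset \textrm{Im}(\phi)$ is immediate from $H_S(s)=G(s)$ for $s\in S$. For the reverse containment I check that $H_S(t)\subset N_S$ for every $t\in \overline{S'}$: if $t\in S'$, a chosen morphism $b:t\to s$ with $s\in S$ identifies $G(t)$ with its image in $G(s)\subset N_S$ inside $\colim G$; if $t\in \overline{S'}\setminus S'$, each generator $G(a)(v)$ of $H_S(t)$, with $v\in G(s')$ and $s'\in S'$, is identified in $\colim G$ with $v\in N_S$ by the previous case.

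Third, I attack injectivity by constructing an inverse $\psi: N_S\to \colim(H_S)$. Given $x\in N_S$ written as $\sum_j r_j[w_j]$ with $w_j\in G(s_j)=H_S(s_j)$ and $s_j\in S$, I set $\psi(x)=\sum_j r_j\{w_j\}$, where $\{\cdot\}$ denotes the class in $\colim(H_S)$. The main obstacle is well-definedness: if $\sum_j r_jw_j=\sum_i s_i(v_i-G(a_i)(v_i))$ in $\bigoplus_c G(c)$ with arbitrary arrows $a_i:c_i\to c_i'$ of $\mathcal{D}$, I must show $\sum_jr_j\{w_j\}=0$ in $\colim(H_S)$.

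The structural observation that I plan to exploit is that $S'$ is closed under arrows-in (a composite $t\to s'\to s\in S$ places $t$ in $S'$) and $\overline{S'}$ is closed under arrows-out (a composite $s'\to t\to s''$ with $s'\in S'$ places $s''$ in $\overline{S'}$), so arrows of $\mathcal{D}$ linking $\overline{S'}$ to its complement run only from $\mathcal{D}^0\setminus \overline{S'}$ into $\overline{S'}\setminus S'$. Splitting the $a_i$ into arrows interior to $\overline{S'}$, interior to its complement, and boundary-crossing, and projecting the given identity onto the complementary summand $\bigoplus_{c\notin \overline{S'}} G(c)$, I obtain a self-contained relation in the complement that cancels the source-side contributions of the boundary-crossing arrows. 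The hard part will be rewriting the remaining interior-to-$\overline{S'}$ terms purely as $\overline{\mathcal{D}}_S$-relations on $\bigoplus_{t\in \overline{S'}}H_S(t)$, which requires using the very definition of $H_S$ on $\overline{S'}\setminus S'$ as the submodule generated by images from $S'$ to keep all intermediate elements inside the $H_S$-subfunctor. Once well-definedness is in hand, the identities $\psi\circ\phi=\textrm{Id}$ and $\phi\circ\psi=\textrm{Id}$ follow immediately on generators, completing the proof.
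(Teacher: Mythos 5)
Your first two steps are fine and constitute the uncontroversial half of the statement (which is all the paper itself records, calling the proposition ``almost by construction''): the inclusions $H_S(t)\subset G(t)$ do form a cocone because $\overline{\mathcal{D}}_S$ retains only arrows preserving the subfunctor, and the image of the induced map $\phi\colon \colim(H_S)\to\colim G$ is exactly the submodule $N_S$ generated by the $G(s)$, $s\in S$. The genuine gap is precisely the step you defer as ``the hard part'', and it is not a bookkeeping issue. After projecting the identity $\sum_j r_jw_j=\sum_i s_i\bigl(v_i-G(a_i)v_i\bigr)$ onto $\bigoplus_{t\in\overline{S'}}G(t)$, the boundary-crossing arrows $a_i\colon c_i\to t_i$ with $c_i\in\mathcal{D}^0\setminus\overline{S'}$ and $t_i\in\overline{S'}\setminus S'$ leave behind the terms $-s_iG(a_i)v_i\in G(t_i)$. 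These are pushforwards from objects outside $\overline{S'}$: they need not lie in $H_S(t_i)$, and even when they do, the resulting identity lives in $\bigoplus_{t\in\overline{S'}}G(t)$ and there is no mechanism for rewriting it as a combination of relations $u-H_S(a)u$ with $a\in\overline{\mathcal{D}}_S^1$ and $u$ in the subfunctor. So well-definedness of $\psi$ does not follow from the decomposition you set up.

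In fact $\psi$ can fail to be well defined, so the route cannot be completed without an extra hypothesis. Take objects $s,c,t,u$ with non-identity arrows $f\colon c\to s$, $g\colon c\to t$, $h_1,h_2\colon u\to t$ (no composable non-identity pairs), let $G$ be constantly $R$ with $G(f)=G(g)=G(h_1)=\mathrm{Id}$ and $G(h_2)=0$, and let $S=\{s\}$. Then $S'=\{s,c\}$, $\overline{S'}=\{s,c,t\}$ and $u\notin\overline{S'}$. In $\colim G$ the relation from $h_2$ kills $G(u)$, the relation from $h_1$ then kills $G(t)$, and via $g$ and $f$ the generator of $G(s)$ dies as well (explicitly, $e_s=(e_c-G(g)e_c)-(e_c-G(f)e_c)+(e_u-G(h_2)e_u)-(e_u-G(h_1)e_u)$), so $N_S=0$; but $\overline{\mathcal{D}}_S$ only contains $f$ and $g$, so $\colim(H_S)\cong R$. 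Hence the kernel of $\bigoplus_{s\in S}G(s)\to\colim G$ can be strictly larger than what is visible to $\overline{\mathcal{D}}_S$, and $\phi$ need not be injective. To salvage the argument you must identify and use a condition that excludes arrows entering $\overline{S'}$ from its complement (or suitable injectivity/factorization properties of the maps $G(a)$ available in the Bar-Natan category); without such a hypothesis the claimed isomorphism itself, not just your proof of it, breaks down at this level of generality.
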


\begin{exmp} Consider the case of the Bar-Natan catageory $\mathcal{C}$ for some $(M,\alpha )$. Let $S=\{s\}$ for some 
$s\in \mathcal{C}$. Then the set of objects of $\mathcal{D}_{[S]}$ is the set of all objects in the component of $s$. Geometrically this is
the set of all isotopy classes of surfaces homologically equivalent to $s$ in the way that the two orientable surfaces are homologous up to a choice of orientations.
In fact, it follows from \cite{KL} that there always exists an embedded $3$-manifold in $M\times I$ bounding any two surfaces $F,F'$ 
in the same homology class. 
By handle operations we can find morphisms $F\leftarrow F'' \rightarrow F'$. Geometrically this means that for any two given surfaces $F,F'$
in $M$ bounding $\alpha $ there exists a linear combination of $V$-colored surfaces with underying surface $F'$ in the submodule of 
$\mathcal{W}(M,\alpha )$ generated by the collection of $V$-colored surfaces with underlying surface $F$.
\end{exmp}

Local modules are interesting on their own. We discuss some of the local modules for the Bar-Natan functor in section .  

\vskip .1in

The following is immediate from the definition of the colimit of a functor into $R$-mod or $\mathcal{L}$. 
Let $a: c\rightarrow d$ be a morphism. Define a new category $\mathcal{D}^a$ by deleting all morphisms of the form $a''\circ a\circ a'$ for all 
morphisms $a'',a'$ in $\mathcal{D}$ from the category. 

\begin{lem} Suppose that a morphism $a: c\rightarrow d$ has a left-inverse $b: d\rightarrow c$ in $\mathcal{D}$.
Let $G: \mathcal{D}\rightarrow \mathcal{L}$ be any functor. 
Let $G^a: \mathcal{D}^a \rightarrow \mathcal{L}$ be the restriction. Then $\colim G=\colim G^a$.
\end{lem}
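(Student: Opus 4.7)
The plan is to show that the natural surjection $\colim G^a \twoheadrightarrow \colim G$ — arising because $\mathcal{D}^a$ carries strictly fewer defining relations in the colimit quotient than $\mathcal{D}$ — is also injective. Equivalently, for each deleted morphism $m = a''\circ a\circ a'$, the defining relation $v - G(m)v$ must already lie in the submodule of $\bigoplus_x G(x)$ generated by relations coming from morphisms of $\mathcal{D}^a$.

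First I would handle the special case $m=a$ itself. For $v\in G(c)$, the identity $b\circ a = \mathrm{Id}_c$ together with functoriality give $G(b)G(a)v = v$, so
\[v - G(a)v \;=\; -\bigl(G(a)v - G(b)G(a)v\bigr)\]
as elements of $G(c)\oplus G(d)$. The right-hand side is (the negative of) the defining relation attached to the morphism $b:d\to c$, evaluated at $G(a)v \in G(d)$. Thus the relation contributed by $a$ is already generated by the relation contributed by $b$.

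For a general deleted morphism $m = a''\circ a\circ a' : x \to y$, I telescope in $\bigoplus_s G(s)$:
\[v - G(m)v \;=\; \bigl[v - G(a')v\bigr] + \bigl[G(a')v - G(a)G(a')v\bigr] + \bigl[G(a)G(a')v - G(a'')G(a)G(a')v\bigr].\]
The outer two summands are defining relations of $a'$ and $a''$, while the middle summand is the relation of $a$ applied at $u := G(a')v \in G(c)$, which by the previous paragraph is generated by the relation of $b$. Hence $v - G(m)v$ lies in the submodule generated by the relations from $\{a',a'',b\}$, proving injectivity whenever these three morphisms belong to $\mathcal{D}^a$.

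The main obstacle is the corner case in which one of $a'$, $a''$, or $b$ is itself absent from $\mathcal{D}^a$ because it also factors through $a$ (note in particular that $b = b\circ a\circ b$). I would handle this by iterating the telescoping rewrite, inducting on the minimum number of occurrences of $a$ in a factorization of the morphism under consideration. Each step strictly decreases this count, and the induction terminates at morphisms containing no occurrence of $a$, which are precisely those preserved in $\mathcal{D}^a$. Combined with the natural surjection, this yields the claimed isomorphism.
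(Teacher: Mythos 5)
Your first two paragraphs are essentially the paper's own proof. The paper's argument consists precisely of your first step: for $v\in G(c)$ the relation identifying $v$ with $G(a)v$ is the relation of $b$ evaluated at $w=G(a)v$, using $G(b)G(a)=G(b\circ a)=G(\mathrm{Id})=\mathrm{Id}$ (the paper treats $c\neq d$ and remarks that $c=d$ is easier). The telescoping of a general deleted composite $a''\circ a\circ a'$ into the relations of $a'$, $a$, $a''$ is left implicit there; you spell it out, which is correct and standard.

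The difficulty is your third paragraph, and as stated it does not work. Your induction measure, the minimum number of occurrences of $a$ in a factorization, is already $0$ for $b$ (take the trivial factorization $b=b$), yet $b$ is exactly the morphism whose availability you are worried about; and rewriting $b$ via $b=b\circ a\circ b$ telescopes into the relations of $b$, $a$, and $b$ again, so nothing decreases and the rewriting is circular. Moreover ``admits some factorization avoiding $a$'' is not the same as ``is preserved in $\mathcal{D}^a$'' under the literal reading of the definition, which deletes every morphism admitting \emph{some} factorization $a''\circ a\circ a'$. In fact under that literal reading no induction could succeed, because the statement itself breaks: in the category generated by $a:c\to d$ and $b:d\to c$ with $b\circ a=\mathrm{Id}_c$, each of $a$, $b=b\circ a\circ b$, $a\circ b$, and even $\mathrm{Id}_c=b\circ a\circ \mathrm{Id}_c$ is of the forbidden form, so literal deletion does not even leave a category, and retaining identities by fiat leaves a discrete category with $\colim G^a=G(c)\oplus G(d)$, which differs from $\colim G$ in general. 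The intended reading, made explicit in the remark following the lemma and in its application to trivial $2$-handle morphisms, is at the level of the presentation: one removes the generating arrow(s) $a$ together with the composites built using them, so $b$ and the factors $a'$, $a''$ survive in $\mathcal{D}^a$ and their relations remain available. With that reading your corner case does not arise, and your first two paragraphs already complete the proof.
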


\begin{proof} Let us assume $c\neq d$. The relations from the morphism $a$ are \\ $(v,G(a)v)\in G(c)\oplus G(d)$ for all $v\in G(c)$ while the relations from $b$ are $(G(b)w,w)\in G(c)\oplus G(d)$ for all 
$w\in G(d)$. So if we apply the second relation to $w=G(a)$ and use $\textrm{Id}=G(\textrm{Id})=G(b\circ a)=G(b)\circ G(a)$ we see that the relations from $b$ 
imply the relations from $a$. The case $c=d$ is even easier. \end{proof}

\begin{rem} Because of the previous lemma and Remark \ref{rem=trivial}. we will from now on delete all trivial $2$-handle morphisms
from our category $\mathcal{C}$. More precisely, we first eliminate these morphisms from the directed graph $\mathfrak{c}$ and thus from the free category $\mathcal{F}\mathfrak{c}$, and we also eliminate
also all compositions with these morphisms. Then we
take the quotient by the relations defined by our restricted form of isotopy as before. Note that trivial $2$-handle morphisms are not in any nontrivial way compositions of 
other morphisms. (Of course it could appear as the composition of a self-isotopy morphisms with the composition of its inverse with the trivial 
morphism.)  
We can also go back to the very definition of the category and only include bordisms in $M\times I$ with $2$- and $3$-handles but no trivial $2$-handles attached 
to surfaces in $M$.
On the level of the Bar-Natan skein module this means that we reduce to neck-cuttings on curves,
which are essential on the surface, except if none of the resulting $2$-sphere components bounds a $3$-ball in $M$.
This can only occur when $M$ is 
reducible. It should be observed that the effect of a trivial $2$-handle on a surface is the same as that of attaching a $0$-handle (even though the 
bordisms are different). In fact, the trivial $2$-handle can be cancelled by a $3$-handle, and the bordism representing this composition is 
isotopic to the bordism where these have been cancelled. Similarly the trivial $0$-handle can be cancelled by a $1$-handle. 
\end{rem}

We like to observe that in the case of the Bar-Natan module of a closed connected oriented $3$-manifold the submodule of $\mathcal{W}(M)$, which is generated by the empty surface $\emptyset $, is isomorphic to the $\{\emptyset \}$-local module. This is because the closure of the set coincides with the set in this case.
This is because there are \text{no arrows out of this} set. Note that the same is always true for certain subsets of the final set of incompressible surfaces.
It will turn out to be true for \textit{all} subsets of $\mathfrak{t}$ in a large class of $3$-manifolds, which will be discussed in the next section, including the irreducible $3$-manifolds. 

\section{Towards presenting Bar-Natan modules}\label{sec=towards}

Recall that a properly embedded surface $F$ embedded in $M$ is \textit{compressible} if there is a loop on $F$, which does not bound a disk on $F$ but bounds a disk 
in $M$ intersecting $F$ only in the loop, or $F$ has a $2$-sphere component $S$ bounding a $3$-ball in $M$ intersecting $F$ only in $S$. Otherwise $F$ is called \textit{incompressible}. $M$ is \textit{irreducible} if every embedded $2$-sphere in $M$ bounds a $3$-ball. 

\begin{defn} Suppose $M$ is a connected orientable $3$-manifold. The pair $(M,\alpha )$ is called \textit{Haken-reducible} if there exists an
incompressible surface $F$ in $M$ bounding $\alpha $ 
containing a $2$-sphere component $S$, which does not bound a $3$-ball 
and is not parallel to any component of $F\setminus S$, and $F\setminus S\neq \emptyset$. 
The pair $(M,\alpha )$ is called \textit{Haken-irreducible} if it is not Haken-reducible.
\end{defn}
 
Note that $M$ irreducible implies $(M,\alpha )$ Haken-irreducible for all $\alpha $. The converse is not true.
$S^2\times S^1$ and $(S^2\times I,\emptyset)$ are Haken-irreducible but not irreducible. But $(S^2\times I,\alpha )$ is Haken-reducible for $\alpha \neq \emptyset $.
If a connected $3$-manifold $M$ contains a non-trivial $2$-sphere $S$ and it also contains a surface $F$ bounding $\alpha $, which is not parallel to $S$ then if the transversal intersection $S\cap F\neq \emptyset$ we can do double point surgery on the intersection curves to get a surface $F'$, which will be disjoint from $S$. Then $F'\cup S$ is a surface showing that $(M,\alpha )$ is Haken-reducible. 
A compact orientable $3$-manifold $M=(M,\emptyset )$ is Haken-irreducible if and only if 
it has a component with a prime factorization with at least three factors, or a prime decomposition with two factors and one of them is
diffeomorphic to $S^2\times S^1$ or to a Haken manifold (which is a compact orientable irreducible $3$-manifold containing a two-sided incompressible surface). This follows easily from \cite{He}, 3.8.
Given an incompressible surface $F\subset M$ with $\partial F=\alpha $, with $M$ not a connected sum, there is a unique up to isotopy system of incompressible nonparallel $2$-spheres 
in $M\setminus F$ which is maximal in the sense that none of these $2$-spheres is a connected sum of two other nontrivial $2$-spheres. Call this a 
\textit{maximal system} for $F$. 
The first assertion of the following theorem is the observation used by Asaeda and Frohman to prove the Corollary of this theorem.

\begin{thm} The set of isotopy classes of incompressible surfaces in $(M,\alpha )$ is a terminal set for the category
$\mathcal{C}(M,\alpha )$. It is a minimal terminal set if and only if $(M,\alpha )$ is Haken-irreducible. Moreover, if $M$ is not Haken-irreducible then there does not exist \textit{any} minimal terminal set for $\mathcal{C}(M,\alpha )$. 
\end{thm}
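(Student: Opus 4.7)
The plan has three main steps, corresponding to the three assertions: terminality, the forward direction of the minimality equivalence, and the backward direction together with the ``no minimal terminal set'' statement.

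For the terminal-set assertion, I will take an arbitrary surface $S$ bounding $\alpha$ and run the standard compression algorithm: attach a 2-handle in $M\times I$ along any nontrivial compressing disk for $S$ in $M$, then attach a 3-handle to kill any 2-sphere component that bounds a 3-ball, and repeat. Each step is an elementary morphism of $\mathcal{C}(M,\alpha)$, and a lexicographic Kneser-Haken-type complexity measure (for instance $(-\chi(S), \text{number of reducible 2-sphere components})$) strictly decreases after each move, so the procedure terminates at an incompressible surface $F$; composing the elementary morphisms provides the required morphism $S\to F$ in $\mathcal{C}$.

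For the Haken-irreducible implies minimal direction, the plan is to show that no incompressible $F$ admits a morphism in $\mathcal{C}$ to any other object. The key input is the standard product-incompressibility lemma, namely that $F\times\{0\}$ is incompressible in $M\times I$ whenever $F$ is incompressible in $M$. Hence any attaching circle of a 2-handle in a compression bordism starting at $F$ already bounds a disk on $F$, so the corresponding elementary morphism is a trivial 2-handle morphism and has been deleted from $\mathcal{C}$ by Remark \ref{rem=trivial}. The remaining elementary morphisms out of $F$ are 3-handles along 2-sphere components of $F$ bounding 3-balls, a situation excluded for incompressible $F$ under Haken-irreducibility of $(M,\alpha)$. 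Therefore $F$ itself has no morphism in $\mathcal{C}$ into $\mathcal{T}\setminus\{F\}$, which means the latter fails to be terminal and $\mathcal{T}$ is minimal.

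For the converse direction and the ``no minimal terminal set'' statement in the Haken-reducible case, the plan is to use a Haken-reducibility witness $F=F'\cup S$ (with $S$ an essential 2-sphere non-parallel to any component of $F'$, and $F'\neq\emptyset$) to exhibit a removable element inside any candidate terminal set $\mathcal{T}'$. For every object $x$ with a compression sequence $x\to F$, the non-parallelism of $S$ to the components of $F'$ provides an alternative choice of compressing disks in a Cerf-type rearrangement of the Morse data of the bordism, producing an alternative sequence $x\to F''$ with $F''$ incompressible and $F''\neq F$; this lets us drop $F$ from any candidate terminal set while preserving terminality, so no $\mathcal{T}'$ can satisfy the minimality condition. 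The main obstacle is precisely this last step: because $\mathcal{C}$ admits no 1-handle moves and trivial 2-handle morphisms are deleted, one cannot simply tube $S$ into $F'$, and the Morse-data rearrangement requires a delicate argument to guarantee that the rerouted compression bordism is still a bona fide morphism of $\mathcal{C}$ with an incompressible target genuinely different from $F$; this is precisely where the non-parallelism hypothesis is used.
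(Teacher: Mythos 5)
Your terminality argument is the same as the paper's (compress and cap off trivial spheres), and that part is fine. The gap is in the minimality direction. Your key inference --- ``the attaching circle bounds a disk on $F$, hence the elementary morphism is a trivial $2$-handle morphism and has been deleted'' --- conflates a trivial neck-cutting with a trivial $2$-handle morphism. By Remark \ref{rem=trivial}, only those trivial neck-cuttings whose new $2$-sphere bounds a $3$-ball in $M$ are deleted from $\mathcal{C}(M,\alpha)$. Haken-irreducibility does not imply irreducibility (the paper's own examples $(S^2\times S^1,\emptyset)$ and $(S^2\times[0,1],\emptyset)$), so a compression along a curve that is inessential on an incompressible $F$ may split off an \emph{essential} sphere, and that morphism survives in the category. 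Excluding such arrows out of incompressible surfaces is exactly where the hypothesis must be used: the paper argues that the resulting surface would contain a nontrivial sphere together with a non-parallel remainder, contradicting Haken-irreducibility, after ruling out the two-parallel-spheres case. You instead invoke Haken-irreducibility only against $3$-handles on sphere components bounding balls, which is already excluded by the definition of incompressibility used in this section; so your argument proves minimality only when $M$ is irreducible and never uses the actual hypothesis where it is needed.

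The third part has a structural flaw as well as the admitted unresolved step. You propose to delete the Haken-reducibility witness $F=F'\cup S$ from ``any candidate terminal set,'' but an arbitrary terminal set need not contain $F$, so this cannot show that \emph{no} minimal terminal set exists; and $F$, being a disjoint union, need not admit any morphism out of it, so removing $F$ even from the set of all incompressible surfaces may destroy terminality ($F$ itself would have nowhere to map). The element the paper removes is different: the tubed surface $F\sharp S$, which carries the non-deleted morphism $F\sharp S\to F\cup S$ (the compressing curve is inessential on $F\sharp S$, but the split-off sphere is essential); this immediately gives the ``not minimal'' direction. For the stronger claim the paper first notes that a minimal terminal set must consist of incompressible surfaces, then builds the infinite chain $F\to F_1\to F_2\to\cdots$ by repeatedly splitting off pairs of parallel copies of $S$, shows any terminal set must contain some $F_i$, and removes it using the morphism $F_i\to F_j$, $j>i$. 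Your rerouting via a ``Cerf-type rearrangement'' is the step you yourself flag as the main obstacle; moreover it only yields some incompressible $F''\neq F$, not an element of the given terminal set, and it says nothing about why an arbitrary terminal set contains a removable element at all.
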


\begin{proof} First note that  
neck-cuttings ($2$-handle attachments) morphisms and capping off $2$-spheres by $3$-balls morphisms we find morphisms from each isotopy class of surface in $(M,\alpha )$ to an incompressible surface. 
Thus the set of isotopy classes of incompressible surfaces is a terminal set. 

Next suppose that $(M,\alpha )$ is Haken-irreducible. 
We claim that in this case there are no arrows in the category $\mathcal{C}(M,\alpha )$ out of incompressible surfaces. (Recall that we have eliminated those resulting from trivial neck-cuttings from our category.) In fact because of incompressibility no non-separating neck-cuttings are possible. A separating neck-cutting, which is nontrivial leads to 
a surface with two new components resulting from the cutting. 
None of these is a $2$-sphere bounding a $3$-ball in $M$. If the resulting two components are 
two parallel $2$-spheres then the original component would be a $2$-sphere bounding a $3$-ball. This would contradict that we started from an incompressible surface. Thus we get a union of two incompressible surfaces, one of which a nontrivial $2$-sphere $S$, and a second component not parallel to the $2$-sphere. But this cannot occur because we assume Haken-irreducibility. 

Conversely, if $(M,\alpha )$ is Haken-reducible then we can form the embedded connected sum $F\sharp S$ of some incompressible surface $F$ in the complement of some incompressible not parallel $2$-sphere $S$(choose orientations suitably). This will be a new incompressible surface, not isotopic to $F$. Thus there is a morphism $F\sharp S\rightarrow F\cup S$ with the resulting surface incompressible. In particular there is an arrow out of 
an incompressible surface. This shows that we can omit the incompressible surface $F\sharp S$ and still have a terminal set. 
Thus the set of incompressible surfaces is not minimal. 

In order to prove the last claim 
first note that each minimal terminal set is a subset $\mathcal{T}$ of the set of isotopy classes of incompressible surfaces.
Given $F$ and $S$ as in the definition of Haken-reducibility note that $F$ is isotopic to $F\sharp S\sharp S$ and two neck cuttings
define a morphism $F\to F\sqcup _2S=:F_1$. This process can be iterated to define an infinite sequence of incompressible surfaces 
$F\to F_1\to\ldots \to F_i\to F_{i+1}\to \ldots $. Note that the only possible morphism out of some $F_i$ is to $F_{i+1}$ or to some 
$F_i\sqcup_2S'$ for a different $2$-sphere $S'$ meeting the definition of Haken-reducibility for $F_i$ and $S'$. By compactness the set of isotopy classes of possible $2$-spheres is finite. So after reordering, we have a countable collection of isotopy classes of incompressible surfaces $F_i$ related by corresponding neck-cutting morphisms. Now we need $F_i\in \mathcal{T}$ for some $i$ for $\mathcal{T}$ to be a terminal set. On the other hand, $\mathcal{T}\setminus \{F_i\}$ is terminal because there will be some $F_j$ with $j>i$ and a morphism $F_i\to F_j$. Thus $\mathcal{T}$ is not minimal terminal.  
\end{proof}

\begin{cor} The skein module $\mathcal{W}(M,\alpha )$ is $R$-generated by isotopy classes of incompressible oriented surfaces
bounding $\alpha $ with the components colored by the elements of a basis of $V$. The module is $V^{\otimes |\alpha |}$-generated by isotopy classes of incompressible surfaces with the bounded components colored $1\in V$ and closed components colored arbitrarily. $\square$ \end{cor}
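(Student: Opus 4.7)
The plan is to combine two ingredients already in place: the preceding theorem, which identifies a terminal set in $\mathcal{C}(M,\alpha)$, and Lemma \ref{lem Colim terminal}, which presents a colimit as a quotient of the direct sum over any terminal set. By Theorem \ref{thm=main}, $\mathcal{W}(M,\alpha;V) \cong \colim \hat{F}_V$, where $\hat{F}_V$ is the Bar-Natan functor (composed with the inclusion into the cocompletion) sending a surface $S$ to $V^{\otimes \underline{S}}$. The preceding theorem shows that the set $\mathfrak{t}$ of isotopy classes of incompressible surfaces in $M$ bounding $\alpha$ is a terminal set for $\mathcal{C}(M,\alpha)$, both in the Haken-irreducible case (where it is even minimal) and in general (it is always terminal, just not always minimal).

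By Lemma \ref{lem Colim terminal}, $\colim \hat{F}_V$ is therefore generated as an $R$-module by $\bigcup_{S \in \mathfrak{t}} F_V(S) = \bigcup_{S \in \mathfrak{t}} V^{\otimes \underline{S}}$. For each incompressible $S$ choose an $R$-basis of $V^{\otimes \underline{S}}$ consisting of simple tensors $b_1 \otimes \cdots \otimes b_{|\underline{S}|}$ with each $b_i$ in a fixed basis of $V$. Under the isomorphism with $\mathcal{W}(M,\alpha;V)$ established in the proof of Theorem \ref{thm=main}, this simple tensor corresponds precisely to the isotopy class of $S$ with its $i$-th component colored by $b_i$. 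This proves the first assertion.

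For the second assertion, recall from section \ref{sec=algebra} that $V^{\otimes \underline{S}}$ is a $V^{\otimes \underline{\alpha}}$-module via the structural map $\varphi_S : \underline{\alpha} \to \underline{S}$ sending each boundary circle to the component it bounds, with the action multiplying the factor indexed by $j \in \underline{\alpha}$ into the $\varphi_S(j)$-th tensor factor. Consequently, for any component $c \in \varphi_S(\underline{\alpha})$ the $V$-factor indexed by $c$ can be generated from $1 \in V$ by the $V^{\otimes \underline{\alpha}}$-action (choose any $j \in \varphi_S^{-1}(c)$ and act by $1 \otimes \cdots \otimes v \otimes \cdots \otimes 1$ with $v$ in the $j$-th slot). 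Closed components, i.e., those in $\underline{S} \setminus \varphi_S(\underline{\alpha})$, are not in the image of $\varphi_S$ and thus cannot be reached by the action, so their colors must be left arbitrary. It follows that $V^{\otimes \underline{S}}$ is generated as a $V^{\otimes \underline{\alpha}}$-module by the simple tensors whose factors indexed by $\varphi_S(\underline{\alpha})$ equal $1$ and whose remaining factors run through $V$ freely. Translating back via Theorem \ref{thm=main} gives the stated $V^{\otimes \underline{\alpha}}$-generating set.

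No step is really an obstacle here; the only thing to verify carefully is that the $V^{\otimes \underline{\alpha}}$-module structure on $\mathcal{W}(M,\alpha;V)$ coming from multiplying Frobenius algebra elements into boundary components (the Proposition following the definition of $\mathcal{W}$) is exactly the one induced on $\colim \hat{F}_V$ from the $V^{\otimes \underline{\alpha}}$-module structures on the $F_V(S)$. This compatibility is recorded at the end of the proof of Theorem \ref{thm=main}, so the corollary reduces to the elementary algebraic observation above about generating $V^{\otimes \underline{S}}$ under the $\varphi_S$-twisted action of $V^{\otimes \underline{\alpha}}$.
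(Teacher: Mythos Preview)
Your proof is correct and is exactly the argument the paper has in mind: the corollary is marked with a $\square$ and is meant to follow immediately from the preceding theorem (incompressible surfaces form a terminal set) together with Lemma~\ref{lem Colim terminal} and the identification $\mathcal{W}(M,\alpha)\cong\colim\hat F_V$ from Theorem~\ref{thm=main}. Your treatment of the second assertion, unpacking why the $V^{\otimes\underline{\alpha}}$-action reaches precisely the bounded components via the structural map $\varphi_S$, is the right elaboration of what the paper leaves implicit.
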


\begin{exmp} \textbf{(a)} $\mathcal{W}(D^3;\alpha )$ is the cyclic free $V^{|\alpha |}$-module $V^{|\alpha |}$. This has been shown in \cite{K1}.

\noindent \textbf{(b)} If there are no closed incompressible surfaces in $M$ then the module $\mathcal{W}(M,\alpha )$ is $V^{\otimes |\alpha |}$-generated by
the set of incompressible surfaces in $M$ with components colored $1$. 
\end{exmp}

\begin{thm} Suppose that $M$ is irreducible and is not diffeomorphic to a surface bundle over $S^1$. Then $\colim(F|\mathcal{C}[\mathfrak{t}])$ is isomorphic to $\oplus_{t\in \mathfrak{t}}F(t)$, where $\mathfrak{t}$ is the minimal terminal set of incompressible surfaces in $M$ bounding $\alpha $.\end{thm}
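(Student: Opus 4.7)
The plan is to combine Lemma \ref{lem Colim terminal} with a careful analysis of the endomorphism sets in $\mathcal{C}[\mathfrak{t}]$. Since $M$ irreducible implies $(M,\alpha)$ Haken-irreducible, the preceding theorem forces $\mathfrak{t}$ to be minimal terminal in $\mathcal{C}(M,\alpha)$, so no morphism in the full subcategory $\mathcal{C}[\mathfrak{t}]$ connects two distinct objects of $\mathfrak{t}$. Applying Lemma \ref{lem Colim terminal} to $\mathfrak{t}$ regarded as a terminal set in itself, the colimit becomes
\[
\bigoplus_{t\in\mathfrak{t}}F(t)\ \Big/\ \bigl\langle\, v-F(a)v \,\bigm|\, t\in\mathfrak{t},\ a\in\textrm{Hom}_{\mathcal{C}[\mathfrak{t}]}(t,t),\ v\in F(t)\,\bigr\rangle,
\]
and it remains to prove $F(a)=\textrm{Id}$ for every such endomorphism.

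Next I would identify the endomorphisms geometrically. Trivial $2$-handle morphisms have been removed from $\mathcal{C}$ by Remark \ref{rem=trivial}. For $S\in\mathfrak{t}$ incompressible, any $2$-handle morphism out of $S$ must be along a curve bounding a disk on $S$ (by incompressibility), and the resulting $2$-sphere bounds a $3$-ball in the irreducible $M$, so it is already a trivial $2$-handle morphism --- now absent. Any $3$-handle morphism strictly decreases $|\underline{S}|$ and so cannot be a self-loop. Cerf-theoretic cancellation of $2/3$-handle pairs then lets me represent every morphism $S\to S$ in $\mathcal{C}[\mathfrak{t}]$ by a pure ambient self-isotopy of $M$ rel $\partial M$ preserving $S$. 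Such a morphism is sent by $F$ to the permutation isomorphism of $V^{\otimes\underline{S}}$ induced by the resulting permutation $\sigma$ of $\underline{S}$. The theorem therefore reduces to the geometric assertion that $\sigma=\textrm{Id}$ for every ambient self-isotopy preserving $S$.

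The heart of the proof, and the main obstacle, is this geometric assertion. Components of $S$ with nonempty boundary are fixed by any isotopy of $M$ rel $\partial M$, since they are determined by their boundaries in $\alpha$; thus $\sigma$ acts only on closed components. Suppose for contradiction that $\sigma$ has a cyclic orbit $S_{i_1}\to S_{i_2}\to\cdots\to S_{i_k}\to S_{i_1}$ of length $k\ge 2$ on closed components. Each pair $(S_{i_j},S_{i_{j+1}})$ consists of ambient-isotopic closed incompressible surfaces in the irreducible manifold $M$, so by the classical parallelism theorem (Waldhausen) they cobound a product region $\Sigma\times I$. Simultaneous parallelism lets me isotope the whole orbit inside a single product neighborhood to $k$ level surfaces $\Sigma\times\{t_1<\cdots<t_k\}$. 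The complement $M\setminus\bigcup_j S_{i_j}$ then splits into $k-1$ product regions between consecutive levels plus one remaining region $Q$ with $\partial Q=S_{i_1}\cup S_{i_k}$; the cyclic ambient isotopy permutes these components, sending the product between $S_{i_{k-1}}$ and $S_{i_k}$ onto $Q$. Hence $Q\cong\Sigma\times I$, and gluing the $k$ product regions cyclically presents $M$ as the mapping torus of a self-diffeomorphism of $\Sigma$ --- a surface bundle over $S^1$, contradicting the hypothesis. Therefore $\sigma=\textrm{Id}$, the relations above are all trivial, and $\colim(F|\mathcal{C}[\mathfrak{t}])\cong\bigoplus_{t\in\mathfrak{t}}F(t)$.
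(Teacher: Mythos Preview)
Your proof is correct and takes a genuinely different route from the paper. The paper's argument is a two-line citation: since $M$ is irreducible the terminal set is minimal, and then Hatcher's theorem on spaces of incompressible surfaces \cite{H} is invoked to conclude that $\mathcal{C}[\mathfrak{t}]$ has no nontrivial loop isotopies at all, so the category is discrete and the colimit is the direct sum. You instead prove only what is needed for the functor, namely that every self-isotopy of an incompressible $S$ induces the \emph{trivial permutation} on $\underline{S}$; since $F$ sends an isotopy morphism to the corresponding permutation isomorphism, this already gives $F(a)=\textrm{Id}$ and hence the same conclusion. Your argument is more self-contained and avoids the black-box citation, at the price of establishing a weaker intermediate statement (permutation triviality rather than discreteness of $\mathcal{C}[\mathfrak{t}]$).

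Two places in your final paragraph deserve a bit more care, though the ideas are sound. First, the step ``simultaneous parallelism lets me isotope the whole orbit inside a single product neighborhood'' needs the standard inductive argument: once $S_{i_1},S_{i_2}$ cobound $\Sigma\times I$, any further $S_{i_j}$ lying inside is incompressible in $\Sigma\times I$ and hence a level surface, while one lying outside can be handled by replacing $S_{i_1}$ or $S_{i_2}$ and iterating. Second, the claim that the ambient diffeomorphism $\phi_1$ carries a product piece onto the remaining region $Q$ is best seen by noting that the complementary pieces and the $S_{i_j}$ form a cycle graph on which $\phi_1$ acts as a rotation of the edges, hence also of the vertices; this forces $Q$ to be diffeomorphic to one of the $\Sigma\times I$ pieces, and the cyclic gluing exhibits $M$ as a $\Sigma$-bundle over $S^1$. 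With those details filled in your argument is complete.
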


\begin{proof} Because of irreducibility the set of incompressible surfaces is minimal. By a result of Hatcher \cite{H} the category
$\mathcal{C}[\mathfrak{t}]$ has no loop isotopies if $M$ is not a surface bundle over $S^1$. Thus the category $\mathcal{C}[\mathfrak{t}]$ 
is discrete and the result follows. \end{proof}

We return to the general set-up.
If $(M,\alpha )$ is Haken-irreducible we are in the situation of \ref{cor=exact}. The functors $F'|\mathcal{C}(t,t')$ for $t,t'\in \mathfrak{t}$ then have the following 
interpretation: The objects of $\mathcal{C}(t,t')$ are isotopy classes of $3$-manifolds $W\subset M\times [-1,1]$, properly embedded and bounding 
incompressible surfaces representing $t,t'$ in $M\times \{\pm 1\}$, product along $\alpha $, such that the restriction of the 
projection to $M\times [-1,1]\rightarrow [-1,1]$ is a generic Morse function with all critical points of index $0,1$ mapping to $t<0$ and those of index $2,3$ mapping to $t>0$. Isotopies between these manifolds will keep this separation of indices. 
This is called a \textit{Heegaard embedding} up to \textit{Heegaard isotopy}. Note that for irreducible $M$ the incompressible surfaces have no
$2$-sphere components and $W$ are compression bodies. But since in the more general case $2$-sphere components of $t,t'$ are still incompressible 
this seems to be the natural version of the idea of a compression body in the case of embeddings.  
The morphisms $W\rightarrow W'$ are oriented $3$-manifold representing elements in $\mathcal{C}(M,\alpha )$, i.\ e.\ with only $2$- and $3$-handle attachments, from $W\cap (M\times \{0\})$ to $W'\cap (M\times \{0\})$ such that the compositions with $W'\cap (M\times [-1,0])$ respectively $W'\cap (M\times [0,1]$ are isotopic in the strong sense to $W\cap (M\times [-1,0])$ respectively $W\cap (M\times [0,1])$. Essentially this allows to cancelling $1$- and $2$-handles from 
$W$ if possible. A terminal set in $\mathcal{C}(t,t')$ is thus a set of isotopy classes of \textit{minimal} Heegaard embeddings
in $M\times I$. 
  
Let $\alpha $ be a closed $1$-manifold. Recall from section \ref{sec=compression} that $\mathcal{C}(\alpha )$ denotes the category with objects abstract (or if the reader wants embedded in $\mathbb{R}^{\infty}$) orientable surfaces with boundary $\alpha $. The morphisms from $S$ to $S'$ are orientable $3$-manifolds bounding $S$ and $S'$. This is the \textit{abstract} version of the corresponding category $\mathcal{C}(M,\alpha )$ without any embedding in a $3$-manifold. The functor in $\mathcal{V}(\underline{\alpha })$ is defined as in the embedded case. Recall the corresponding Bar-Natan modules 
$\mathcal{W}(\alpha )\cong V^{\otimes \underline{\alpha }}$, which only depend on $\underline{\alpha }$, see \cite{K1}. Note that each category has a unique terminal 
$1$-element set given by the isotopy class of the disjoint union of disks $D$ bounding $\alpha $. It is not completely obvious that the colimit of the Bar-Natan functor is isomorphic to $V^{\otimes \underline{\alpha }}=F(D)$ in this case. But the relations defined from $W_{DD}$ are all just the identity. This follows 
from the fact that $W_{DD}$ on both sides capped off by balls is a closed $3$-manifold, and the analysis above then shows that the relations 
are trivial.     

Recall from section \ref{sec=compression} the functor of categories $\mathcal{C}(M,\alpha )\rightarrow \mathcal{C}(\alpha )$ defined by mapping both the objects and morphisms to the 
corresponding abstract surfaces respectively $3$-manifolds, forgetting the embeddings in $M$ respectively $M\times I$. The important point of course is that in general many different surfaces in $M$ up to isotopy become diffeomorphic abstractly. The induced natural homomorphism of the colimit modules is a $V^{\otimes \underline{\alpha }}$-homomorphism
$$\mathfrak{f}: \mathcal{W}(M,\alpha )\rightarrow V^{\otimes \underline{\alpha }}.$$
This is \textit{not onto} in general and nontrivial if and only if $\alpha $ bounds in $M$. Otherwise $\mathcal{W}(M,\alpha )$ is the trivial $V^{\otimes \underline{\alpha }}$-module $0$. On the other hand if $\alpha =\emptyset $ then $\mathfrak{f}$ is onto $R$ because we can embed arbitrary closed surfaces in a $3$-ball in $M$. 

\begin{exmp} Consider $M=S^1\times D^2$ with two oppositely oriented longitudes in the boundary. Then the Bar-Natan module is the $V^{\otimes 2}$-module $V$ generated by the obvious annulus bounding $\alpha $ and colored by $1$. The image in the abstract skein module $V^{\otimes  2}$ is the image of $\Delta $. This is the  cyclic $V^{\otimes 2}$-submodule of $V^{\otimes 2}$ isomorphic to $V$. This shows that the image usually is \textit{not} a split submodule. Note that $V$ is the cyclic $V^{\otimes 2}$-module corresponding to the exact sequence
$$\ker(m)\hookrightarrow V\otimes V\twoheadrightarrow V.$$
For example if $V=R[x]/(x^2)$ and $R$ is an integral domain then $\ker(m)$ is precisely the $V^{\otimes 2}$-ideal of $V^{\otimes 2}$ generated by 
$1\otimes x-x\otimes 1$.  
\end{exmp}

\vskip .1in

For each $V^{\otimes \underline{\alpha }}$-submodule $H$ of $V^{\otimes \underline{\alpha }}$ consider the preimage 
$\mathfrak{f}^{-1}(H)\subset \mathcal{W}(M,\alpha )$. This is a kind of $H$-adic version of Bar-Natan modules
$\mathcal{W}(M,\alpha )$. 

It seems tempting at this point to consider a Heegaard splitting of a closed connected $3$-manifold and the two linear morphisms $V\rightarrow R$ 
assigned by the two handlebodies. It follows from the considerations before that both morphisms $V\rightarrow R$ are just given by 
$\varepsilon \circ \mathfrak{k}^g$, where $g$ is the genus of the handlebody. We see that the functor is \textit{not} a functor on $3$-manifolds but 
on Heegaard splittings, which probably is expected.

\section{Tunneling graphs of $(M,\alpha )$}\label{sec=tunneling}

\vskip .1in

Let $\mathfrak{t}=\mathfrak{t}(M,\alpha )$ denote the set of isotopy classes of incompressible surfaces in $M$ bounding $\alpha $. 

If $(M,\alpha )$ is Haken-irreducible then
the category $\mathcal{C}(\mathfrak{t})$ as defined in section \ref{sec=presentations} determines the relations of Bar Natan modules. But it is a geometrically difficult category,
defined by isotopy of $3$-manifolds in $M\times I$. On the other hand we will see that the combinatorics defining the relations is simple, depending on the isotopy classes only through certain \textit{tunneling numbers}. 
The necessary geometric information will be collected in the tunneling graph of $(M,\alpha )$ defined below. 

An \textit{$r$-partition} of a finite set $\{1,2,\ldots ,n\}$ is an \textit{ordered} collection of disjoint subsets 
$\mathsf{T}_1,\ldots ,\mathsf{T}_r\subset \{1,2,\ldots ,n\}$ such that 
$\mathsf{T}_1\cup \ldots \cup \mathsf{T}_r=\{1,2,\ldots ,n\}$. Note that $\mathsf{T}_i=\emptyset$ for some $i$ is possible.

\begin{defn} Let $S,S'$ be two incompressible surfaces in $M$ bounding $\alpha $, representing two isotopy classes and thus objects of the category $\mathcal{C}(M,\alpha )$.  An \textit{$r$-tunneling invariant from $S$ to $S'$} consists of $r$-partitions $(\mathsf{S}_i)_{1\leq i\leq r}$ and $(\mathsf{S}'_i)_{1\leq i\leq r}$ of the set of components of $S$ and $S'$ (where in each case the components are identified in some way with  a set $\{1,\ldots ,n\}$ for some $n$) such that for each $i$ not both of $\mathsf{S}_i$ and $\mathsf{S}'_i$ are empty, together with an $r$-tuple of nonnegative integer numbers $(\tau_1,\ldots ,\tau_r)$, which is called the \textit{tunneling vector}. We call $(\mathsf{S}_i)$ the \textit{input} and $(\mathsf{S}'_i)$ the \textit{output} partition of the corresponding tunneling invariant. To simplify notation we also let $\mathsf{S}_i$ respectively $\mathsf{S}'_i$  denote the union of components of the surfaces $S$ respectively $S'$ determined by the partition. \end{defn}

Partitions do not involve specific orderings, we have just chosen orderings and partition sets of the components for the definition. 
For example if $S$ has components $S_1,S_2,S_3$, and $S'$ has components $S_1',S_2'$ a typical pair of $3$-partitions could be
$\mathsf{S}_1=\{S_1,S_2\}$, $\mathsf{S}_2=\emptyset$, $\mathsf{S}_3=\{S_3\}$, and $\mathsf{S}'_1=\emptyset$, $\mathsf{S}'_2=\{S'_1,S'_2\}$,
$\mathsf{S}'_3=\{S'_3\}$. The orderings are \textit{not} part of the structure. It is just recorded that $S_1,S_2$ form a \textit{part},
$S'_1,S'_2$ is a second part and $S_3,S'_3$ is the third one.  

\vskip 0.05in

Each morphism $W: S\rightarrow S'$, considered as an object in the category $\mathcal{C}(S,S')$, determines a unique $r$-tunneling invariant, with $r$ the number of components with nonempty boundary. 
We omit closed components of $W$. Thus $W$ is the disjoint union of components $W_i$, $1\leq i\leq r$. 
Let $\mathsf{S}_i$ be the union of components of the intersection of $W_i$ with $M\times \{0\}$ respectively $\mathsf{S}'_i$ be the union of components of the intersection with $M\times \{1\}$. Thus \textit{not both} $\mathsf{S}_i$ and $\mathsf{S}_i'$ can be empty because otherwise $W_i$ is closed. It is known \cite{KL} that we can cancel
handles by Heegaard isotopy such that $W_i$ has at most one $3$-handle respectively one $0$-handle, which occurs if $\mathsf{S}_i=\emptyset$ respectively 
$\mathsf{S}_i'=\emptyset$. Fix some $i$ and let $\mathsf{S}_i=S_1\sqcup \ldots S_k$ and 
$\mathsf{S}_i'=S_1'\sqcup \ldots \sqcup S_{\ell }'$ so that for the genera
$$g(\mathsf{S_i})=\sum_{j=1}^kg(S_i), g(\mathsf{S}_i'=\sum_{j=1}^{\ell}S_j'.$$

Add collars to all components of $\mathsf{S}_i$ and $\mathsf{S}_i'$. First consider $W$ as an abstract bordism from 
$\mathsf{S}_i$ to $\mathsf{S}_i'$, i.\ e.\ consider the image of the bordsim is $\mathcal{C}(\alpha )$. 
Since $W_i$ is connected there will be $1$-handles attached to the $S_j$ respectively $S_j'$ (corresponding to $2$-handles on $\mathsf{S_i}$) to connect the surfaces. Those are \textit{connecting} $1$-handles and the numbers are determined by the 
numbers $k,\ell $ of components. Of course in general there can be connecting $1$-handles canceled by $2$-handles again but we can ignore those. They will not change the algebra after applying the Bar-Natan functor. 
We will assume that we first attach the connecting $1$-handles $\mathsf{S}_i$ and $\mathsf{S}_i'$ such that we have 
connected surfaces. Next we consider the difference in the genus of the surfaces. If $g(\mathsf{S}_i')\geq g(\mathsf{S}_i)$ then 
in the abstract model non-connecting handles have to be added on $\mathsf{S}_i'$ (after connecting) for an abstract bordism, in fact 
exactly $g(\mathsf{S}_i')-g(\mathsf{S}_i)$ will be necessary. Note that the notion of connecting and non-connecting in a way is only defined using that specific order of handle attachment, otherwise things are interchangeably. What we really consider is the excess necessary $1$-handles to 
match the genera beyond connecting the components of $\mathsf{S}_i$ respectively $\mathsf{S}_i'$. 
Now an \textit{embedded} bordism will need additional non-connecting $1$-handles on $\mathsf{S}_i$. 
Now if $r_1^i$ is the total number of non-connecting $1$-handles of $W_i$ we define
$$\tau_i:=r_1^i-(g(\mathsf{S}_i')-g(\mathsf{S}_i)).$$
We think of this as the excess necessary for the embedding of $W_i$.
Now if $g(\mathsf{S}_i'<g(\mathsf{S}_i)$ then we define 
$$\tau_i=r_2^i-(g(\mathsf{S}_i')-g(\mathsf{S}_i)),$$
where $r_2^i$ is the total number of \textit{non-connecting} $2$-handles of $W_i$ corresponding to non-connecting $1$-handles of the turned upside down bordism.

We will next develop the combinatorics how to calculate the pair of linear morphisms $F_{\mathfrak{t}}(W)=:(F_-,F_+)$ for an arbitrary fixed morphism $W$ as
above.  Note that possible closed components of $W$ do not contribute. In fact we will have $F_+: V^{\otimes r}\rightarrow V^{\otimes \underline{S}}$
and $F_-: V^{\otimes r}\rightarrow V^{\otimes \underline{S'}}$. Note that closed components do not contribute because  correspond to some 
morphism $R\rightarrow R$, and by Poincare duality for each component both $F_+$ and $F_-$ would be multiplications by the same powers of $\mathfrak{k}$. 
This in fact has to show up in the actual calculation of the functor $\mathcal{F}_{\mathfrak{t}}$. But of course we can just omit the closed component from 
$W$ to get another morphism, where we do multiply both $F_+$ and $F_-$ by the same powers of $\mathfrak{k}$. In other words, in calculating 
the relations it suffices to consider only morphisms in $\mathcal{C}(S,S')$ without any closed components.(Note that this means that in special case $S=S'=\emptyset$, which could occur if $\alpha =\emptyset$, our argument shows that the are no relations coming from $\mathcal{C}(\emptyset ,\emptyset)$.)

In the following we will always drop all closed components from $W$. Note that this might even change the homology class of $W$. But since we are 
only interested in describing all relations for $\mathcal{W}(M,\alpha )$ we do not drop necessary information by leaving out those components, even though 
they show up in the calculation of the functor $\mathcal{F}_{\mathfrak{t}}$.
A tunneling invariant from $S$ to $S'$ is called \textit{reducible} if there is a sequence $S=S_0\mapsto S_1\mapsto S_2\mapsto \ldots \mapsto S_r=S'$
and tunneling invariants for $S_i\to S_{i+1}$ such that the relation defined by the given tunneling invariant follows algebraically from the relations defined from the sequence by iterated pullback.   

Note that the sum of the homology classes of the surfaces in $\mathsf{S}_i$ is equal to the sum of the homology classes in $\mathsf{S}_i'$ for each $i=1,2,\ldots ,r$, up to choices of orientations of the components. If for a given pair of $r$-partitions a tunneling vector $(k_1,k_2, \ldots ,k_r)$ is realized in a tunneling invariant then each vector 
$(k_1+n_1,k_2+n_2,\ldots ,k_r+n_r)$ is realized. This can be seen by adding cancelling $1$-$2$ pairs. We call a pair of partitions admitting a 
tunneling vector in a tunneling  invariant an \textit{admissible} pair of partitions. Obviously the tunneling vectors of tunneling invariants can be chosen 
minimal because the relations from stabilized tunneling vectors are implied by those from minimal ones. Tunneling invariants of this form and the corresponding vectors will be called \textit{minimal}. 

Let us set up some further algebraic notations. Let $d_n: V\rightarrow V^{\otimes n}$ be defined by $d_0=\varepsilon $, $d_1=\textrm{Id}$, $d_2=\Delta $ and 
$d_n=(\textrm{Id}\otimes d_2)\otimes d_{n-1}$. Our algebra of morphisms is generated by the $d_n$ and $\mathfrak{k}:V\rightarrow V$. 
Note that the family of $\{d_n\}$ satisfies a nice cooperad structure. For example 
$(d_{n_1}\otimes d_{n_2}\otimes \ldots \otimes d_{n_k})\circ d_k=d_{n_1+\ldots +n_k}$. This follows from coassociativity.
Also there is nice compatbility with permutations, for example $\sigma d_n=d_n$ if $\sigma $ is the permutation of $V^{\otimes n}$ induced from the 
permutation of $n$ elements.
For $n\geq 1$ also define $\varepsilon^n: V^{\otimes n}\rightarrow V$ inductively by $\varepsilon^1=\textrm{Id}$ and 
$\varepsilon^n=(\varepsilon \otimes \textrm{Id})\circ \varepsilon^{(n-1)}$.
Then $\varepsilon^n\circ d_n=\textrm{Id}$ showing explicitly a left inverse of $d_n$.  

Suppose that $V$ is manifold induced. Assume that $n_1\neq n_1'$ but $n_1+n_2=n_2+n_2'=:n$. Suppose that there exist $v_i,v_i'$ for $i=1,2$ such that
$(d_{n_1}\otimes d_{n_2})(v_1\otimes v_2)=(d_{n_1'}\otimes d_{n_2'})(v_1'\otimes v_2')$. 
Then $v_1\otimes v_2=v_1'\otimes v_2'=\Delta (v)$ for some $v\in V$. This follows from the geometric fact that 
with $\Delta_n\subset X^n$ denoting the diagonal we have $(\Delta_{n_1}\times \Delta_{n_2})\cap (\Delta_{n_1'}\times \Delta_{n_2'})=\Delta_n$
if $n_1\neq n_1'$. 

For further explicit calculations it is helpful to introduce linear morphisms $d_I: V \rightarrow V^{\otimes n}$, for subsets $I=\{i_1,\ldots ,i_{|I|}\}\subset \{1,\ldots ,n\}$ by having $d_n$ by placing the result of $d_n$ into outputs corresponding to $I$ in the given ordering of factors, similarly: $\varepsilon_k: V^{\otimes n}\rightarrow V^{\otimes (n-1)}$ by having $\varepsilon $ act on the $k$-th factor of $V^{\otimes n}$ and identifying $R\otimes V$ respectively $V\otimes R$ with $V$, so the output doesn't matter.
Similar notation applies to $\mathfrak{k}$. If $V$ is a Frobenius algebra similar notations can be introduced for multiplications. Note that because of 
associativity there is defined inductively $m_n: V^{\otimes n}\rightarrow V$ with $m_1=\textrm{Id}$ and $m_0=\mu :R\rightarrow V$. Obviously $m_n\circ d_n=(\mathfrak{k})^{n-1}$. The algebra of the $d_n,m_n$ of course corresponds to the algebra of diffeomorphism classes of surfaces. What we observe is that the algebra of our kind of \textit{half-bordism} category of oriented $3$-manifolds is similar.   

It suffices to consider $W$ connected such that $F_+: V\rightarrow V^{\otimes \ell}$ and $F_-: V\rightarrow V^{\otimes k}$, where $\ell $ and $k$ as above are the number of elements of the boundary of $W$ in $M\times \{1\}$ respectively $M\times \{-1\}$. 
So consider some $1$-tunneling invariant for the pair $(S,S')$, so $\ell $ and $k$ are given. 
Also given $S,S"$ we can distinguish the cases $g(S')>g(S)$ and $g(S')<g(S)$. 
Here $g(S)$ is the total genus of $S$, which is the sum of the components, and for a bounded surface the genus is the genus of the closed surface 
we get by filling all holes by disks. In the first case we have 
Then $F_-=d_k\circ \mathfrak{k}^{g(S')-g(S)+\tau }$ and $F_+=d_{\ell }\circ \mathfrak{k}^{\tau }$. In the second case we have
$F_-=d_k\circ \mathfrak{k}^{\tau }$ and $F_+=d_{\ell }\circ \mathfrak{k}^{g(S)-g(S')+\tau }$. The linear maps $F_{\pm}$ for more general tunneling invariants 
are just pasted together from the contributions for each pair of partition sets $(\mathsf{S}_i,\mathsf{S}_i')$ and the genera of the surfaces in the corresponding 
partition set. Of course the general answer can be difficult to write down using the notation above. 

\begin{exmp} \textbf{(a)} Suppose that $|S|=|S'|=1$ and $g(S')>g(S)$. Then there is only one possible $1$-partition: $\mathsf{S}_1=\{S\}, \mathsf{S}_1'=\{S'\}$, and one possible $2$-partition (in the case $\alpha =\emptyset $ and $S,S'$ in the trivial homology class in $H_2(M)$):
$\mathsf{S}_1=\{S\},\mathsf{S}_2=\{\emptyset \},\mathsf{S}_1'=\{\emptyset\}$ and $\mathsf{S}_2=\{S'\}$. 
In the $1$-partition case we have $F_{\pm }: V\rightarrow V$ are given by $F_+=\mathfrak{k}^{\tau }$ and $F_-=\mathfrak{k}^{g(S')-g(S)+\tau }$,
where $\tau $ is part of a tunnel invariant of $(S,S')$. In the $2$-partition case we have $F_{\pm }: V\rightarrow V$. Here the tunneling invariant contains the 
pair $\tau_1,\tau_2$. Then $F_- =\mathfrak{k}^{\tau_1}$ and $F_+=\mathfrak{k}^{\tau_2}$. 
It is important to observe that by adding cancelling $1$-$2$ handle pairs one can always realize tunneling invariants with larger values of $\tau_i$.

\noindent \textbf{(b)} Let $|S|=2$ and $|S'|=0$. In this case there are $1$- and $2$-tunneling invariants possible. The second case only occurs if 
each component of $S$ is separately null-homologous. 

\noindent \textbf{(c)} Consider $(D^3,\alpha )$ where $D^3$ is the compact $3$-ball. Then the only incompressible surfaces are disks bounding the 
components of $\alpha $. Recall that $W$ is parallel along $\alpha $. This puts strong restrictions on possible 
partitions. In fact if a component of $W$ contains bounds a disk on one side it also has to bound the same disk on the other side because it will contain 
$\alpha \times I$. Of course it is possible to construct $3$-dimensional bordisms by taking connected sums 
of components of the trivial bordism $D^{|\alpha |}\times I\subset M\times I$ (corresponding to cancelling $1$-$2$-handle pairs that can be added with the $1$-handle connecting and the $2$-handle disconnecting). But these are the only bordisms possible. 
For the resulting homomorphisms we get in any case $F_+=F_-$, so no relations. 
Thus $\mathsf{S}_i=\mathsf{S}_i'$ for all $i$. Thus there will be no relations from this. 
This gives an alternative proof that $\mathcal{W}(D^3, \alpha)\cong \mathcal{W}(\alpha )\cong V^{\otimes \underline{\alpha }}$. 
Note that the condition that $W$ is a product on $\alpha $ \textit{always} restricts possible partitions in this way. If $\mathsf{S}_i$ contains 
a component $S_1$ of the surface $S$ bounding a curve $\alpha_1$ in $\alpha $ then $\mathsf{T}_i$ will contain a surface component $S_1'$ 
of $S'$ bounding $\alpha_1$.  
\end{exmp}
 
In the \textit{abstract} or not embedded setting (or equivalently $M=S^3$), all tunneling invariants can be realized with tunneling vector the $0$-vector, for each given pair of $r$-partitions and
$r\leq |S|+|S'|$. In the embedded case there are many restrictions: (i) homological restrictions on possible pairs of partitions, (ii) the tunneling vectors 
are not necessarily zero because the $3$-manifolds have to be embedded in $M\times I$, for each partition entry, and (iii) restrictions because 
of embeddability of components resulting from the partitions. The tunneling vectors are controlled by (ii) and (iii). So it might be necessary to
introduce additional $1$-$2$ handle pairs both to embed components and to embed the components disjointly.
But even if all these conditions are satisfied it is not obvious that every partition can be realized. It follows from the fact that the bordism and 
integral homology groups are isomorphic that \textit{some} partition can always be realized. So assume that there is given a partition satisfying all
the homological conditions on the partition parts. This means that there exists for each partition part, let's say the $i$-th, an embedded 
manifold $W_i\subset M\times I$ bounding the partition, for some tunneling number associated to this part (and all higher tunneling numbers). 
But it is not at all clear when we can choose the tunneling vector such that the components can be embedded disjointly. 
But nevertheless it is the lack of always tunneling vectors $0$ for all partitions, which finally gives the Bar-Natan modules its structure. 

\begin{exmp} Consider $\alpha =\emptyset $ and $M=F\times I$. Consider $S_i=F\times \{i/4\}$ for $i=1,2,3$ and $S$ the disjoint 
union. Choose orientations of $S_i$ such that $S_2$ and $S_3$ are oriented parallel while $S_1$ has the opposite orientation. 
Let $S'=F\times \{1/2\}$. Then the partition $\mathsf{S_1}=\{S_1,S_3\}$, $\mathsf{S}_2=\{S_2\}$, $\mathsf{S}_1'=\{\emptyset \}$ and 
$\mathsf{S}_2'=\{S_2\}$ is abstractly possible, and all homological considerations suggest tunneling is possible. But it is not possible to embed the constant 
bordism on $S_2$ and the nullbordism of $S_1\cup S_3$ into $(F\times I)\times I$. Essentially it does not suffice for $S_1\cup S_3$ to be 
nullhomologous in $M$, it has to be nullhomologous in $M\setminus S_2$. Of course this restriction is too strong in general. 
The general problem here is the following. Given two orientable embedded manifolds $W_1,W_2\subset M\times I$, which are disjoint in the boundary.
When can we change $W_1,W_2$ to $W_1',W_2'$ such that $W_1'\cap W_2'=\emptyset$.  
If we ask this question up to up to relative \textit{oriented} bordism of $W_1,W_2$ to $W_1',W_2'$ then  
this is a general problem that has been considered in \cite{K2}.  We ask for something weaker because the homology classes of the 
$W_i$ can be changed. But for each fixed choice there is defined an intersection invariant in $H_2(M)$. 
If we let $W_1,W_2$ vary through all possible homology classes and consider the resulting set of obstructions in $H_2(M)$ 
we have defined a possible obstruction. 
It seems interesting to study the ambient $4$-dimensional surgery problems arising from this in detail. 
In fact detecting when disjointness of the bordisms in $M\times I$ is accomplished, corresponding to the partition parts, remains the most 
important and difficult aspect of Bar-Natan theory.
\end{exmp}

In the following we assume that $M$ is Haken-irreducible. 
Let $P\subset \mathfrak{t}$ be a well-ordered subset. Then the relations for $\colim(F|\mathcal{C}(M,\alpha )_P)$ according to Corollary \ref{cor=exact}  can all be derived from a labeled directed multi-graph $\Gamma_P$ constructed as follows:   
The vertices of $\Gamma_P$ are in one-to-one correspondence with $P$, each vertex labeled with the sequence of genera of components 
(more precisely a mapping from the set of orderings of the components into sequences of nonnegative numbers, equivariant with respect to permutations).
Next add a directed edge joining a pair of vertices representing incompressible surfaces corresponding to each tunneling invariant between the two surfaces,
directed towards the surface smaller in the total ordering. 
(Thus in general the tunneling graph actually is a multi-graph). 
Call an edge reducible if the corresponding tunneling invariant is reducible.
Suppose that an edge corresponding to $S\to S'$ is reducible by a sequence $S=S_0\to \ldots \to S_r=S'$.
Then none of the edges $S_i\to S_{i+1}$ for $i=0,\ldots r-1$ will reducible by a sequence through the edge $S\to S'$. This follows just from   
the fact that each arrow is descending with respect to the well-ordering.    

Because deleting a reducible edge will not change reducibility or non-reducibility of the remaining 
edges we can define the tunneling graph by omitting all reducible edges from the graph. 
    
\begin{defn} Let $P\subset \mathfrak{t}$ be well-ordered and $M$ Haken-irreducible. Then
the \textit{tunneling graph} $\Gamma :=\Gamma _P(M,\alpha )$ is the labeled directed graph as defined above.
Note that the graph depends on the choice of well-ordering.\end{defn}

Since homology implies bordism and by the results of \cite{KL} it follows that the set of components of the tunneling graph for $(M,\alpha )$ is determined by a homological equivalence as follows. Suppose that the components of two orientable surfaces $S,S'$ bounding $\alpha $ can be oriented in such a way that the resulting homology classes in $H_2(M,\alpha )$ are equal then $S$ and $S'$ are in the same component of the tunneling graph. The argument here is essentially the same as the one for tube equvalency of Seifert surfaces, see also \cite{K2} for a discussion of Seifert surfaces in $3$-manifolds. By abuse of notation we call incompressible surfaces respectively vertices of $\Gamma_P$ \textit{homologous} if they are in the same component of $\Gamma_P$ and the set of all elements in $P$ homologous to each other a homology class in $P$.

Note that the labels of some edge and its adjacent vertices completely determines the pair of morphisms $F_{\pm}: F(C)\rightarrow F(S_{\pm})$.
Here the tunneling invariant corresponding to the edge is induced from some morphism $W: S_+\rightarrow S_-$ and thus 
$S_+\longleftarrow C\longrightarrow S_-$ where $W\subset M\times [-1,1]$, $S_{\pm}=W\cap (M\times \{\mp 1\}$ and $C=M\times \{0\}$.
Let $\Gamma (M,\alpha ):=\Gamma _{\mathfrak{t}}(M,\alpha )$. Thus, at least if $M$ is Haken-irreducible, all the geometric input necessary to calculate the Bar-Natan module is contained in the tunneling graph. Given the graph  what remains is to work out the \textit{general} combinatorics how this information translates into the structure of the module.  We already know that for $(M,\alpha )$ Haken-irreducible the corresponding module has a presentation as a quotient of $\oplus_{S\in P}V^{\otimes \underline{S}}$ by 
the submodule defined from the image $\oplus_eV^{\otimes r(e)}\rightarrow V^{\otimes \underline{e_+}}\oplus V^{\otimes \underline{e_-}}\subset \oplus_{S\in P}V^{\otimes \underline{S}}$, where $e_{\pm}$ are denote the two isotopy classes of incompressible surfaces corresponding to the 
vertices $e_{\pm }$ adjacent to $e$. The sum runs through all edges, the morphisms are defined as $F_{\pm}$ defined from the combinatorics of the tunneling 
labels of vertices and edges as described above, where $e$ is the edge of some $r(e)$-tunneling invariant.

\vskip .1in

\noindent \textbf{Question.} Which directed labeled graphs in the above way are the tunneling graph of some $(M,\alpha )$?

\vskip .1in

The mapping class group $\textrm{Diff}(M,\partial M)$ of a $3$-manifold $M$ relative to the boundary acts on all tunneling graphs $\Gamma (M,\alpha )$ for all $\alpha $. Much is known 
about $\textrm{Diff}(M,\partial M)$, \cite{H}. In particular for simple Seifert fibred manifolds the structure is well-understood, see \cite{AF}.

\section{From tunneling graphs to Bar-Natan modules}\label{sec=to Bar-Natan}

Let us start with a discussion of the $R$-module structure in some few cases.

Consider the \textit{Khovanov Frobenius algebra} $V=V_K:=R[x]/(x^2)$ with $\mathfrak{k}$ defined by multiplication by $2x$. 
As $R$-module we have $V\cong R\oplus R\cdot x$.
Thus $\mathfrak{k}^2=0$ and $\ker(\mathfrak{k}^g)=V$ for $g\geq 2$. If $R$ has no $2$-torsion then $\ker(\mathfrak{k})=R\cdot x$.
Components colored with $1$ are called \textit{white} components following \cite{AF}.
A surface with only white components is called a white surface.  
Using this language and identifying generators of $V$ with colored surfaces we can say: $d_n(1)$ is a linear combination of surfaces, where 
$n-1$ dots are distributed in all possible ways, $d_n(x)$ is the single surface with all components dotted. 
$d_n(\mathfrak{k}(1))$ is twice $d_n(x)$ and $d_n(\mathfrak{k}(x))=0$. If any higher powers of $\mathfrak{k}$ appear the terms are zero.
So we have the following result:
If a tunneling vector contains any component $\tau_i>1$ then \textit{both} $F_{\pm}$ are zero. So if for a pair $(S,S')$ all
admissible tunneling vectors have a component $>1$ the two incompressible surfaces are linearly independent.
 
It is now easy to give \textit{global} arguments for some of Asaeda and Frohman's techniques in the case of the Khovanov Frobenius algebra.
We see that the collection of 
white incompressible surfaces is linearly independent if $M$ is Haken-incompressible. In fact if $M$ is Haken-incompressible no
homorphism $F_{\pm}$ for given Heegaard bordisms between $S$ and $S'$ can have a white surface in the image, except it is coming 
from some isotopy loop, in which case we keep the isotopy class of the colored surface.  Also the direct  \textit{projection arguments}
given by them are nicely explained. The possible tunneling morphisms are always by tensor products composed of 
contributions of the form $d_k\circ \mathfrak{k}^{\tau +g}$, where $\tau $ is the corresponding component of the tunneling vector, and
$g=0$ or the absolute value of the total genus difference of the surfaces in the two partitions, and $k$ is the number of surfaces 
in the corresponding partition set. 

Let us assume that $S,S'$ are two oriented surfaces. We consider $1$-partitions.  We will see that in this case only (i), (ii) or (iii) below appear. 
Note that this occurs if the partition is admissible.  Let $F_+: V\rightarrow F(S')=V^{\otimes n}$ and  $F_-:V\rightarrow F(S)=V^{\otimes k}$. These homomorphisms appear inside a tensor product 
if the $1$-partition is contained in an $r$-partition. Because of the above we can assume that $\tau =0$ or $\tau =1$.  
Note that if  $\tau =0$ occurs also the relation for $\tau =1$ appears. In the following we assume that $g(S)\geq g(S')$ so $F_+$ is the homomorphism
into the module associated to the lower genus surface. 

\noindent (i) Let $g(S)-g(S')>1$. Then we have $F_+=0$ and $F_-=d_k\circ \mathfrak{k}^{\tau }$.  If $\tau =1$ then the all dotted surface $S$ is zero. 
If $\tau =0$ we get the relation that the sum of components of $S$ with all but one component dotted is zero, and $S$ with all components dotted is zero. 

\noindent (ii) Let $g(S)-g(S')=1$. If $\tau =1$ then $F_+ =0$ and $F_-=d_k\circ \mathfrak{k}$. So we get a relation that twice the surface $S$ with all components 
dotted is $0$. If $\tau =0$ then $F_+=d_n\circ \mathfrak{k}$ and $F_-=d_k$. So we have first the relation that $2$ times the surfaces $S'$ with all dotted components is equal to the linear combination of surfaces $S$ with all but one component dotted. Furthermore we have the relation that the all dotted surface $S$ is $0$. 

\noindent (iii) Let $g(S)-g(S')=0$. If $\tau =1$ then $2$ times the surfaces with all dotted components are equal. If already $\tau =0$ occurs then we have that the linear combinations with all but one component dotted are equal, and the surfaces with all components dotted are equal.
 
The relation $\mathfrak{k}^2=0$ implies that the relations resulting from a particular part in a partition set are described by one of (i), (ii), (iii) above.    

Many calculations are possible in the basis of the special Khovanov Frobenius algebra, which are very difficult in general. 
Even in the case of the universal quadratic Frobenius algebra $V=R[x]/(x^2-hx-t)$ with $R=\mathbb{Z}[h,t]$ with 
$\Delta (1)=1\otimes x+x\otimes 1+h1\otimes 1$ and $\varepsilon (1)=0$, $\varepsilon (x)=1$ it follows that 
$\mathfrak{k}=2x+h$, which is easily seen \textit{not} to be nilpotent. This implies that tunneling phenomena 
of all orders are built into the structure. The algebra involved in calculating Bar-Natan modules for the case of general Frobenius algebras seems more difficult.

Finally we state some special technique about tensor products and push-outs in our setting.
We work in a linear category $\mathcal{L}$ as in section \ref{sec=algebra}. For a diagram
$V \buildrel \psi \over \longleftarrow U \buildrel \varphi \over \longrightarrow W$ let $P(\varphi ,\psi )$
denote the pushout and let $Q(\varphi ,\psi )$ denote the quotient module $(V\oplus W)/P(\varphi ,\psi )$ defined the pushout.
In the following we will always denote $V/(\varphi )$ to denote the quotient of some $R$-module $V$ 
by the image of some $R$-homomorphism $\varphi :U\rightarrow V$. 
First note that if $\psi =\rho \circ \varphi$ then there is a natural isomorphism:
$$Q(\varphi ,\rho \circ \varphi)\cong V/ \varphi \oplus W$$
induced from the epimorphism
$$V\oplus W\rightarrow V/ \varphi \oplus W$$
by $(v,w)\mapsto ([v],w-\rho (v))$, which has kernel precisely $P(\varphi ,\psi )$. 

Given $V_i \buildrel \psi_i \over \longleftarrow U_i \buildrel \varphi_i \over \longrightarrow W_i$ for $i=1,2$ note that 
$P(\varphi_1\otimes \varphi_2, \psi_1\otimes \psi_2)\subset (V_1\otimes V_2)\oplus (W_1\otimes W_2)$ while 
$P(\varphi_1,\psi_1)\otimes P(\varphi_2,\psi_2)\subset (V_1\oplus W_1)\otimes (V_2\oplus W_2)\subset (V_1\otimes V_2)\oplus (W_1\otimes W_2)\oplus 
(V_1\otimes W_2)\oplus (W_1\otimes V_2)$.  But we also have $P(\varphi_1\otimes \psi_2, \psi_1\otimes \varphi_2)\subset (V_1\otimes W_2)\oplus (W_1\otimes V_2)$. It is not hard to see that the sequence:
$$P(\varphi_1\otimes \psi_2,\psi_1\otimes \varphi_2)\subset P(\varphi_1,\psi_1)\otimes P(\varphi_2,\psi_2)\twoheadrightarrow P(\varphi_1\otimes \varphi_2,\psi_1\otimes \psi_2)$$
is exact, with the right hand homomorphism defined by restriction of the projection. 
Taking the quotient of the standard exact sequence 
$$(V_1\otimes W_2)\oplus (W_1\otimes V_2)\subset (V_1\oplus W_1)\otimes (V_2\otimes W_2)\twoheadrightarrow (V_1\otimes V_2)\oplus (W_1\otimes W_2)$$
we have the short exact sequence: 
$$Q(\varphi_1\otimes \psi_2,\psi_1\otimes \varphi_2) \hookrightarrow Q(\varphi_1,\psi_1)\otimes Q(\varphi_2,\psi_2)$$
$$\twoheadrightarrow Q(\varphi_1\otimes \varphi_2,\psi_1\otimes \psi_2).$$
If we apply this in the case $\psi_1=\rho_1\circ \varphi_1$ and $\phi_2=\rho_2\circ \psi_2$ we have the short exact sequence:
$$(V_1\otimes W_2)/(\varphi_1\otimes \psi_2) \oplus (W_1\otimes V_2)\hookrightarrow (V_1/\varphi_1\oplus W_1)\otimes (V_2\oplus W_2/\psi_2)$$
$$\twoheadrightarrow Q(\varphi_1\otimes (\rho_1\circ \varphi_1),(\rho_2\circ \psi_2)\otimes \psi_2),$$
and using $(V_1\otimes W_2)/(\varphi_1\otimes \psi_2)\cong V_1/\varphi_1\oplus W_2/\psi_2$ we get from the sequence above the isomorphism
$$ Q(\varphi_1\otimes (\rho_1\circ \varphi_1),(\rho_2\circ \psi_2)\otimes \psi_2)\cong ((V/\varphi_1)\otimes V_2)\oplus (W_1\otimes (W_2/\psi_2))$$
A standard situtation where this can be applied is for a pair $F_{\pm}$ as follows:
$$
\begin{CD}
V\otimes V @<\mathfrak{k}^{g_1}\otimes \mathfrak{k}^{\tau_2+g_2} << V\otimes V @>\mathfrak{k}^{\tau_1+g_1}\otimes \mathfrak{k}^{g_2}>> V\otimes V
\end{CD}
$$
with $g_1,g_2\geq 0$ integers, where we get for the corresponding quotient module defined from the pushout:
$$Q\cong (V/\mathfrak{k}^{\tau_1})\otimes V)\oplus (V\otimes (V/\mathfrak{k}^{\tau_2})$$

Assume that $M$ is Haken-irreducible. 
Recall that the basic pushout quotient to be computed is $P(d_n\circ \mathfrak{k}^{\tau },d_m\circ \mathfrak{k}^{g+\tau })$ for the morphism
$V^{\otimes n}\longleftarrow V\longrightarrow V^{\otimes m}$ with $n\geq 1$ and $m,n,\tau$ nonnegative integers.
This is the quotient defined from $W: S\leftarrow C\rightarrow S'$ with $W$ connected, $g(S)\geq g(S')$ and $g:=g(S)-g(S')$ and $\tau $ a minimal
tunneling number.  
This is not easy in general for the category of $V^{\otimes j}$-modules if $j>0$.
We will assume that $R$ is a field and only discuss the $R$-module structure. By generalizing the morphism above 
first choose for $n\geq 1$ splitting homomorphisms $\sigma_n: V^{\otimes n}\rightarrow V$ of the injective morphism $d_n: V\rightarrow V^{\otimes n}$. 
Then define the $R$-epimorphism 
$$V^{\otimes  n}\oplus V^{\otimes m}\rightarrow V^{\otimes n}/(d_n\circ \mathfrak{k}^{\tau })\oplus V^{\otimes m}$$
by
$$(x,y)\mapsto ([x],y-d_m\circ \mathfrak{k}^g\circ \sigma_n(x))$$
If $(x,y)$ is in the kernel of this homomorphism then $[x]=0$ thus $x=d_n\circ \mathfrak{k}^{\tau }x'$ for some $x'\in V$. Thus 
$\sigma_n(x)=\mathfrak{k}^{\tau }x'$. It follows that $y=d_m\circ \mathfrak{k}^g\circ \mathfrak{k}^{\tau }x'$. Thus 
$(x,y)\in P(d_n\circ \mathfrak{k}^{\tau },d_m\circ \mathfrak{k}^{g+\tau })$, so we have established the $R$-isomorphism
$$Q(d_n\circ \mathfrak{k}^{\tau },d_m\circ \mathfrak{k}^{g+\tau })\cong V^{\otimes n}/(d_n\circ \mathfrak{k}^{\tau })\oplus V^{\otimes m}$$
Observe that this does not depend on the genus difference $g$
but the isomorphism identifying with the quotient does. This makes it difficult to use the result inductively. 

Suppose that $R$ is a field. 
To understand the problems in determining Bar-Natan modules consider the inductive calculation according to Theorem \ref{thm=minimal terminal} in our case.
So suppose that we have calculated $W(S)_-$ for some incompressible surface $S$. Then $W(S)$ is the cokernel of some
epimorphism as follows:
$$\oplus_{i\in I}V_i\rightarrow V^{\otimes \underline{S}}\oplus W(S)_-,$$
where $i$ runs through \textit{all} edges corresponding to tunneling invariants from $S$ to $S'\leq S$. Thus $V_i=F(C)$ for some 
$S\longleftarrow C\longrightarrow S'$.
Now contributions from $S$ to $S$ for the same input and output partition are easily seen to be 
consequences of relations from loop isotopies. But already different partitions of the same surface $S$ could underly 
tunneling invariants giving rise to different relations (just imagine partitions $\mathsf{T_1}=\{S_1,S_2\}$, $\mathsf{T_2}=\{S_3\}$, and
$\mathsf{S}_1=\{S_1\}$, $\mathsf{S}_2=\{S_2,S_3\}$).   
In general, $|I|\geq 2$ will give rise to many
\textit{back relations}. More precisely, if the tunneling graph were just a tree we could easily construct an $R$-module basis inductively 
easily by eliminating part that subspace of $F(S)$, which is in the image of $F_+$, using the relations $F_+(v)=F_-(v)$ for $v\in F(C)$. 
But of course in general there are many, possibly infinitely many, different tunneling invariants because $W(S)_-$ is a quotient
of $\oplus_{S'<S}F(S)$.
But if we have distinct parallel tunneling invariants (or any two different paths in the tunneling graph from $S$ to some $S'\leq S$) then 
a priori there could be relations $F_+v=F_+' v'\in F(S)$ but $F_-v\neq F_-v'$ with $v\in F(C)$ and $v'\in F(C')$ where we have 
$S\longleftarrow C\longrightarrow S'$ and $S\longleftarrow C'\longrightarrow S'$.   
Of course more complicated situations can occur where one or both relations are not directly geometric but coming from 
an algebraic pullback, so a composition along two edges of the tunneling graph.  
Note that the resulting edge will not be in the tunnel graph but \textit{algebraically} still have the same form considered 
above, i.\ e.\ composed of tensor products of $d_n\circ \mathfrak{k}^{\tau }$ respectively 
$d_m\circ \mathfrak{k}^{\tau +g}$. Of course in general it is not clear whether the genus difference appears on the $+$ or on the 
$-$-side. It appears in general on the side with the smaller genus of the pacakage.   
Situations where we can choose the order such that this appears uniformly throughout the whole tunneling graph are 
combinatorially much simpler.  

\begin{defn} A tunneling invariant for $S_+\geq S_-$ is called \textit{descending} if in the induced pair $F_{\pm }$ 
of homomorphisms each tensor factor is of the form above, i.\ e.\ of the form $F_+=d_n\circ \mathfrak{k}^{\tau }$ and 
$F_-=d_m\circ \mathfrak{k}^{\tau +g}$ with $n\geq 1$ and $m,g,\tau $ nonnegative integers. The important assumption here 
is that the genus differences of the tunneling invariant are $\geq 0$ for \textit{all parts}. \end{defn}

\begin{defn}Let $P\subset \mathfrak{t}$ be an ordered set. Then the tunneling graph $\Gamma_P(M,\alpha )$ is called
\textit{descending} if all tunneling invariants in the graph are descending. \end{defn}

Suppose that the tunneling graph is descending. 
We will see that in some special situations back relations do not contribute.
But in general there are many \textit{distinct} 
partitions underlying parallel tunneling invariants. (just think about $\alpha =\emptyset $ and $M=\Sigma \times I$,
$\Sigma $ a closed oriented surface, see section 10).
If we have parallel $r$-tunneling invariants but with the same partitions on both \textit{input} and \textit{output} then the difference is 
only in the tunneling vectors, and the 
result follows just as for $r=1$. But it is possible to have the same input partition but different output partitions, and a priori 
there seems to be no reason why this should not give relations from $F_+(v)=F_+'(v)$ but with $F_-(v)\neq F_-'(v)$.
Note that for $V$ manifold-induced if $F_+(v)=F_+'(v')$, even with $F_+'\neq F_+$ one can conclude that there exist morphisms 
$\lambda : V''\rightarrow V$ and $\lambda ': V''\rightarrow V'$ such that $F_+\circ \lambda =F_+'\circ \lambda '$. 

\begin{thm} Suppose $P\subset \mathfrak{t}$ can be ordered such that the resulting tunneling graph is descending. 
Furthermore suppose that there are no back-relations in the inductive calculation using the order of $P$.  
Then the $R$-module $\mathcal{W}_P(M,\alpha )$ does not depend on genus differences at the vertices but only on the partitions and 
tunneling vectors.\end{thm}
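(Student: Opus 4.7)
The plan is to compute $\mathcal{W}_P(M,\alpha)$ by running the inductive procedure of Theorem \ref{thm=minimal terminal} along the chosen well-ordering of $P$, imposing at each stage $S$ the relations read off from all tunneling edges into $S$. The descending hypothesis guarantees that every $F_\pm$ pair has the canonical form analyzed in the paragraphs just before the theorem, and the no-back-relations hypothesis ensures that the relations at stage $S$ can be processed without revisiting the module $\mathcal{W}(S)_-$ already constructed at lower stages.

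First I would fix $S \in P$ and assume inductively that $\mathcal{W}(S)_-$ has been identified as a module depending only on the partition and tunneling-vector data of edges strictly below $S$. By Theorem \ref{thm=minimal terminal} the next module $\mathcal{W}(\mathfrak{s}(S))$ is the cokernel of
$$\bigoplus_e F(C_e) \longrightarrow \mathcal{W}(S)_- \oplus V^{\otimes \underline{S}},$$
the sum running over tunneling edges $e$ at $S$ with span $S \leftarrow C_e \rightarrow S'$ and map $v \mapsto (F_-(e)v,\, -F_+(e)v)$. Unpacking the no-back-relations hypothesis, the image projects cleanly onto a submodule $N_S \subset V^{\otimes \underline{S}}$, so that the cokernel splits as $\mathcal{W}(S)_- \oplus (V^{\otimes \underline{S}}/N_S)$ with $N_S$ determined entirely by the edges at $S$.

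Next I would invoke the basic pushout identity recorded just before the theorem,
$$Q(d_n \circ \mathfrak{k}^\tau,\, d_m \circ \mathfrak{k}^{g+\tau}) \;\cong\; V^{\otimes n}/(d_n \circ \mathfrak{k}^\tau) \;\oplus\; V^{\otimes m},$$
which is manifestly independent of $g$. For an $r$-partition edge, $F_\pm$ is a tensor product of $r$ such basic factors, and I would iterate the short exact sequence for pushout quotients of tensor products, also recorded just before the theorem, to conclude that the contribution of each single edge to $N_S$ depends only on the numbers $n_i, m_i, \tau_i$, with the genus shifts $g_i$ absorbed away.

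The hard part will be combining the relations from several distinct tunneling invariants meeting at the same vertex $S$: one must intersect the resulting submodules inside $V^{\otimes \underline{S}}$ and show the intersection still depends only on partition and tunneling-vector data. The descending hypothesis carries the weight here, since it forces every $+$-map into $V^{\otimes \underline{S}}$ to have the uniform genus-free shape $\bigotimes_i d_{n_i}\circ \mathfrak{k}^{\tau_i}$, so that all $g_i$ live exclusively on the already-processed $-$ side and are folded harmlessly into $\mathcal{W}(S)_-$. Running this inductively from the bottom of $P$ upward would then yield the claimed $g$-independence of $\mathcal{W}_P(M,\alpha)$ as an $R$-module.
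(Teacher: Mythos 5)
Your proposal is correct and takes essentially the same route as the paper: induction over the ordering via Theorem \ref{thm=minimal terminal}, the genus-independent identity $Q(d_n\circ\mathfrak{k}^{\tau},d_m\circ\mathfrak{k}^{g+\tau})\cong V^{\otimes n}/(d_n\circ\mathfrak{k}^{\tau})\oplus V^{\otimes m}$ applied factorwise to each part, with the descending hypothesis placing every genus shift on the already-processed side and the no-back-relations hypothesis guaranteeing that $\mathcal{W}(S)_-$ is not revisited. The only difference is at the multiple-edge step, which the paper resolves by ordering the parallel edges and successively quotienting and choosing linear complements in $F(S)$ (this is where $R$ being a field enters), i.e.\ one quotients by the sum of the genus-free images with a well-defined correction into $\mathcal{W}(S)_-$, rather than ``intersecting'' submodules as you phrase it.
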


\begin{proof} This is proved by some inductive application of the method above following Theorem \ref{thm=minimal terminal}. 
Suppose we add an incompressible surface $S$ in the order. Then we have relations of the form above for each $S\leftarrow W\rightarrow S'$ realizing a 
tunnel-invariant connecting $S$ to $S'$ with $S\geq S'$. For $S=S'$ there can be loop relation, which are permutations of the tensor factors and thus do not 
depend on the genera of components of $S,S'$. Otherwise we have tensor products of relations realized by $W$ connected.
If we let $W(S)_-$ denote the local module on all incompressible surfaces with $S'<S$. Then in the exact sequence in 5.7 we
see that the relation for some $r$-tunneling invariant has the form $V^r \rightarrow F(S) \oplus W(S)_- $, and this is a tensor product 
of $V\rightarrow V^{\otimes n}\oplus W(S)_-$ with $V^{\otimes n}\subset F(S)$. Thus if there are no multiple edges in the tunnel graph 
it is easy to see that we can define a direct sum of epimorphisms similarly to the above with the direct sum over all edges originating from 
$S$ to $S'<S$. If there are multiple edges between $S$ and $S'$ for some specific $S'$ then order the edges in some way. 
Then proceed first as before. This will replace $F(S)$ by some quotient $F(S)/R$. Choose a linear complement $F(S)^{(2)}$ of $R$ in $F(S)$ and replace 
$F_+$ for the next edge by the usual $F_+$ composed with the projection onto $F(S)^{(2)}$. Proceed with all multiple edges between $S$ and some $S'$ in this way. Because there are no back relations this will construct a basis of $\mathcal{W}_P(M,\alpha )$.  
\end{proof}

We will see in section \ref{sec=homology} that a typical example for a descending tunnel graph is with $\alpha =\emptyset$ and $M=\Sigma \times I$ for 
$\Sigma $ a closed oriented surface. Even though there are back-relations they won't contribute in this case.  
Not that for this example all incompressible surfaces are parallel to $\Sigma \times \{\frac{1}{2}\}$.
The surfaces can be ordered in such way that surfaces with a smaller number of components are $<$ than surfaces with a larger number of components.
The corresponding tunneling graph will be descending. In \cite{BK} (unpublished) the algebra structure of $\mathcal{W}(\Sigma \times I)$ has been determined
and does depend on the genus of $\Sigma $, and thus on the genus differences in the tunneling graph. But there is an inductive way to construct
an $R$-module basis of the infinite dimensional Bar-Natan module independent of genera. 

The comments in this section clearly show that the homomorphism $\mathfrak{k}$ and the combinatorics of partitions are the essential algebraic ingredient in understanding the origin of the structure of Bar-Natan skein modules from the tunneling graph. Even if the tunneling graph is known 
it seems in general quite difficult to determine the modules explicitly, even over $R$. 
But our results shows precisely \textit{what} kind geometric information is reflected in the relations of the Bar-Natan module
and \textit{in which way}. 

\vskip .3in

\section{Local Bar-Natan skein theory}\label{sec=local}

\vskip .1in

In section \ref{sec=presentations} the importance of using a suitable order on a terminal set $\mathfrak{t}$ or a subset $\mathcal{S}$ of it for the calculation of the 
colimit module of a functor has been emphasized.
In principle any order could be used but the choice of subset usually suggests specific choices in order to understand how the 
module structure is related with the geometry $(M,\alpha )$.  
 
Throughout this section we assume that $M$ is irreducible and connected. Let $\mathfrak{t}$ be the set of isotopy classes of incompressible surfaces in $M$
bounding $\alpha $.
Let $\mathcal{S}:=\mathcal{S}(M,\alpha )$ be the set isotopy classes of incompressible surfaces in $M$ bounding $\alpha $ without any parallel 
\textit{closed} components. If $\alpha =\emptyset$ then the empty surface is an element of $\mathcal{S}$. 

There is the forget function $\mathfrak{p}: \mathfrak{t}\rightarrow \mathcal{S}$ defined by mapping the isotopy class of an incompressible surface bounding $\alpha $ to some element of $\mathcal{S}$ by forgetting from each subsurface of parallel components all except one. 
We will study $P$-local modules for certain subsets $\mathcal{P}\subset \mathcal{S}$ with $P:=\mathfrak{p}^{-1}(\mathcal{P})$.
If an ordering has been chosen on $\mathcal{P}$ there can be defined an ordering on $P$ as follows: First order the elements of $P$ according to their images in $\mathcal{P}$. Then order the elements of $\mathfrak{p}^{-1}(S)$ for some $S\in \mathcal{P}$ using an arbitrary natural order induced from a tubular neighborhood $S\times I\subset M$ (this can be made unique by specifying an orientation of $S$ and $M$).  

A partial ordering of the set $\mathcal{S}$ can be defined as follows:

\begin{defn} For each surface $S$ we define the \textit{genus sequence} of the surface as the following tuple of 
nonnegative numbers $g_1,g_2,g_3,\ldots ,g_r$: $g_1$ is the genus of the surface $S$. Then we consider all unions of $|S|-1$ components of $S$ and 
putting the corresponding genera into lexicographic order. This is followed by all unions of $|S|-2$ components of $S$ and putting the corresponding genera 
into lexicographic order. Finally we consider all connected components and their genera, put into lexicographic order. 
\end{defn}

Note that for each surface the length of its genus sequence is $2^{|S|}-1$. The genus sequence is completely determined by the
sequence of genera of its components. But ordering lexicographically by components the surfaces is in general very different from ordering with respect to the genus sequence as in the definition.

Let $g_b(S)$ be the genus sequence defined from the bounded subsurface of $S$, i.\ e.\ the union of components with nonempty boundary, and similarly 
define $g_c(S)$ defined from the closed components.  

\begin{exmp} Consider $M=S^1\times D^2$ with $\alpha $ defined by $2n$ parallel nontrivial curves in the boundary not bounding disks in $M$. The incompressible surfaces in this case all have the same genus sequences. But note that the different isotopy classes of orientable incompressible surfaces correspond to crossingless matchings of $2n$ elements, see \cite{R}. In \cite{R} a particular order had to be chosen in order to derive a suitable presentation of $\mathcal{W}(M,\alpha )$ for $V=V_K$.
\end{exmp}

Consider a set $\mathfrak{p}^{-1}(S)$ for some $S\in \mathcal{S}$. The elements in this set only differ by the number of parallel components 
of closed components. 
Let $S_1,\ldots ,S_{\ell }$ be the closed components of $S$ representing some element in $\mathcal{S}$ ordered in some way but such that
the genera are nondecreasing $g_1\leq  g_2\leq \ldots \leq g_{\ell }$.
Given $T\in \mathfrak{p}^{-1}(S)$ let $\mathfrak{k}=(k_1,k_2,\ldots ,k_{\ell})$ denote the sequence of positive integers defined by $k_i$, which is the number of components
in $T$ parallel to $S_i$ for $1\leq i\leq \ell$.   
In this way we get a linear order on $\mathfrak{t}$
from a chosen order on $\mathcal{S}$.
Then order each of the sets $\mathfrak{p}^{-1}(S)$ by lexicographic order of the sequences. 

It is easy to determine the vector space $F(T)$. Let $d_b$ be the the number of components with nonempty boundary in 
$S:=\mathfrak{p}(T)$, i.\ e.\ the length of $g_b$.
Let $d:=\sum_{i=1}^{\ell}k_i$.  
Then $F(T)=V^{\otimes d_b}\otimes V^{\otimes d}$. 

Note that in the case that $\mathcal{P}$ does not contain any surfaces with closed components we have that $\mathfrak{p}$ is a one-to-one correspondence $\mathfrak{p}^{-1}(\mathcal{P})\rightarrow \mathcal{P}$. This holds in particular for $\mathcal{P}=\mathcal{P}_b$ the set of all surfaces in $S$ without closed components. So we can define a local Bar-Natan module for surfaces without closed components. This module is of course trivial if $\alpha =\emptyset$.
But it has quite interesting structure if $|\alpha |\geq 1$. In this case we do not distinguish between $\mathcal{P}$ and $P$ and identify
$P\subset \mathcal{S}$. 

\begin{exmp} Suppose that $P$ consists of a single surface $S\in \mathcal{S}$ without closed components. We assume that $|\alpha |\geq 1$. 
Recall that if a surface component in a partition set $\mathsf{S}_i$ contains a certain component of $\alpha $ then $\mathsf{T}_i$ contains a component
bounding the same component. In the case $S=S'$ this implies that $\mathsf{T}_i=\mathsf{S}_i$ for $i=1,\ldots ,r$. It follows that there are \textit{no
nontrivial tunneling relations} in this case. Then $\mathcal{W}_{\{S\}}\cong V^{\otimes \underline{S}}$. Note that no nontrivial isotopy loop relations 
are possible in the \textit{all bounded} case. 
\end{exmp}

\begin{exmp} Suppose that $P=\{S,S'\}$ with $S,S'$ two nonisotopic surfaces bounding $\alpha $ with $|\alpha |\geq 1$. We can assume that $g(S)\geq g(S')$. We have an exact sequence:
$$\oplus_{\mathfrak{p}}V^{\otimes r(\mathfrak{p})}\rightarrow  V^{\underline{S}}\oplus V^{\underline{S'}}\twoheadrightarrow \mathcal{W}_{\{S,S'\}}=:\mathcal{W},$$
where the left hand sum runs through all admissible tunneling invariants $\mathfrak{p}$ from $S$ to $S'$, and $r(\mathfrak{p})$ is the length of the two partitions, 
defined by the tunneling invariant $\mathfrak{p}$. 
Let $\tau (\mathfrak{p})=(\tau_1,\ldots ,\tau_r)$ denote the tunneling vector associated to the partition $\mathfrak{p}$. 
Consider the possible tunneling invariants for a fixed pair of $r$-partitions.
By ordering the components of $S,S'$ according to the partitions we can write
$$F_+=d_{n_1}\mathfrak{k}^{\tau_1+\lambda_1}\otimes \ldots \otimes d_{n_r}\mathfrak{k}^{\tau_r+\lambda_r}: V^{\otimes r}\rightarrow V^{|S'|}$$
respectively
$$F_-=d_{m_1}\mathfrak{k}^{\tau_1+\mu_1}\otimes \ldots \otimes d_{m_r}\mathfrak{k}^{\tau_r+\mu_r}: V^{\otimes r}\rightarrow V^{|S|}.$$
The numbers $\mu_i,\lambda_i$ are defined as follows:
For each $i$ let $g_i$ respectively $g_i'$ denote the genus of the components of $S$ respectively $S'$ in $\mathsf{S}_i$ respectively 
$\mathsf{T}_i$. for $i=1,\ldots ,r$. Then  if $g_i\geq g_i'$ we have $\lambda_i=g_i-g_i'$ and $\mu_i=0$. If $g_i\leq g_i'$ we have 
$\lambda_i=0$ and $\mu_i=g_i'-g_i$.  
For each specific tunneling invariant we can compute the pushout and resulting quotient module $Q(F_+,F_-)$ using the results from section 7. 
It is obvious that the calculation can become very involved if many tunneling invariants are admissible. We consider the most simple nontrivial case. 

Let $|\alpha |=2$ such that $|S|,|S'|\leq 2$, and still assume $g(S)\geq g(S')$. If $|S|=|S'|=1$ there is only the $1$-partition. In this case there is a unique minimal 
tunneling number $\tau $ and $g=g_1\geq g'=g_1'$. Then $F_+=\mathfrak{k}^{g_1-g_1'+\tau }$ and $F_-=\mathfrak{k}^{\tau }$. Thus
$\mathcal{W}\cong V\oplus V/\mathfrak{k}^{\tau }$. 
Note that $\tau \geq 1$ because otherwise $S$ would be compressible. (In particular for $V=V_K$ and $R$ a field,
if $\tau =1$ then $\mathcal{W}$ has dimension 3, otherwise it has dimension $4$.)
If $|S'|=2$ and $|S'|=1$ then $F_+$ is changed to $d_2\circ \mathfrak{k}^{g_1-g_1'+\tau}: V\rightarrow V\otimes V$ and we  have 
$\mathcal{W}\cong V^{\otimes 2} \oplus V/\mathfrak{k}^{\tau }$. We have $\tau \geq 1$ because otherwise $S$ is compressible.
(If $V=V_K$ and $R$ a field then $\mathcal{V}$ has dimension $5$ respectively $6$ if $\tau =1$ respectively $\tau >1$.)
If $|S|=2$ and $|S'|=1$ then $F_-=d_2\circ \mathfrak{k}^{\tau }$ and $F_+=\mathfrak{k}^{g_1-g_1'}\circ \mathfrak{k}^{\tau }$.
By the results from section \ref{sec=to Bar-Natan} we get the $R$-module isomorphism $\mathcal{W}\cong V\oplus V/\mathfrak{k}^{\tau}\oplus (V\otimes V)/d_2$.
In this case $\tau \geq 1$, if $g(S)=g(S')$ because otherwise we would get the surface $S$ by a separating neck cutting of the incompressible surface 
$S'$, which is not possible. (For $V=V_K$ and $R$ a field we have that the dimension of $\mathcal{W}$ is $5$ if $\tau =1$, and is $6$ if $\tau >1$. In the second case there is no relation while in the first case the dotted surface $S'$ is the sum of two copies of $S$ with a dot on one of the components.)
If $g(S)>g(S')$ then also $\tau =0$ is possible. (For $V=V_K$ and $R$ a field in this last case the dimension can drop to $4$. This is because also 
relations from white surfaces contribute.)
The most interesting case is $|S|=|S'|=2$ because in this case two partitions can be possible.
Suppose that only the $1$-partition is admissible. Then we get $F_+=d_2\circ \mathfrak{k}^{\tau +g-g'}=(\textrm{Id}\otimes \mathfrak{k}^{g-g'})\circ (d_2\circ \mathfrak{k}^{\tau })$ and $F_-=d_2\circ \mathfrak{k}^{\tau }$. We know from section \ref{sec=to Bar-Natan} that for $R$ a field we have the $R$-module isomorphism
$$\mathcal{W}\cong V^{\otimes 2}\oplus V^{\otimes 2}/(d_2\circ \mathfrak{k}^{\tau })$$
(For $V=V_K$ and $R$ a field we have if $\tau =0$ then the dimension is $6$, otherwise the dimension is $7$.)

Suppose that only the $2$-partition is admissible. There are two different possibilities for tunneling invariants in this case.
Let $(\tau_1,\tau_2)$ be the tunneling vector.  
Let $g_1\leq g_2$ be the genera of the two components of $S$, and let $g_1',g_2'$ be the corresponding genera of the components of $S'$
related by the partition. Suppose first $g_1\geq g_1'$ and $g_2\geq g_2'$. Then $F_+=\mathfrak{k}^{\tau_1 +g_1-g_1'}\otimes \mathfrak{k}^{\tau_2+g_2-g_2'}$
and $F_-=\mathfrak{k}^{\tau _1}\otimes \mathfrak{k}^{\tau_2}$, thus $\mathcal{V}\cong V^{\otimes 2}\oplus V^{\otimes 2}/(\mathfrak{k}^{\tau_1}\otimes \mathfrak{k}^{\tau_2})$. The other possibility (at least up to numbering the partition sets) is $g_1\geq g_1'$ and $g_2<g_2'$. 
Then $F_+=\mathfrak{k}^{\tau_1+g_1-g_1'}\otimes \mathfrak{k}^{\tau_2}$ and $F_-=\mathfrak{k}^{\tau_1}\otimes \mathfrak{k}^{\tau_2+g_2'-g_2}$.  
Here we have $\mathcal{V}\cong (V/\mathfrak{k}^{\tau_1}\otimes V)\oplus (V\otimes V/\mathfrak{k}^{\tau_2})$. This is the nondescending 
case. 
Note that in all cases above the $R$-module structure for $R$ a field does \textit{not} depend on the genera differences, it only depends on the tunneling
vectors. 
\end{exmp}

\begin{exmp} Suppose that $|\alpha |\geq 1$ and let $P$ be the
set of isotopy classes of \textit{connected} incompressible surfaces bounding $\alpha $.  In particular these surfaces have no closed components. Suppose we consider the set of isotopy classes of incompressible surfaces in a fixed component $\Gamma_{P'}$ of $\Gamma_P$, i.\ e.\ let $P'$ be a homology class of $P$.
We consider the $P'$-local module $\mathcal{W}_{P'}(M;\alpha)$ based on connected incompressible surfaces bounding $\alpha $ in the component $\Gamma_{P'}$ of $\Gamma_P$.
Note that $\mathcal{W}_P(M; \alpha )=\oplus_{P'}\mathcal{W}_{P'}(M,\alpha )$.
Next let $g_1\leq g_2\leq \ldots $ be the possible genera of surfaces
in $P'$. We can choose an order of the elements of $P'$ such that $S<S'$ implies $g(S)\leq g(S')$.
\end{exmp}
 
We next determine the possible tunneling invariants. There exist morphisms $W: S\leftarrow C\rightarrow S'$ for all possible choices 
of $S,S'$ because we assume that $S,S'$ are in the same homology class. It suffices to consider the minimal bordisms. Recall from section that in the case $S\geq S'$ 
the pair of morphisms $F_{\pm }$ is of the form
$F_+=\mathfrak{k}^{g(S)-g(S')+\tau}, F_-=\mathfrak{k}^{\tau }$ and $\tau =\tau (S,S')$ is the \textit{relative tunneling number}. Here we consider $S\leftarrow C\rightarrow S'$ giving rise to $F_+: V=F(C)\rightarrow F(S')=V$ and $F_-:V=F(C)\rightarrow F(S)=V$.   
Now for a surface $S$ define $\rho (S):=\min\{\tau(S,S')\}$, where the minimum is over all surfaces $S'$ with $S\geq S'$. 
We call this the \textit{tunneling number} of $S$. 

\begin{thm} Suppose $R$ is a field. Let $M$ be a connected oriented $3$-manifold and $\alpha $ an oriented nonempty $1$-manifold in 
$\partial M$. Let $P$ be the set of connected incompressible surfaces bounding $\alpha $. Then the local Bar-Natan skein module of a homology class $P'$ of $P$ is isomorphic as $R$-module
$$\mathcal{W}_{P'}(M;\alpha )=\bigoplus_{S\in P'}V/\mathfrak{k}^{\rho (S)},$$
with $\rho (S)$ the tunneling number of $S$ as defined above, and the isomorphism is an isomorphism of $V^{\otimes \underline{\alpha }}$-modules.\end{thm}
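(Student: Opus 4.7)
The plan is to apply the inductive colimit calculation from Theorem \ref{thm=minimal terminal} together with the pushout computation from Section \ref{sec=to Bar-Natan}, exploiting the fact that for connected incompressible surfaces bounding a nonempty $\alpha$ all tunneling invariants reduce to $1$-partitions and the resulting tunneling graph is descending.

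First I would set up the geometry. Since Section \ref{sec=local} assumes $M$ is irreducible, $(M,\alpha)$ is Haken-irreducible, so by Section \ref{sec=towards} the set of isotopy classes of incompressible surfaces is a minimal terminal set, and so then is $P$ for the local category $\mathcal{C}(M,\alpha)_P$. Fix a homology class $P' \subset P$ and well-order $P'$ by nondecreasing total genus, refining arbitrarily within each genus. Because every $S \in P'$ is connected with $\partial S = \alpha \neq \emptyset$, the product condition along $\partial M$ forces any tunneling invariant between $S, S' \in P'$ with $S \geq S'$ to consist of a single $1$-partition $(\{S\},\{S'\})$. As recorded in Section \ref{sec=tunneling}, the induced pair of morphisms then has the simple form $F_- = \mathfrak{k}^\tau$ from $V=F(C)$ to $V=F(S)$ and $F_+ = \mathfrak{k}^{\tau+g}$ from $V$ to $V=F(S')$, where $g = g(S)-g(S') \geq 0$ and $\tau \geq \tau(S,S')$. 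Moreover, no nontrivial self-isotopy loop morphisms occur, since all components are bounded.

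Next I would run the inductive computation of $\mathcal{W}_{P'}(M,\alpha)$ using Theorem \ref{thm=minimal terminal} applied to the terminal set $P'$. The base case is the minimum $S_1$ of $P'$, giving $\mathcal{W}_{\{S_1\}} \cong V = V/\mathfrak{k}^{\rho(S_1)}$ under the convention $\rho(S_1) = \infty$ with $V/\mathfrak{k}^\infty := V$. For the inductive step, assume $\mathcal{W}_{\mathfrak{s}(S_n)_-} \cong \bigoplus_{S < S_n} V/\mathfrak{k}^{\rho(S)}$. Adding $S_n$ contributes the summand $V = F(S_n)$ together with a new relation for each tunneling invariant $S_n \to S_i$ with $S_i < S_n$. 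Choosing $S^*$ realizing $\rho(S_n) = \tau(S_n,S^*)$ and letting $g^* = g(S_n) - g(S^*)$, I would first impose only the minimal relation $\mathfrak{k}^{\rho(S_n)} v |_{S_n} = \mathfrak{k}^{\rho(S_n)+g^*} v|_{S^*}$, and apply the pushout computation from Section \ref{sec=to Bar-Natan} specialized to $n=m=1$: the change of variable $(x,y) \mapsto ([x]_{\mathfrak{k}^{\rho(S_n)}},\, y + \mathfrak{k}^{g^*} x)$ identifies the resulting quotient of $\mathcal{W}_{\mathfrak{s}(S_n)_-} \oplus V$ with $\mathcal{W}_{\mathfrak{s}(S_n)_-} \oplus V/\mathfrak{k}^{\rho(S_n)}$, producing the desired direct-sum contribution.

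The main obstacle will be verifying the \emph{no back-relations} property formalized at the end of Section \ref{sec=to Bar-Natan}, namely that the remaining tunneling relations $\mathfrak{k}^{\tau_i} v|_{S_n} = \mathfrak{k}^{\tau_i+g_i} v|_{S_i}$ for $\tau_i \geq \rho(S_n)$ and $S_i \neq S^*$ produce no further identifications. Using the minimal relation to substitute in the $S_n$-coordinate rewrites each such relation as an entirely lower-level relation $\mathfrak{k}^{\tau_i + g^*} v|_{S^*} = \mathfrak{k}^{\tau_i + g_i} v|_{S_i}$ in $\mathcal{W}_{\mathfrak{s}(S_n)_-}$; one must show this is already implied by the tunneling relation between $S^*$ and $S_i$ that is present in $P'$. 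The key geometric input is the composition of bordisms, which yields $\tau(S_n,S_i) \leq \rho(S_n) + \tau(S^*,S_i)$ when $S^* \geq S_i$ (and a symmetric inequality otherwise), together with the genus bookkeeping $g^* + (g(S^*) - g(S_i)) = g_i$; combined with the pullback principle of Section \ref{sec=presentations} this forces the substituted relation to factor through the existing $(S^*,S_i)$-relation. Finally, since $\mathfrak{k}$ commutes with multiplication by $V^{\otimes \underline{\alpha}}$ in the commutative Frobenius algebra $V$, every step of the argument respects the $V^{\otimes \underline{\alpha}}$-module structure, upgrading the $R$-module isomorphism to one of $V^{\otimes \underline{\alpha}}$-modules.
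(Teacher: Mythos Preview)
Your approach is essentially the paper's: reduce to $1$-partitions (since the surfaces are connected and bound $\alpha\neq\emptyset$), order by genus, apply the inductive scheme of Theorem~\ref{thm=minimal terminal} together with the pushout computation of Section~\ref{sec=to Bar-Natan}, and then argue that the residual ``back-relations'' contribute nothing new. The paper does the same, though much more tersely---its only justification for the last step is that ``the genus difference does not depend on the path.''

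There is, however, a genuine gap in your treatment of that last step. After substituting via the minimal $(S_n,S^*)$-relation, the leftover $(S_n,S_i)$-relation becomes $\mathfrak{k}^{\tau_i+g^*}v|_{S^*}=\mathfrak{k}^{\tau_i+g_i}v|_{S_i}$, and for this to follow from the existing $(S^*,S_i)$-relation one needs (say when $g(S^*)\geq g(S_i)$) the \emph{upper} bound $\tau(S^*,S_i)\leq \tau_i+g^*$. The composition inequality you actually invoke, $\tau(S_n,S_i)\leq \rho(S_n)+\tau(S^*,S_i)$, points the wrong way: it bounds $\tau(S^*,S_i)$ from \emph{below}, not above, and so cannot deliver the needed estimate. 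Likewise, the pullback principle of Section~\ref{sec=presentations} only shows that a bordism $S_n\leftarrow C\to S_i$ which \emph{geometrically factors through} $S^*$ yields a redundant relation; it says nothing about the minimal bordism, which need not factor. So the specific justification you give does not close the argument, and this step would have to be reworked---though to be fair, the paper's own argument here is comparably brief.
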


\begin{proof} The tunneling graph of $P'$ is the connected graph with a vertex representing a minimal genus connected surface in the component $\gamma_{P'}$. We can change the graph by omitting multiple edges because the relation defined from an oriented edge always has the 
form $F_+(v)=F_-(v)$ with $F_+=\mathfrak{k}^{\tau }$ and $F_-=\mathfrak{k}^{g+\tau }$ and $g$ the genus difference. Here $\tau $ is the tunneling number of the edge
and $g$ is the nonnegative difference of the genera of the vertices adjacent to the edge.  For topological reasons no tunneling number can be 
$0$. The rest is induction following Corollary \ref{cor=exact} to construct an $R$-module basis for the module. If the tunneling graph would be a tree the result would
follow immediately. But $\tau $ only enters the relations in a relative way, and if we consider two paths from a vertex to a lower order surface
the genus difference does not depend on the path. This means that there are no additional relations on the endpoint of the path coming from 
the loop in the tunneling graph. 
\end{proof}

\vskip .1in

Note that if $M$ is small, i.\ e.\ it has no closed incompressible surfaces, and $|\alpha |=1$ then 
$\mathcal{W} (M,\alpha )=\mathcal{W}_P(M,\alpha )$. Another typical nontrivial example for $(M,\alpha )$ and considering $P$ is constructed from a nullhomologous 
link $K\subset N$ with $M=N\setminus N(K)$ where $N(K)$ is an open tubular neighborhood of $K$ with $\alpha $ a union of 
longitudes of $K$ bounded by a Seifert surface. Recall that Seifert surfaces are connected by definition, see \cite{K2}.  
A discussion of different homology classes of incompressible Seifert surfaces bounding $K$ is given in detail in
in \cite{K2}.

We should remark that $\rho (S)>0$ for all surfaces with minimal genus which are not isotopic
to the minimal genus surface chosen. But $\rho (S)>0$ also for all surfaces not of minimal genus.
In fact if $\tau (S,S')=0$ for $g(S)>g(S')$ we could attach $1$-handles to $S'$ to get a surface isotopic to $S$, 
or alternatively we could attach $2$-handles to $S'$ , i.\ e.\ do neck-cutting to get the surface $S$. But $S'$ is incompressible,
so this is not possible. Also $\mathcal{V}_{P'}(M,\alpha )$ has torsion if and only if there is more than one incompressible surface 
bounding $\alpha $ in the homology class determined by $P'$.
 
If $V=V_K$ then a tunneling number $>1$ on edge means that the edge does not contribute a relation. On the other hand 
a tunneling number $1$ means that the dotted surface is equivalent to a smaller surface.
Thus for a component $P'$, the $R$-module generators of $P'$ are the white incompressible surfaces and all dotted incompressible surfaces without any smaller surface of \textit{tunneling-distance} $1$. 
 
It seems quite difficult to determine the $V$-module structure in general cases. In fact exact sequences which have to be analyzed following the methods of section \ref{sec=to Bar-Natan} will \textit{not} split, and the resulting $V^{\otimes \underline{\alpha }}$-modules will in general depend on genus differences. Note that, because of the relations, the
absolute genera are not appearing in the module structure, only differences in genus between incompressible surfaces and parts of incompressible 
surfaces. But note that in the general computation the empty surface is a possible incompressible surface component. 
Thus the genera of nullhomologous incompressible surface are present in the presentation. 

\section{Applications of the pullback principle}\label{sec=pullback}

Because of the irreducibility condition for the edges of the tunneling graph it is important to determine conditions when 
$W: T\longleftarrow C''\rightarrow S'$ is defined via pullback from $W': T\longleftarrow C\longrightarrow S$ and $W'': S\longleftarrow C''\longrightarrow S'$,
see section \ref{sec=presentations}.  

Suppose that $W',W''$ are given as above. We can assume, see \cite{KL}, that $W'$ is constructed from $T\times [-1,-1+\varepsilon]\subset M\times [-1,1]$ 
by adding first a union of embedded $1$-handles $h'^1$ to get a surface $C\subset M\times \{0\}$ and then adding embedded $2$-handles $h'^2$ and $3$-handles
$h'^3$ to finally get $S\subset M\times \{1\}$. Similarly for $W''\subset M\times [-1,1]$. Note that we can assume that each solid handle is embedded into a level. 

\begin{defn} We call $W'$ and $W''$  as above \textit{geometrically separated} if 
$M=M^1\cup M^2$ is a union along a surface, possibly with boundary, and $W'\cap (M^1\times [-1,1])$ respectively $W''\cap (M^2\times [-1,1])$ are 
products, i.\ e. for example $W'\cap (M^1\times [-1,1])=(W'\cap \{-1\})\times [-1,1]$. 
\end{defn}

\begin{thm}(Pullback theorem)  Let $W$ be the bordism from $T$ to $S'$ defined by glueing $W'$ on top of  $W''$ and reparametrizing the embedding 
into $M\times [-1,1]$. This manifold is diffeomorphic to the result $\widetilde{W}$ of embedding all $1$-handles $h'^1$ and $h''^1$ to $T\times [-1,-1+\varepsilon]\subset M\times [-1,1]$ to get a surface $C\subset M\times I$ followed by attaching all two-handles and $3$-handles $h'^2$, $h''^2$, $h'^3$ and $h''^3$ to $T\cup h'^1\cup h'^2 \cup \textrm{collar}$. Thus $\widetilde{W}$ is defined by pullback from $W'$ and $W''$.\end{thm}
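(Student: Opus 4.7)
The plan is to use geometric separation to freely commute handle attachments of $W'$ and $W''$ whose supports in $M \times I$ are disjoint, and then collect all $1$-handles at the bottom to form the middle surface $C$. First I would fix a parametrization of the composite $W \subset M \times [-1,1]$ so that its handles sit at four successive ranges of levels, from bottom to top: the $1$-handles $h'^1$ of $W'$, the $2$- and $3$-handles $h'^2,h'^3$ of $W'$, the $1$-handles $h''^1$ of $W''$, and finally $h''^2, h''^3$.

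The crucial observation is that by the separation hypothesis every handle of $W'$ is contained in $M^2 \times I$ (since $W' \cap (M^1 \times I)$ is a product), while every handle of $W''$ is contained in $M^1 \times I$. In particular the $2,3$-handles of $W'$ (second group above) and the $1$-handles of $W''$ (third group) have disjoint supports in $M \times I$. Since disjoint handles can be reordered in time by a standard level-preserving isotopy (compare the Cerf-type moves used throughout section \ref{sec=compression}), I can lower the $h''^1$-handles past the $h'^2, h'^3$-handles without introducing any intersections. After this rearrangement all $1$-handles sit at levels below all $2$- and $3$-handles.

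Next I would identify the lowered attaching data of each $h''^1$-handle with attaching data on $T$. The attaching disks of $h''^1$ originally lie in $S \cap M^1$, and the product structure of $W' \cap (M^1 \times I)$ provides a canonical isotopy $S \cap M^1 \cong T \cap M^1$ in $M^1 \times [-1,1]$; following this isotopy slides the attaching disks down to $T$. At this point $h'^1 \cup h''^1$ may be attached directly to $T \times [-1, -1+\varepsilon]$ to produce a surface at level $0$ that, up to isotopy, is the desired $C$. The remaining $2$- and $3$-handles $h'^2 \cup h'^3 \cup h''^2 \cup h''^3$ then produce $S'$ in the top half, and by construction the resulting manifold is $\widetilde{W}$ with the claimed description as a pullback of $W'$ and $W''$ through $C$.

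The main obstacle will be verifying that all of these intermediate isotopies can be performed in general position simultaneously: in particular, when sliding the $h''^1$-attaching disks through $M^1 \times I$, I must confirm that they remain disjoint from the cores of $h'^1$ and from each other, and that the resulting $1$-handles in $\widetilde{W}$ are genuinely disjointly embedded. This is where the geometric separation hypothesis is used in its strongest form, ensuring that the interior of $W' \cap (M^1 \times I)$ is empty of non-product data and hence poses no obstruction to the slide. A secondary bookkeeping point is maintaining the product structure along $\partial M \times I$ and the corner structure of the embedded bordism throughout the isotopy; this is routine, but must be stated explicitly since all bordisms in $\mathcal{C}(M,\alpha)$ are products along $\partial M \times I$.
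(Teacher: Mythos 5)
Your proposal is correct and is the intended argument: the paper in fact states the Pullback theorem without a written proof (only the intuitive paragraph following it), and the justification it relies on is exactly what you outline, namely that geometric separation places all handles of $W'$ in $M^2\times[-1,1]$ and all handles of $W''$ in $M^1\times[-1,1]$, so the $2$- and $3$-handles $h'^2,h'^3$ and the $1$-handles $h''^1$ have disjoint supports and can be interchanged by the critical-point reordering isotopies of section~\ref{sec=compression}, after which the product structure of $W'\cap(M^1\times[-1,1])$ lets you slide the attaching disks of $h''^1$ down to $T$. One small sharpening of your plan: the reordering can be realized by a vertical ambient isotopy (fixing the $M$-coordinate and supported away from $M^1\cap M^2$ and $\partial M$), so it changes no Morse data except the order of critical levels and is therefore a Heegaard isotopy; this gives slightly more than the stated diffeomorphism, namely the identification of morphisms in $\mathcal{C}(M,\alpha )$ that the application of the pullback principle to the tunneling graph actually requires.
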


Intuitively what the pullback theorem means is that relations defined from two occurences of sequences and inverse sequences of geometrically separated Bar-Natan relations are pullbacks of relations defined from the single occurences of sequences and inverse sequences keeping that part fixed on which the other occurence takes place.  Thus tunneling numbers arising from those bordisms do not appear as edges in the tunneling graph.

\begin{exmp} Let $\alpha =\emptyset$ and $M=\Sigma \times [0,1]$ for $\Sigma =\Sigma_g$ an orientable closed surface of genus $g$. 
Note that this $3$-manifold is Haken-irreducible for all $g\geq 0$. 
The set $\mathfrak{t}$ of incompressible surfaces is parametrized by the number $k\geq 0$ of parallel copies of $\Sigma \times \{1/2\}$ in $\Sigma \times [0,1]$. Call the corresponding isotopy class $S_k$. 
Note that 
if $k =0$ the empty surface is an incompressible surface and is a vertex of $\Gamma _0$, the component of $\Gamma $ defined by incompressible surfaces with $k$ even. The other component is denoted $\Gamma_1$. 

Now for given $k\geq 0$ consider the surface $C=C_k$ of $k+1$ components defined from $S_{k+2}$ by a connected sum of two of its components. There are morphisms $C\to S_k=S_-$ and $C\to S_{k+2}=S_+$ inducing the morphisms
$F_{+/-}: F(C)=V^{\otimes (k+1)}\rightarrow V^{\otimes (k+1\pm 1)}=F(S_{\pm })$, which are given by $F_-=\varepsilon \circ \mathfrak{k}^{2g}$ 
and $F_+=d_1^{i,i+1}$ where we use the notation from section \ref{sec=local}.
\end{exmp}

Recall the definition of the tensor algebra:
$$T(V):=\bigoplus_{k\geq 0}V^{\otimes k}.$$

\begin{thm} (see \cite{AF}) For each genus $g$ orientable closed surface $\Sigma $ we have
$$\mathcal{W}(\Sigma \times [0,1])\cong T(V)/R,$$
where $R$ is the $2$-sided ideal generated by relations $\Delta (v)=\varepsilon \circ \mathfrak{k}^{2g}(v)$ for all $v\in V$, with $\Delta : V\rightarrow V\otimes V$
placing $V\otimes V$ into an arbitrary tensor product $V^{\otimes k}$. $\square$
\end{thm}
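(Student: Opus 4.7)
The approach is to apply the colimit presentation from Theorem \ref{thm=minimal terminal} and Corollary \ref{cor=cases}(ii) to the minimal terminal set of incompressible surfaces, then identify the resulting presentation with the quotient $T(V)/R$.

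First I would verify the setup. Since $M=\Sigma\times[0,1]$ is irreducible for every $g\geq 0$, $(M,\emptyset)$ is Haken-irreducible, so by the results of section \ref{sec=towards} the set $\mathfrak{t}=\{S_k\}_{k\geq 0}$ of isotopy classes of incompressible surfaces is a minimal terminal set in $\mathcal{C}(M)$ (with $S_0=\emptyset$). Each $S_k$ is a disjoint union of $k$ parallel copies of $\Sigma$, so $F(S_k)=V^{\otimes k}$, and
$$\bigoplus_{k\geq 0}F(S_k)=\bigoplus_{k\geq 0}V^{\otimes k}=T(V).$$
By Corollary \ref{cor=cases}(ii), $\mathcal{W}(M)$ is the quotient of $T(V)$ by the submodule spanned by the images of $G(a,a')$ as $\langle a,a'\rangle$ runs through objects of $\mathcal{C}(\mathfrak{t})$, together with loop-isotopy relations at each $S_k$.

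Next I would analyze the tunneling graph $\Gamma(M,\emptyset)$. Order $\mathfrak{t}$ by $S_k\preceq S_\ell$ iff $k\leq\ell$. The loop isotopies at $S_k$ permute the $k$ parallel components and therefore, by functoriality of $F$ on isotopies (tensor-factor permutations), exactly identify $V^{\otimes k}$-orbits under the symmetric group action; this will be absorbed into the two-sided ideal structure of $T(V)/R$. For descending edges, using the example preceding the theorem, a bordism whose restriction has exactly one $1$-handle joining two adjacent components of $S_{k+2}$ (or equivalently an intermediate surface $C_k$ with $k+1$ components obtained by a connected sum of two components of $S_{k+2}$) gives $\mathcal{C}(\mathfrak{t})$-diagrams
$$S_{k+2}\longleftarrow C_k\longrightarrow S_k,$$
with $F_+=d_1^{i,i+1}=\mathrm{Id}^{\otimes i}\otimes\Delta\otimes\mathrm{Id}^{\otimes(k-i)}$ and $F_-=\mathrm{Id}^{\otimes i}\otimes(\varepsilon\circ\mathfrak{k}^{2g})\otimes\mathrm{Id}^{\otimes(k-i)}$; here the factor $\mathfrak{k}^{2g}$ arises because the two glued $\Sigma$-components cobound a product region whose Morse-theoretic handle count yields $2g$ non-separating neck-cuttings. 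The induced relation is precisely $\Delta(v)=(\varepsilon\circ\mathfrak{k}^{2g})(v)$ inserted at position $i$ of a tensor word, which is exactly the two-sided relation generating $R$ once loop permutations and the ability to vary $k$ are taken into account.

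The remaining step is to prove there are no further generators of relations, i.e.\ every other edge of $\Gamma(M,\emptyset)$ is reducible in the sense of the pullback principle. Any bordism $S_\ell\leftarrow C\rightarrow S_k$ can, after cancelling trivial $2$-$3$ handles and ordering the critical points, be decomposed according to which pair of components of the upper surface each $1$-handle joins; this decomposition geometrically separates the bordism into elementary pieces each of which is a product in the complement of a subsurface $M^1\subset M$ in the sense of the definition of geometric separation. The Pullback theorem then shows that the relation from such a compound bordism is a consequence of relations from elementary ones of the form described above, applied at different tensor positions. Hence exactly the ideal $R$ is generated.

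The main obstacle will be the last step: cleanly justifying that every tunneling invariant between $S_k$ and $S_\ell$ reduces, via geometric separation in $\Sigma\times I$, to compositions of the elementary $\Delta\leftrightarrow\varepsilon\circ\mathfrak{k}^{2g}$ relations placed at single positions, and simultaneously showing that the loop-isotopy identifications together with the two-sided nature of $R$ produce precisely the quotient $T(V)/R$ rather than a coinvariants module. This requires analyzing the product structure of an arbitrary Heegaard bordism in $\Sigma\times[0,1]\times I$ between stacks of parallel $\Sigma$'s, and using that any such bordism can be put in a normal form where its $1$- and $2$-handles lie in disjoint vertical slabs $\Sigma\times J\times I$.
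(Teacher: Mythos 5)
Your route --- generators from the minimal terminal set $\{S_k\}$ of stacks of $k$ parallel copies of $\Sigma$, so that $\bigoplus_k F(S_k)=T(V)$, the elementary relation $\Delta(v)=\varepsilon\circ\mathfrak{k}^{2g}(v)$ read off from the bordisms $S_{k+2}\leftarrow C_k\rightarrow S_k$, and the pullback principle to dispose of all other tunneling invariants --- is exactly the route the paper intends; the paper itself only records the preceding example computing $F_{\pm}$ and then cites \cite{AF} without writing out the reduction, so on that point your plan and the paper are at the same level of detail.

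There are, however, two genuine problems. First, your handling of loop isotopies is wrong, and in a way that would contradict the theorem if it were right: parallel copies of $\Sigma$ in $\Sigma\times[0,1]$ cannot be permuted by any self-isotopy, because each copy $\Sigma\times\{t_i\}$ separates and the complementary regions meeting $\Sigma\times\{0\}$ and $\Sigma\times\{1\}$ are preserved, so the linear order of the copies is preserved; the paper gets this from Hatcher's result ($\Sigma\times[0,1]$ is not a surface bundle over $S^1$, so $\mathcal{C}[\mathfrak{t}]$ has no loop isotopies), giving $\colim(F|\mathcal{C}[\mathfrak{t}])\cong\bigoplus_k V^{\otimes k}$ on the nose. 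Had the symmetric group really acted, its identifications would \emph{not} be ``absorbed into the two-sided ideal'': for $V=R[x]/(x^2)$ and $g=0$ the relation $1\otimes x+x\otimes 1\equiv 0$ does not give $1\otimes x\equiv x\otimes 1$, so the symmetrized quotient is strictly smaller than $T(V)/R$. You must prove the absence of permuting self-isotopies, not assert their presence. Second, the step you flag as the ``main obstacle'' --- that every tunneling invariant between stacks is reducible, by geometric separation into vertical slabs and the Pullback theorem, to tensor-insertions of the single elementary relation, and that this elementary relation occurs with tunneling vector zero (the genus-$2g$ connected-sum component bounds the complement of a vertical arc in the product region, a genus-$2g$ handlebody, so $F_-=\varepsilon\circ\mathfrak{k}^{2g}$ with no extra stabilization) --- is precisely the substance of the theorem, and it is not carried out; as written your text is a plan whose decisive argument (done in \cite{AF}) is still missing.
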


\begin{rem} 
In \cite{BK} we develop oriented Bar-Natan theory. Then the tensor product $T(V)$ is replaced by an oriented version 
$$\widetilde{T}(V):=\bigoplus_{k\geq 0}\left(\bigoplus_{\lambda ,|\lambda |=k}V^{\otimes k}\right).$$
with $\lambda $ running through all sequences $(\lambda_1,\ldots ,\lambda_k)$ with $\lambda_i\in \{\pm 1\}$ depending on the orientation of the component, and $|\lambda|$ is the length of the tuple.
\end{rem}

\section{Bar-Natan modules and homology of spaces} \label{sec=homology}

In this section we assume that $V$ is manifold-induced, i.\ e.\ $V=H_*(X)$ for some oriented manifold $X$ of dimension $n$ with 
homology in only even dimensions. Moreover we assume that $\Delta $ is the homomorphism induced by the diagonal 
$X\rightarrow X\times X$ and $\varepsilon $ is induced by $X\rightarrow *$. Then for $n\geq 0$ the homomorphism $d_n$
is induced by the natural diagonal mapping $\delta_n: X\rightarrow X^n$.  

\begin{thm} Suppose that $(M,\alpha )$ is Haken-irreducible and has a tunneling graph $\Gamma _P(M,\alpha )$ for an ordered subset $P\subset \mathfrak{t}$ without loops and with all tunneling vectors the zero vector. Then there exists a natural space $Y$, determined by the tunneling graph, such that
$$\mathcal{W}_P(M,\alpha )\cong H_*(Y).$$ \end{thm}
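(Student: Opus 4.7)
The plan is to build $Y$ as a (homotopy) colimit of a diagram of spaces that geometrically realizes the tunneling graph $\Gamma_P(M,\alpha)$. The key enabling observation is that, by hypothesis, every edge of $\Gamma_P$ has zero tunneling vector, so the handle operator $\mathfrak{k}$ never enters the structure maps produced by the Bar-Natan functor: the pair $F_\pm$ associated to an edge is a tensor product of the morphisms $d_n$ and $\varepsilon $ only. Since $V = H_*(X)$, and $d_n = (\delta_n)_*$, $\varepsilon = (X \to \mathrm{pt})_*$, each such $F_\pm$ lifts canonically to a map of spaces built only out of diagonal maps and point-collapses.

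First, to each vertex $S$ of $\Gamma_P$ I assign the space $X_S := X^{\underline{S}}$; the K\"unneth theorem (valid when $R$ is a field, or more generally under flatness of $V$) gives a canonical isomorphism $H_*(X_S) \cong V^{\otimes \underline{S}} = F(S)$. To each edge $e$ of $\Gamma_P$, represented by a cospan $S \leftarrow C \to S'$ in $\mathcal{C}(M,\alpha)$ with zero tunneling vector, I assign the cospan of spaces $X_S \xleftarrow{f_+^e} X_C \xrightarrow{f_-^e} X_{S'}$ where $f_\pm^e$ is the product of diagonals and point-projections dictated by the combinatorics of the two partitions. I then form $Y$ as the homotopy colimit of the resulting diagram $\mathcal{Y}$ indexed by $\Gamma_P$. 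Since $\Gamma_P$ is a directed acyclic graph by the no-loops hypothesis, this colimit can be constructed iteratively by following the ordering $\preceq$ on $P$: at stage $t \in P$ one glues $X_t$ in by taking the double mapping cylinders of all edges of $\Gamma_P$ terminating at $t$.

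Next, to identify $H_*(Y)$ with $\mathcal{W}_P(M,\alpha)$, I compare both sides to the same algebraic colimit. On the algebraic side, Theorem \ref{thm=cat1} together with the pullback principle of section \ref{sec=presentations} and the analysis in section \ref{sec=to Bar-Natan} identifies $\mathcal{W}_P(M,\alpha)$ with the cokernel of $\bigoplus_e F(C_e) \to \bigoplus_{S \in P} F(S)$ taken over the irreducible edges $e$ of $\Gamma_P$; the no-loop hypothesis guarantees that no further loop isotopy relations need to be imposed. On the topological side, the Mayer--Vietoris / bar spectral sequence for the homotopy colimit has $E^2$-term $\operatorname{Tor}^{R[\Gamma_P]}_*(R, H_*(\mathcal{Y}))$, and its $E^2_{0,*}$-row is exactly this algebraic colimit. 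Matching edge-by-edge using the calculation of $F_\pm$ for zero-tunneling edges identifies the two presentations.

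The main obstacle will be verifying that the bar spectral sequence collapses on the $E^2_{0,*}$-line, i.e.\ that all higher $\operatorname{Tor}$-terms vanish. Heuristically this should follow from the fact that $\Gamma_P$ is a DAG together with the fact that the structure maps are genuine space-level maps (no $\mathfrak{k}$-twisting), but rigorously one needs to replace each $f_\pm^e$ by a cofibration and verify that the iterated double mapping cylinder construction really computes the derived colimit without introducing extra homology. A secondary technical point is coherence of the orderings of tensor factors in the definition of $d_n$: each $F(S) = V^{\otimes \underline{S}}$ depends on such choices only up to the permutation $2$-morphisms of section \ref{sec=bicategory}, and these must be lifted to the space level compatibly across $\mathcal{Y}$. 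The absence of loops in $\Gamma_P$ makes this coherence automatic; a version of the theorem permitting loops would require a more elaborate construction with an $X$-bundle structure over a $1$-type encoding the loop isotopies.
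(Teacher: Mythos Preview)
Your proposal is conceptually on target and shares the paper's core idea: because all tunneling vectors vanish, the morphisms $F_\pm$ are tensor products of $d_n$ and $\varepsilon$ only, and these are induced by genuine maps of spaces (products of diagonals $\delta_n:X\to X^n$ and collapses $X\to\mathrm{pt}$), so the algebraic colimit presenting $\mathcal{W}_P(M,\alpha)$ lifts to a space-level colimit whose homology one then computes.

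The difference is one of packaging. The paper does not invoke a homotopy colimit or the bar spectral sequence. It builds $Y$ inductively along the well-ordering on $P$: set $Y(S_1)=X^{|S_1|}$ for the least element; having built $Y(S)_-=\varinjlim_{S'<S}Y(S')$, form $Y(S)$ as the quotient of $X^{|S|}\amalg Y(S)_-$ identifying $i(u)\sim \iota_{S'S}\circ k(u)$ for each edge $S\leftarrow C\to S'$ and each $u\in X^{|C|}$, where $i,k$ are the space-level lifts of $F_+,F_-$. At each stage one applies ordinary Mayer--Vietoris to the decomposition $Y(S)=X^{|S|}\cup Y(S)_-$ with intersection $X^{|C|}$, and finally uses that homology commutes with direct limits. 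Thus the paper replaces your global spectral-sequence argument by an iterated two-piece Mayer--Vietoris.

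Your stated ``main obstacle'' --- collapse of the bar spectral sequence on the $E^2_{0,*}$-line --- is exactly the same issue as showing, in the paper's inductive step, that the Mayer--Vietoris connecting homomorphism vanishes, i.e.\ that $H_*(X^{|C|})\to H_*(X^{|S|})\oplus H_*(Y(S)_-)$ is injective. Neither your sketch nor the paper's proof spells this out, but it follows from a fact already recorded earlier in the paper: $\varepsilon^n\circ d_n=\mathrm{Id}$, so each diagonal $d_n$ (and hence each $F_+$) is split injective. This is what actually discharges the obstacle you flagged, and it would strengthen your write-up to say so rather than leaving collapse as a heuristic. Your use of double mapping cylinders is, if anything, more careful than the paper's plain quotient; it guarantees that Mayer--Vietoris applies without further point-set justification.
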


\begin{proof} The space $Y$ is constructed inductively using the well-ordering on $P$, i.\ e.\
we construct a sequence of spaces $Y(S)$ with continuous mappings $Y(S)\rightarrow Y(S')$ for $S<S'$. 
Then $Y$ will be the direct limit of this direct system of spaces. 
Let $S_1$ be the smallest element of $P$
and define $Y(S_1):=X^{|S_1|}$. Now suppose that we have constructed the spaces $Y(S')$ for all $S'<S$. Then define $Y(S)_-$ to be the direct limit
of the spaces $Y(S')$ over all $S'<S$, which comes with natural maps $\iota_{S'S}: X^{|S'|}\rightarrow Y(S)_-$ for all $S'<S$. 
Let $\amalg $ denote disjoint union of spaces. 
Now consider the quotient of 
$X^{|S|}\amalg Y(S)_-$ by the equivalence relation defined by $X^{|S|}\ni x\sim y=Y(S)_-$ if and only if there exists an edge from 
$S$ to some $S'<S$ with tunneling invariant homomorphisms $F_{\pm}$ induced from \textit{obvious} mappings $i: X^{|C|}\rightarrow X^{|S|}$
and $k: X^{|C|}\rightarrow X^{|S'|}$ (where the tunneling invariant is induced from some morphism $S\longleftarrow C\longrightarrow S'$),
and $i(u)=x$ and $\iota_{SS'}\circ k(u)=y$ for some $u\in X^{|C|}$. 
This is really the important point. If tunneling vectors are nonzero then the tunneling homomorphisms involve powers of $\mathfrak{k}$, 
which cannot be described by products of embeddings $\delta_n$.   
Let $Y(S)$ denote the resulting space. Then $Y(S)$ is the union of 
two subspaces, which are the images of the natural maps $X(S)\rightarrow Y(S)$ and $Y(S)_-\rightarrow Y(S)$. 
The intersection of the two subspaces is homeomorphic to $X^{|C|}$. So the result follows by the Mayer-Vietoris sequence and the fact that homology commutes with direct limits. \end{proof}

A typical example is $M=S^1\times D^2$ with $\alpha $ the union of $2n$ longitudes with total homology class $0$. 
In \cite{R} Heather Russel constructs for \textit{alternating orientations} of the components of $\alpha $ (which is equivalent to the unoriented case) the space $Y$ explicitly as the homology of the $(n,n)$-Springer variety. The interesting point is of course \textit{why} this particular space from complex representation theory appears at this point. 

\vskip .1in

\noindent \textbf{Problem} Find explicit descriptions of the spaces $Y$ in other cases where $(M,\alpha )$ is Haken-irreducible and tunneling vectors are all $0$. 

\begin{small}

\end{small}

\end{document}